\documentclass[10pt,a4paper]{article}
\usepackage{latexsym}
\usepackage{amsthm,amsmath,amssymb}
\usepackage{fullpage}
\usepackage{float}
\usepackage{amsfonts}
\usepackage{graphicx}
\usepackage{amsmath}
\usepackage{color}
\usepackage{tikz}
\usepackage[colorlinks=true]{hyperref}

\newfloat{figure}{H}{lof}
\newfloat{table}{H}{lot}
\floatname{figure}{\figurename}
\floatname{table}{\tablename}

\newtheorem{theorem}{Theorem}[section]

\newtheorem{lemma}[theorem]{Lemma}

\newtheorem{proposition}[theorem]{Proposition}
\newtheorem{remark}[theorem]{Remark}

\numberwithin{equation}{section}

\newcommand{\R}{\mathbb{R}}
\newcommand{\N}{\mathbb{N}}

\begin{document}

\title{\textbf{On measure-valued solutions for a structured population model with transfers}}
	\author{\textsc{ Pierre Magal$^{{\rm (a)}}$ and   Ga\"el Raoul$^{{\rm (b)}}$}\\
		{\small \textit{$^{{\rm (a)}}$University of Bordeaux, IMB, UMR CNRS 5251, 33076 Bordeaux, France.}} \\
	{\small \textit{$^{{\rm (b)}}$CMAP, CNRS, Ecole polytechnique, Institut Polytechnique de Paris,
91128 Palaiseau, France.
	}} 
}

\maketitle

\begin{abstract} 

We consider a transfer operator where two interacting cells carrying non-negative traits transfer a random fraction of their trait to each other. These transfers can lead to population having  singular distributions in trait. We extend the definition of the transfer operator to non-negative measures with a finite second moment, and we discuss the regularity of the fixed distributions of that transfer operator. Finally, we consider a dynamic transfer model where an initial population distribution is affected by a transfer operator: we prove the existence and uniqueness of mild measure-valued solutions for that Cauchy problem.

\end{abstract}

\date{}

\noindent Keywords: transfers, mild solutions, measure-valued solutions, integro-differential model.

\medskip

\noindent \underline{MSC 2000 subject classification:} 35B40, 35K57, 92D15, 92D25,92D40.

\section{Introduction}
\label{sec:intro}

We consider a population structured by a trait $x\in\R_+$ that is the quantity of a specific good carried by an individual. We are interested in situations where transfers happen. Specifically, when two agents proceed to an exchange, where each individual sends a fraction of its trait to its partner. This type of model has been extensively studied as an econometrics model \cite{Chakraborti,Bisi2}, but they also play an important role in biological contexts. Transfer models are used to represent the effect of horizontal gene transfers \cite{Novozhilov,Billiard} in particular through the exchange of plasmids. They also appear in cancer cell models, where membrane P-glycoproteins (involved in resistance to chemotherapy) can be exchanged between cancer cells \cite{Hinow,Dyson,Pasquier,Pasquier2}. Our goal in this study is to develop a framework for a class of transfer operators acting on measure spaces, and to build an existence setting for measure-valued solutions for a Cauchy problem.

\medskip

In many kinetic models, the existence of solutions can be proven in adapted functional spaces. For instance, in the case of the Boltzmann equation, a priori estimates on the entropy can be used to apply a Dunford-Pettis compactness argument in $L^1$ \cite {Villani-Kinetic}, opening the way to an existence setting in that functional space. In structured population models with sexual reproductions, solutions are bounded thanks to a uniform bound on the reproduction kernel, and solutions can then be constructed in $L^\infty$. Existence and uniqueness setting in $L^1$ have also been proposed for transfer models: in \cite{MW00,M02}, a transfer operator combined to a  mutation operator ensured that solutions are bounded for positive times, and $L^1$ solutions were constructed. In \cite{Hinow}, a  specific transfer model was introduced, where pairs of interacting individuals exchange a fixed fraction of their traits. The authors show that if the initial condition belongs to $L^1(\R_+)$, so does the solution for any finite time. In the case of transfer models, however, limits of solutions belonging to functional spaces quickly appear: in \cite{Hinow}, it was shown that for a simple transfer model, solutions converge to a Dirac mass centered in the mean trait of the initial population. For some transfer kernels, singular distributions in trait can actually emerge for any positive time, even if the initial population is regular. It is then important to develop a well-grounded setting for transfer operators acting on measures, and for measure-valued solution to time-dependent transfer models.

\medskip

To construct measure-valued solutions of structured population or kinetic models, an efficient strategy is to introduce a metric on the set of Radon measures. The most popular metric is the bounded Lipschitz distance, that can be seen as a generalization of the $W_1-$Wasserstein distance to general measures. Equipped with this metric, the set of Radon measures with bounded first moment is complete \cite{Villani}, and regularization arguments can be used to construct solutions. This idea originated in kinetic theory, where it was used to construct solutions of Vlasov's equations \cite{Dobrushin,Neunzert, Sphohn}, and it was then used to construct solutions  for evolutionary biology models \cite{Canizo, Dull,Dull2,Ackleh} as well as swarming models \cite{Rosado,Ha}. An alternative idea is to use Fourier-based metrics, which are also related to Wasserstein distances, as discussed in \cite{Carrillo-rev}. The time-dependent transfer equations can indeed be reformulated (through a Fourier transform) into a simpler integro-differential equation, that can then be used to prove the existence of measure-valued solutions for Maxwellian Boltzmann equations \cite{Pulvirenti,Bobylev1,Bobylev2}. This strategy was developed for some transfer models in the context of econometrics in \cite{Mattes}. In \cite{Griette}, a semi-group approach was proposed to construct solutions measure-valued solutions to a class of transfer models, relying on the completeness of the set of measures equipped with the total variation distance. Finally, in \cite{Tanaka}, H. Tanaka constructed measure-valued solutions of a Maxwellian Boltzmann equation thanks to a Poisson point process. Another great idea in \cite{Tanaka} was to use Wasserstein distances to describe the long-time dynamics of solutions, an idea that was developed for transfer models in econometrics using Fourier-based metrics in e.g. \cite{Mattes,Bassetti2011,Bisi2009}, and through Wasserstein distance in \cite{Magal-Raoul}.

\medskip

In Section~\ref{sec:model}, we define the transfer model we consider in this manuscript. Then, in Section~\ref{sec:defoperator}, we generalize the transfer operator defined in Section~\ref{sec:model} into an operator on non-negative measures with finite second moments moments. We then investigate when the fixed distributions of the transfer operator contain a Dirac mass in Section~\ref{sec:fixed}. Finally, in Section~\ref{sec:time-problem} we construct solutions for a Cauchy problem for the transfer operator, using a tightness and identification strategy. The precise framework we develop provides a solid base for the development and analysis of transfer model for cancer cell populations (see Section~\ref{sec:model}). 

\medskip

Throughout this manuscript, we denote by $\int_0^{+\infty}f(x)\,d\mu(x)$ the integral of the Borel-measurable function $f$ against the Borel measure $\mu$ over $[0,+\infty)$, with $\{0\}$ included in the interval.  If $f\in L^1(\R_+)$, we denote by $f(x)\,dx$ the measure on $\R_+$ absolutely continuous with respect to the Lebesgue measure, with density $f$. we denote by $\mathcal P(X)$ the set of Radon probability measures on a topological space $X$, typically $X=[0,1]$ or $X=\R_+$. $\mathcal P_2(\R_+)$ designates the set of Borelian probability measures over $\R_+$ with finite second moment.
 
\section{The model}\label{sec:model}

\label{subsec:modelling-exchanges}

We consider the transfer model introduced in \cite{Magal-Raoul}, which can be seen as a generalization of the model introduced in \cite{Hinow,Dyson,Pasquier,Pasquier2}, and that describes the transfers of carrying P-glycoproteins in cancer cell populations. We refer in particular to \cite{Pasquier} for experimental results showing the effect of transfers on MCF-7 cancer cells, and for a comparison between the model introduced in \cite{Hinow} and these experimental data. When cells connect through tunneling nanotubes, the proteins are transferred between the two cells along these tunneling nanotubes (see \cite{Pasquier2}). More precisely, we refer to the cell as cell $1$ and cell $2$. If we denote by $x_1\geq 0$ (respectively $x_2\geq 0$) the quantity of protein carried by the cell $1$  (respectively cell $2$) before transfer (i.e. for ancestor cells), and  $x'_1\geq 0$ (respectively $x'_2\geq 0$) the quantity of protein carried by the cell $1$  (respectively cell $2$) after transfer. In the following we call the calls before transfer the  ancestor cells, and the cells after transfer the descendant cells. 
Then we can write 
\begin{equation}\label{2.1}
	\left\{ 
\begin{array}{l}
x_1'=x_1-X_1+X_2, \vspace{0.2cm}\\
x_2'=x_2-X_2+X_1,
\end{array}	
\right. 
\end{equation}
where $X_1 \in [0,x_1]$ is the quantity of protein given by the cell $1$, and  $X_2 \in [0, x_2]$ is the quantity of protein given by the cell $2$. 

\medskip 

In \cite{Hinow}, the fraction of protein sent by each cell was $p \in [0,1]$, a fixed number. Then $X_1=p \, x_1$ and $X_2=  p \, x_2$, and thus
\begin{equation}\label{2.2}
	\left\{ 
	\begin{array}{l}
		x_1'=x_1+ p \,(x_2-x_1) \vspace{0.2cm}\\
		x_2'=x_2+ p \, (x_1-x_2)
	\end{array}	
	\right. 
	\Leftrightarrow 
		\left\{ 
		\begin{array}{l}
			x_1'=(1-p) \, x_1+ p\, x_2 \vspace{0.2cm}\\
			x_2'=(1-p) \, x_2+ p \,x_1 .
		\end{array}	
		\right. 
\end{equation}
In this manuscript, we assume that the fraction of proteins sent is an independent random variable with law $B\in \mathcal P([0,1])$. If $x_1>0$, the law of $X_1$ is then $x_1B\left(\frac {dx}{x_1}\right)$, and we assume in this section that $B$ is absolutely continuous with respect to the Lebesgue measure, with a density that we denote by $\tilde B\in L^1([0,1])$ and that we extend into $\tilde B\in L^1(\R_+)$ thanks to $\tilde B(x)=0$ if $x>1$. The random variable $X_1$ has then a density $x\mapsto x_1\tilde B\left(\frac x{x_1}\right)$ with respect to Lebesgue's measure, and thanks to \eqref{2.1}, we can relate the probability law of $x_1'$ to the laws of $X_1$ and $X_2$. We deduce that the conditional probability that $x_1'\leq x$ (for $x\geq 0$) knowing $x_1>0$  and $x_2>0$ is given by 
\begin{align} 
\mathbb P\left(x_1'\leq x|x_1,\,x_2\right)&=\int_0^{+\infty}\left(\int_0^{x-x_1+w}\,\frac 1{x_2} \tilde B\left(\frac z{x_2}\right)dz\right)\,\frac 1{x_1} \tilde B\left(\frac w{x_1}\right)\,dw.  \label{2.6}
\end{align}
If we derive the right hand side of the above formula with respect to $x$, we obtain the function $K_{\tilde B}[x_1,x_2]\in L^1(\R_+)$ that we define as follows:
\begin{equation} \label{2.7}
K_{\tilde B}[x_1,x_2](x):=\frac 1{x_1x_2}\int_0^{+\infty} \tilde B\left(\frac{x-x_1+w}{x_2}\right)\tilde B\left(\frac w{x_1}\right)\,dw.
\end{equation}
When $x_1>0$ and $x_2>0$, $K_{\tilde B}[x_1,x_2]$ is a well defined function in $L^1(\R_+)$, since \eqref{2.6} can be seen as a convolution. Indeed, if $ \tilde B_1(w):= \tilde B\left(\frac{-w}{x_1}\right)$, $\tilde B_2(w):=\left(\frac{w}{x_2}\right)$, then $\tilde B_1,\,\tilde B_2\in L^1(\R_+)$ and the above formula becomes a convolution in $L^1(\R_+)$:
\begin{equation}\label{eq:KBconvolution}
	K_{\tilde B}[x_1,x_2](x)=\frac 1{x_1x_2}\int_0^{+\infty} \tilde B_1\big((x_1-x)-w\big)\tilde B_2\left(w\right)\,dw.
\end{equation}
We can define the bilinear operator $(u,v)\mapsto T_{\tilde B}[u,v]\in L^1(\R_+)$ where $\tilde u,\tilde v\in C_c(\R_+)$ as follows:
\begin{align}\label{def:T}
&T_{\tilde B}[\tilde u,\tilde v](x):=\int_0^{+\infty}\int_0^{+\infty}  K_{\tilde B}[x_1,x_2](x)\,\tilde u(x_1)\tilde v(x_2)\,dx_1\,dx_2,
\end{align}
that describe the law of the number of proteins carried by a cell uniformly chosen in the structured population of cells described by $\tilde u$, after it engages in a transfer with a cell uniformly selected in the structured population of cells described by $\tilde v$.

\section{Generalization of the transfer operator}\label{sec:defoperator}

In this section, we extend the definition of the measure $K_{\tilde B}[x_1,x_2]\,dx$ to cases where $B=\tilde B(z)\,dz$ is replaced by a general probability measure. We then proceed to extend the transfer operator $T_{\tilde B}[\tilde u,\tilde v]$, for $\tilde u,\tilde v\in L^1(\R_+)$, to situations where $B=\tilde B(z)\,dz$ is replaced by a general probability measure and $\tilde u(x)\,dx$, $\tilde v(x)\,dx$ are replaced by non-negative measures with finite second moments.

\subsection{Definition of the transfer operator when the kernal $B$ is a probability measure}

In this section, we extend the definition of the transfer operator $\mathbb T_B$ to cases where $B$ is a probability measure on $[0,1]$, and we will also extend the domain of the transfer operator from $C_c(\R_+)$ to Borel measures on $\R_+$. 

\medskip

Let $\mathbb M(\mathbb R_+)$ the set of non-negative Borel measures over $\R_+$. We introduce the following sets of non-negative Borel measures over $\R_+$ with finite  second moment:
\begin{equation}\label{def:MM2}
\mathbb M_2(\mathbb R_+)=\left\{u\in\mathbb M(\mathbb R_+);\,u\geq 0\textrm{ and }\int_0^{+\infty}(1+x^2)\,du(x)<+\infty\right\}.
\end{equation}
We can extend the definition of the operator $K_B$ (see \eqref{2.7}) as follows:

\begin{proposition}\label{Prop:barK}
Let $(B,x_1,x_2)\mapsto  K_{B}[x_1,x_2]$ the operator acting on $(L^1([0,1])\cap \mathcal P([0,1]))\times \R_+\times\R_+$ and valued in $\mathcal M_1(\mathbb R_+)$ defined by \eqref{2.7}. If we consider the weak topology of measures on $\mathcal P([0,1])$, there is a unique continuous  operator 
\begin{align}\label{def:barK}
\mathbb K:\mathcal P([0,1])\times \R_+\times\R_+&\to \mathbb M(\mathbb R_+),\\
(B,x_1,x_2)&\mapsto \mathbb K_{B}[x_1,x_2]\nonumber
\end{align}
that is equal to $(B,x_1,x_2)\mapsto  K_{\tilde B}[x_1,x_2](x)\,dx$ on the set of probability measure $B\in\mathcal P([0,1])$ that have a density $\tilde B\in L^1(\R_+)$ with respect to Lebesgue's measure. 

\smallskip

Moreover, for any $(B,x_1,x_2)\in \mathcal P([0,1])\times \R_+\times\R_+$, the distribution $\mathbb K_{B}[x_1,x_2]$ is a probability measure that satisfies
\begin{equation}\label{def:barK2}
\int_0^\infty  \varphi(x)\mathbb K_B[x_1,x_2](x)\,dx=\int_0^1\left(\int_0^1 \varphi(x_1(1-z_1)+x_2z_2)\,dB(z_1)\right)\,dB(z_2),
\end{equation}
for any $\varphi\in C_c(\R_+)$, and the support of $\mathbb K_B[x_1,x_2]$ is included in $[0,x_1+x_2]$.
\end{proposition}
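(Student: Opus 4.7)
The plan is to define $\mathbb K_B[x_1,x_2]$ as the pushforward of the product measure $B\otimes B$ by the continuous affine map $\Phi_{x_1,x_2}:[0,1]^2\to\R_+$, $(z_1,z_2)\mapsto x_1(1-z_1)+x_2z_2$. With this definition, identity \eqref{def:barK2} is immediate from the change-of-variables formula for pushforwards, $\mathbb K_B[x_1,x_2]$ is automatically a probability measure as the pushforward of a probability by a measurable map, and its support lies in $\Phi_{x_1,x_2}([0,1]^2)\subset[0,x_1+x_2]$. All the structural claims of the proposition then follow for free from this reformulation.

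The only thing to verify is that the pushforward coincides with $K_{\tilde B}[x_1,x_2](x)\,dx$ on the absolutely continuous slice. For $B=\tilde B(z)\,dz$ with $\tilde B\in L^1\cap \mathcal P([0,1])$ and $x_1,x_2>0$, I would start from \eqref{2.7}, test against $\varphi\in C_c(\R_+)$, and perform the substitutions $z_1=w/x_1$ and $z_2=(x-x_1(1-z_1))/x_2$; the Jacobian $x_1x_2$ cancels the prefactor, the condition $\tilde B=0$ outside $[0,1]$ restricts the new variables to $[0,1]^2$, and the right-hand side of \eqref{def:barK2} emerges directly. This step is routine.

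For joint continuity in the weak topology, I would take $(B_n,x_1^n,x_2^n)\to(B,x_1,x_2)$ and test against $\varphi\in C_c(\R_+)$. Splitting the difference $\int\varphi\,d\mathbb K_{B_n}[x_1^n,x_2^n]-\int\varphi\,d\mathbb K_B[x_1,x_2]$ into a contribution from the varying map (controlled by uniform continuity of $\varphi$ on a compact interval, using that $\Phi_{x_1^n,x_2^n}\to\Phi_{x_1,x_2}$ uniformly on $[0,1]^2$) and a contribution from the weak convergence $B_n\otimes B_n\to B\otimes B$ (a standard consequence of $B_n\to B$ together with the fact that $B_n,B$ are supported on the compact set $[0,1]$), both pieces vanish as $n\to\infty$. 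Uniqueness of the extension then follows because absolutely continuous probability measures are weakly dense in $\mathcal P([0,1])$ (for instance by convolution with a smooth mollifier and renormalization to $[0,1]$), so any continuous extension of the original operator is forced to agree with $\mathbb K_B[x_1,x_2]$.

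The main subtlety, rather than a genuine obstacle, is that \eqref{2.7} degenerates when $x_1=0$ or $x_2=0$, whereas the pushforward construction covers these boundary points seamlessly and yields joint continuity there; establishing this from \eqref{2.7} alone would be awkward, which is precisely why reformulating the operator through $\Phi_{x_1,x_2}$ is the natural move.
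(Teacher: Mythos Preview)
Your proposal is correct and follows essentially the same approach as the paper. The only difference is presentational: you define $\mathbb K_B[x_1,x_2]$ directly as the pushforward $(\Phi_{x_1,x_2})_\#(B\otimes B)$, whereas the paper defines the linear functional $\varphi\mapsto\int_{[0,1]^2}\varphi\circ\Phi_{x_1,x_2}\,d(B\otimes B)$ and invokes the Riesz--Markov--Kakutani theorem to obtain the representing measure; these constructions are tautologically equivalent, and the continuity argument (split into the varying-map piece controlled by uniform continuity of $\varphi$, and the $B_n\otimes B_n\to B\otimes B$ piece), the change-of-variables identification with $K_{\tilde B}$, and the density-based uniqueness are all carried out identically in both.
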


\begin{proof}[Proof of Proposition~\ref{Prop:barK}]
Let $x_1,x_2\in\R_+$ and $B\in \mathcal P([0,1])$. We may define a function $\lambda_{B,x_1,x_2}$ acting on $C_c(\R_+)$ and valued in $\R$ as follows:
\begin{equation}\label{eq:lambdaK}
\lambda_{B,x_1,x_2}(\varphi):=\int_{[0,1]^2} \varphi(x_1(1-z_1)+x_2z_2)\,d(B\otimes B)(z_1,z_2).
\end{equation}
This function is linear in $\varphi$ and continuous: if the sequence $(\varphi_n)\in (C_c(\mathbb R_+))^{\mathbb N}$ converges to $\varphi_\infty\in C_c(\mathbb R_+)$ for $\|\cdot\|_\infty$, then
\begin{align*}
&\left|\lambda_{B,x_1,x_2}(\varphi_n)-\lambda_{B,x_1,x_2}(\varphi_\infty)\right|\leq  \|\varphi_n-\varphi_\infty\|_\infty\int_0^1\left(\int_0^1dB(z_1)\right)\,dB(z_2)\leq  \|\varphi_n-\varphi_\infty\|_\infty.
\end{align*}
We can then apply the Riesz–Markov–Kakutani representation Theorem to show that there is a unique measure $\mathbb K_B[x_1,x_2]\in \mathbb M(\mathbb R_+)$ such that 
\[\lambda_{B,x_1,x_2}(\varphi)=\int_0^{+\infty}\varphi(x)\,d(\mathbb K_B[x_1,x_2])(x),\]
for any $\varphi\in C_c(\R_+)$. The equality \eqref{eq:lambdaK} implies that $\int_0^{+\infty}\varphi(x)\,d(\mathbb K_B[x_1,x_2])(x)$ is non-negative as soon as $\varphi\in C_c(\R_+)$ is non-negative, which shows that $\mathbb K_B[x_1,x_2]$ is a non-negative measure thanks to Proposition~\ref{prop:appendix} in the Appendix. If $\varphi\in C_c(\R_+)$ a non-negative function such that $\varphi(x)=1$ if $x\in[0,x_1+x_2]$, then
\begin{align*}
\int_0^{+\infty}\varphi(x)\,d\left(\mathbb K_B[x_1,x_2]\right)(x)&=\int_{[0,1]^2} \varphi_l(x_1(1-z_1)+x_2z_2)\,d(B\otimes B)(z_1,z_2)\\
&=\int_{[0,1]^2} \,d(B\otimes B)(z_1,z_2)=1,
\end{align*}
which shows that $\mathbb K_B[x_1,x_2](\R_+)=1$, which, combined to its non-negativity, shows that $\mathbb K_B[x_1,x_2]$ is a probability measure. Moreover, if $\psi\in C_c(\mathbb R_+)$ is equal to $0$ on $[0,x_1+x_2]$, then the equality above implies $\int_0^{+\infty}\psi(x)\,d\left(\mathbb K_B[x_1,x_2]\right)(x)=0$, and since $\mathbb K_B[x_1,x_2]$ is non-negative, this equality implies $\mathbb K_B[x_1,x_2](A)=0$ if $A\subset [0,x_1+x_2]^c$. The support of $\mathbb K_B[x_1,x_2]$ is then included in $[0,x_1+x_2]$.

\medskip

To show that the operator $(B,x_1,x_2)\mapsto \mathbb K_B[x_1,x_2]$ is a continuous operator from $\mathcal P([0,1])\times\R_+\times\R_+$ to $\mathcal P(\R_+)$, let $(B^n,x_1^n,x_2^n)_{n\in\mathbb N}$ a sequence in $\mathcal P([0,1])\times\R_+\times\R_+$ such that $B_n$ converges to $B^\infty$ for the weak topology of measures, $x_1^n\to x_1^\infty$ in $\mathbb R_+$, and $x_2^n\to x_2^\infty$ in $\mathbb R_+$. Then, for $\varphi\in C_c(\R_+)$,
\begin{align}
&\int_0^{+\infty}\varphi(x)\,d(\mathbb K_{B^n}[x_1^n,x_2^n])(x)-\int_0^{+\infty}\varphi(x)\,d(\mathbb K_{B^\infty}[x_1^\infty,x_2^\infty])(x)\nonumber\\
&\quad =\iint_{[0,1]^2} \varphi(x_1^n(1-z_1)+x_2^nz_2)\,d(B^n\otimes B^n)(z_1,z_2)\nonumber\\
&\qquad -\iint_{[0,1]^2} \varphi(x_1^\infty(1-z_1)+x_2^\infty z_2)\,d(B^\infty\otimes B^\infty)(z_1,z_2)\nonumber\\
&\quad =
\iint_{[0,1]^2} \left[\varphi(x_1^n(1-z_1)+x_2^nz_2)-\varphi(x_1^\infty(1-z_1)+x_2^\infty z_2)\right]\,d(B^n\otimes B^n)(z_1,z_2)\nonumber\\
&\qquad + \iint_{[0,1]^2} \varphi(x_1^\infty(1-z_1)+x_2^\infty z_2)\,d(B^n\otimes B^n-B^\infty\otimes B^\infty)(z_1,z_2).\label{eq:cvKB}
\end{align}
Since $\varphi\in C_c(\R_+)$ is uniformly continuous on $[0,x_1+x_2+1]$, 
\[\left\|(z_1,z_2)\mapsto \varphi(x_1^n(1-z_1)+x_2^nz_2)-\varphi(x_1^\infty(1-z_1)+x_2^\infty z_2)\right\|_{L^\infty([0,1]^2)} \underset{n\to\infty}{\longrightarrow}0,\]
which proves that the first term on the right hand side of \eqref{eq:cvKB} converges to $0$ as $n\to\infty$. Moreover, the convergence of $(B^n)_{n\in\mathbb N}$ to $B^\infty$ for the weak topology of measures over $\R_+$ implies the convergence of $(B^n\otimes B^n)_{n\in\mathbb N}$ to $B^\infty\otimes B^\infty$ for the weak topology of measures over $\R_+^2$ (see Theorem~2.8 in \cite{Billingsley}), and the last term in \eqref{eq:cvKB} therefore converges to $0$ as $n\to\infty$. We have then proven
\[\int_0^{+\infty}\varphi(x)\,d(\mathbb K_{B^n}[x_1^n,x_2^n])(x)-\int_0^{+\infty}\varphi(x)\,d(\mathbb K_{B^\infty}[x_1^\infty,x_2^\infty])(x)\xrightarrow[n\to\infty]{}0,\]
which proves the continuity of the operator \eqref{def:barK}.  If  $B\in \mathcal P([0,1])$ is absolutely continuous with respect to Lebesgue's measure, with a density $\tilde B\in L^1([0,1])$, then, thanks to \eqref{eq:lambdaK} and a change of variable, for $x_1,x_2>0$,
\begin{align*}
\int_0^{+\infty} \varphi(x)\,d\big(\mathbb K_B[x_1,x_2]\big)(x)&=\lambda_{B,x_1,x_2}(\varphi)=\int_0^1\left(\int_0^1\varphi(x_1(1-z_1)+x_2z_2)\,dB(z_1)\right)\,dB(z_2)\\
&=\int_0^1\int_0^1\varphi(x_1(1-z_1)+x_2z_2)\tilde B(z_1)\tilde B(z_2)\,dz_1\,dz_2\\
&=\frac 1{x_1x_2}\int_0^{+\infty}\int_0^{+\infty}\varphi(x)\tilde B\left(\frac{x-x_1+w}{x_2}\right)\tilde B\left(\frac{w}{x_1}\right)\,dw\,dx\\
&=\int_0^{+\infty} \varphi(x)K_{\tilde B}[x_1,x_2](x)\,dx,
\end{align*}
thanks to \eqref{2.7}. This calculation shows that $K_{\tilde B}[x_1,x_2](x)\,dx=\mathbb K_B[x_1,x_2]$. If we denote by $ \mathcal P([0,1])\cap L^1([0,1])$ the set of probability measures absolutely continuous with respect to Lebesgue's measure, we notice that $\left(\mathcal P([0,1])\cap L^1([0,1])\right)\times \mathbb R_+\times\mathbb R_+$ density is a dense subset of $\mathcal P([0,1])\times \mathbb R_+\times\mathbb R_+$, where $\mathcal P([0,1])$ is endowed with the weak topology of measures. The application $(B,x_1,x_2)\mapsto \mathbb K_B[x_1,x_2]$ is therefore the only continuous extension of $(B,x_1,x_2)\mapsto K_{\tilde B}[x_1,x_2]$ to $\mathcal P([0,1])\times \mathbb R_+\times\mathbb R_+$.
\end{proof}
We extend the definition of the operator $ T_{\tilde B}[\tilde u,\tilde v]$ in the next proposition:
\begin{proposition}\label{Prop:barT}
Let $(\tilde B,\tilde u,\tilde v)\mapsto T_{\tilde B}[\tilde u,\tilde v]$ the operator acting on $ \{\tilde B\in L^1([0,1]);\,\tilde B\,dz\in \mathcal P([0,1]))\}\times C_c(\mathbb R_+)\times C_c(\mathbb R_+)$ and valued in $ C_c(\mathbb R_+)$ defined by \eqref{def:T}. There is a unique continuous operator 
\begin{align}
 {\mathbb T}:\mathcal P([0,1])\times \mathbb M_2(\mathbb R_+)\times \mathbb M_2(\mathbb R_+)&\to \mathbb M(\mathbb R_+),\label{def:barT}\\
(B,u,v)\quad &\mapsto {\mathbb T}_B[u,v],\nonumber
\end{align}
such that ${\mathbb T}_B[u,v]=T_{\tilde B}[\tilde u,\tilde v](x)\,dx$ when the probability measure $B\in\mathcal P([0,1])$ has a density $\tilde B\in L^1([0,1])$ with respect to Lebesgue's measure, and $u,v\in \mathbb M_2(\R_+)$ have densities $\tilde u,\tilde v\in C_c(\R_+)$ with respect to Lebesgue's measure. ${\mathbb T}_B[u,v]$ is a non-negative measure that satisfies:
\begin{align}
&\int_0^{+\infty} \varphi(x)d({\mathbb T}_B[u,v])(x)\nonumber\\
&\quad =\int_0^{+\infty}\int_0^{+\infty} \left(\int_0^1\left(\int_0^1 \varphi(x_1(1-z_1)+x_2z_2)\,dB(z_1)\right)\,dB(z_2)\right)\,du(x_1)\,dv(x_2),\label{def:barT2}
\end{align}
for any $\varphi\in C_c(\R_+)$, as well as
\begin{align}
\int_0^{+\infty} \varphi(x)d({\mathbb T}_B[u,v])(x)%
&=\int_0^{+\infty}\int_0^{+\infty} \left(\int_0^{+\infty}\varphi(x)\,d\left({\mathbb K}_B[x_1,x_2]\right)(x)\right)\,du(x_1)\,dv(x_2).\label{def:barT4}
\end{align}

\end{proposition}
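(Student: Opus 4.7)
The plan is to follow the same two-step strategy as in the proof of Proposition~\ref{Prop:barK}: first construct $\mathbb T_B[u,v]$ pointwise via the Riesz-Markov-Kakutani representation theorem, then use density of regular triples together with joint continuity of $\mathbb T$ to obtain uniqueness of the continuous extension.

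For the pointwise construction, I fix $(B,u,v) \in \mathcal P([0,1]) \times \mathbb M_2(\R_+) \times \mathbb M_2(\R_+)$ and define, for $\varphi \in C_c(\R_+)$,
$$\Lambda_{B,u,v}(\varphi) := \int_0^{+\infty}\!\int_0^{+\infty}\!\left(\int_0^{+\infty}\varphi(x)\,d\mathbb K_B[x_1,x_2](x)\right)du(x_1)\,dv(x_2).$$
By Proposition~\ref{Prop:barK}, the inner integral is a continuous function of $(x_1,x_2)$ bounded in absolute value by $\|\varphi\|_\infty$ (since $\mathbb K_B[x_1,x_2]$ is a probability measure), and the finiteness of $u(\R_+)v(\R_+)$ built into the definition of $\mathbb M_2(\R_+)$ makes $\Lambda_{B,u,v}$ a well-defined positive linear functional on $C_c(\R_+)$ with $|\Lambda_{B,u,v}(\varphi)| \leq \|\varphi\|_\infty\, u(\R_+) v(\R_+)$. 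Riesz-Markov-Kakutani then yields a unique non-negative Borel measure $\mathbb T_B[u,v] \in \mathbb M(\R_+)$ representing $\Lambda_{B,u,v}$, which is exactly \eqref{def:barT4}; formula \eqref{def:barT2} follows by substituting \eqref{def:barK2} into the inner integral. When $B, u, v$ admit densities as in the statement, combining Fubini with the convolution identity $\int\varphi\,d\mathbb K_B[x_1,x_2] = \int\varphi(x)K_{\tilde B}[x_1,x_2](x)\,dx$ (already verified at the end of the proof of Proposition~\ref{Prop:barK}) and the definition \eqref{def:T} then identifies $\mathbb T_B[u,v]$ with $T_{\tilde B}[\tilde u,\tilde v](x)\,dx$.

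For uniqueness, I endow $\mathcal P([0,1])$ with weak convergence of measures and $\mathbb M_2(\R_+)$ with narrow convergence. Triples $(B,u,v)$ with densities $\tilde B \in L^1([0,1])$ and $\tilde u, \tilde v \in C_c(\R_+)$ are dense in this topology by standard mollification and truncation, with tails controlled by the finite second moment. Joint continuity of $\mathbb T$ is checked by testing against $\varphi \in C_c(\R_+)$: setting $F_n(x_1,x_2) := \int\varphi\,d\mathbb K_{B^n}[x_1,x_2]$ and decomposing
$$\Lambda_{B^n,u^n,v^n}(\varphi) - \Lambda_{B^\infty,u^\infty,v^\infty}(\varphi) = \int(F_n - F_\infty)\,d(u^n \otimes v^n) + \int F_\infty\,d(u^n \otimes v^n - u^\infty \otimes v^\infty),$$
the joint continuity of $\mathbb K$ from Proposition~\ref{Prop:barK} gives uniform convergence $F_n \to F_\infty$ on every compact subset of $\R_+^2$, while narrow convergence $u^n \otimes v^n \to u^\infty \otimes v^\infty$ (a consequence of narrow convergence of each factor) handles both terms after a tightness truncation.

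The main obstacle will be this last continuity step, because the limiting test function $F_\infty$ is bounded continuous on $\R_+^2$ but not compactly supported, so purely weak-$\ast$ convergence of $u^n \otimes v^n$ does not suffice to pass to the limit. The finite-second-moment structure of $\mathbb M_2(\R_+)$ is precisely what rescues the argument: via Markov's inequality on $\int x^2\,du^n$ and $\int x^2\,dv^n$ it supplies the tightness needed to reduce the computation to a large compact square in $\R_+^2$, where the uniform convergence of $F_n$ from Proposition~\ref{Prop:barK} takes over.
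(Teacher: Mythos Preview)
Your proof is correct and follows the same overall architecture as the paper's: construct $\mathbb T_B[u,v]$ pointwise via Riesz--Markov--Kakutani, verify it agrees with $T_{\tilde B}[\tilde u,\tilde v]\,dx$ on regular triples, and deduce uniqueness from continuity plus density. The only real difference is in how the continuity of $(B,u,v)\mapsto\mathbb T_B[u,v]$ is established. The paper works directly with the four-fold product measure $B_n\otimes B_n\otimes u_n\otimes v_n$ on $[0,1]^2\times\R_+^2$ and tests it against the single bounded continuous function $(z_1,z_2,x_1,x_2)\mapsto\varphi(x_1(1-z_1)+x_2z_2)$, so that one application of Theorem~2.8 in \cite{Billingsley} on products of weakly convergent measures finishes the argument in one line. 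You instead pre-integrate over $(z_1,z_2)$ via $\mathbb K_B$ to obtain the functions $F_n$ on $\R_+^2$, and then run a two-term decomposition that combines uniform convergence of $F_n$ on compacts (from the joint continuity in Proposition~\ref{Prop:barK}) with a tightness truncation supplied by the second-moment bound. Your route is slightly longer but has the virtue of making explicit where the $\mathbb M_2$ hypothesis actually enters; the paper's one-line argument is cleaner but leans on narrow convergence of the product without spelling out the tightness that underlies it.
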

We prove this result below, thanks to arguments similar to the ones developed in the proof of Proposition~\ref{Prop:barK}.

\begin{proof}[Proof of Proposition~\ref{Prop:barT}] 
Let $u,v\in C_c(\mathbb R_+)$ and $B\in \mathcal P([0,1])$. We define a function $\tilde \lambda_{B,u,v}$ acting on $C_c(\mathbb R_+)$ and valued in $\mathbb R$ as follows:
\begin{align}\label{eq:tildelambda}
&\tilde \lambda_{B,u,v}(\varphi): =\int_{[0,1]^2\times (\R_+)^2} \varphi\left(x_1 \, \left(1-z_1 \right)+x_2 z_2  \right)\, d\,B\otimes B\otimes u\otimes v( z_1,z_2,x_1,x_2).
\end{align}
This application is linear, and we check that it is continuous: If the sequence $(\varphi_n)\in (C_c(\mathbb R_+))^{\mathbb N}$ converges to $\varphi_\infty\in C_c(\mathbb R_+)$ for $\|\cdot\|_\infty$, then
\begin{align*}
&\left|\tilde\lambda_{B,u,v}(\varphi_n)-\tilde \lambda_{B,u,v}(\varphi_\infty)\right|\leq  \|\varphi_n-\varphi_\infty\|_{L^\infty(\R_+)}\int_{[0,1]^2\times (\R_+)^2}\, d\,B\otimes B\otimes u\otimes v( z_1,z_2,x_1,x_2)\underset{n\to\infty}{\longrightarrow}  0.
\end{align*}
We can then apply the Riesz–Markov–Kakutani representation  Theorem to show that there exists a unique measure ${\mathbb T}_B[u,v]\in \mathbb M(\mathbb R_+)$ such that $\lambda_{B,u,v}(\varphi)=\int_0^{+\infty}\varphi(x)\,d({\mathbb T}_B[u,v])(x)$. If $u,v\geq 0$, the measure ${\mathbb T}_B[u,v]$ is non-negative since $\lambda_{B,u,v}(\varphi)=\int_0^{+\infty}\varphi(x)\,d({\mathbb T}_B[u,v])(x)\geq 0$ for any non-negative test function $\varphi\in C_c(\R_+,\R_+)$, which allows us to apply  Proposition~\ref{prop:appendix} in the Appendix. To show that the operator $(B,u,v)\mapsto\mathbb T_B[u,v]$ is continuous, let $(B_n,u_n,v_n)_{n\in \N}$ a sequence in $\mathcal P([0,1])\times \mathbb M_2(\mathbb R_+)\times \mathbb M_2(\mathbb R_+)$ such that $(B_n)_{n\in\N}$ (resp. $(u_n)_{n\in\N}$, $(v_n)_{n\in\N}$) converges to $B_\infty\in\mathcal P([0,1])$ (resp. $u_\infty,v_\infty\in\mathbb M_2(\R_+)$) for the weak topology of measures. Thanks to  Theorem~2.8 in \cite{Billingsley}, the product measure $(B_n\otimes B_n\otimes u_n\otimes v_n)_{n\in\N}$ converges to $B_\infty\otimes B_\infty\otimes u_\infty\otimes v_\infty$ for the weak topology of measures, and then, for any $\varphi\in C_c(\mathbb R)$,
\begin{align}
&\left|\int_0^{+\infty} \varphi(x)\,d({\mathbb T}_{B_\infty}[u_\infty,v_\infty])(x)-\int_0^{+\infty} \varphi(x)\,d({\mathbb T}_{B_n}[u_n,v_n])(x)\right|=\left|\lambda_{B_\infty,u_\infty,v_\infty}(\varphi)-\lambda_{B_n,u_n,v_n}(\varphi)\right|\nonumber
\\
&\quad \leq \bigg|\int_{(\R_+)^2\times[0,1]^2} \varphi\left(x_1 \, \left(1-z_1 \right)+x_2 z_2  \right)\, d\left(B_\infty\otimes B_\infty\otimes u_\infty\otimes v_\infty\right)( z_1,z_2,x_1,x_2)\nonumber\\
&\qquad -\int_{(\R_+)^2\times[0,1]^2} \varphi\left(x_1 \, \left(1-z_1 \right)+x_2 z_2  \right)\, d\left(B_n\otimes B_n\otimes u_n\otimes v_n\right)( z_1,z_2,x_1,x_2)
\bigg|\underset{n\to\infty}{\longrightarrow}  0.\label{eq:decoup}
\end{align}
Since this convergence holds for any function $\varphi\in C_c(\mathbb R_+)$, we have proven the continuity of the operator $(B,u,v)\in \mathcal P([0,1])\times\mathbb M_2(\mathbb R_+)\times\mathbb M_2(\mathbb R_+)\mapsto {\mathbb T}_B[u,v]\in \mathbb M(\mathbb R_+)$. \eqref{eq:tildelambda} implies \eqref{def:barT2} thanks to a change of variable, while \eqref{def:barT4} follows from Proposition~\ref{Prop:barK}. If $(B,u,v)\in \mathcal P([0,1])\times \mathbb M_2(\R_+)\times \mathbb M_2(\R_+)$, with $B,\,u,\, v$ having densities  $\tilde B\in L^1([0,1])$, $\tilde u\in \times C_c(\mathbb R_+)$, $\tilde v\in C_c(\mathbb R_+)$ with respect to Lebesgue's measure, then \eqref{def:barT4} implies, for any $\varphi\in C_c(\R_+)$,
\begin{align*}
&\int_0^{+\infty} \varphi(x)d({\mathbb T}_B[u,v])(x)%
=\int_0^{+\infty}\int_0^{+\infty} \left(\int_0^{+\infty}\varphi(x){\mathbb K}_B[x_1,x_2](x)\,dx\right)\,du(x_1)\,dv(x_2)\\
&\quad =\int_0^{+\infty}\int_0^{+\infty} \left(\int_0^{+\infty}\varphi(x){K}_{\tilde B}[x_1,x_2](x)\,dx\right)\tilde u(x_1)\tilde v(x_2)\,dx_1\,dx_2=\int_0^{+\infty}\varphi(x){T}_{\tilde B}[\tilde u,\tilde v](x)\,dx,
\end{align*}
thanks to Proposition~\ref{Prop:barK} and the definition \eqref{def:T} of ${T}_{\tilde B}[\tilde u,\tilde v]$. Then ${\mathbb T}_B[u,v]={T}_{\tilde B}[\tilde u,\tilde v](x)\,dx$ for $(B,u,v)\in \mathcal P([0,1])\times \mathbb M_2(\R_+)\times \mathbb M_2(\R_+)$, with $B,\,u,\, v$ having densities  $\tilde B\in L^1([0,1])$, $\tilde u\in \times C_c(\mathbb R_+)$, $\tilde v\in C_c(\mathbb R_+)$ with respect to Lebesgue's measure. Since that set is a dense subset of $\mathcal P([0,1])\times\mathbb M_2(\mathbb R_+)\times\mathbb M_2(\mathbb R_+)$, the operator ${\mathbb T}$ is the unique continuous extension of $T$  to $\mathcal P([0,1])\times\mathbb M_2(\mathbb R_+)\times\mathbb M_2(\mathbb R_+)$.
\end{proof}

\subsection{Properties of the transfer operator $\mathbb T_B$}

We consider the moments of $\mathbb T_B[u,u]$, for $B\in\mathcal P([0,1])$ and $u\in \mathbb M(\R_+)$ in the proposition below. These moments calculations are essential to derive macroscopic models from mesoscopic models including such transfer operators, see \cite{Magal-Raoul}.

\begin{proposition}\label{Prop:barT2}
Let $B\in\mathcal P([0,1])$ and ${\mathbb  T}_B$ the transfer operator defined by \eqref{def:barT}. Then, for $u,v\in \mathbb M_2(\mathbb R_+)$,
\begin{equation}\label{est:moments0}
\int_0^{+\infty}\,d({\mathbb  T}_B[u,v])(x)=\left(\int_0^{+\infty}\,du(x)\right)\int_0^{+\infty}\,dv(x),
\end{equation}
\begin{equation}\label{est:moments1}
\int_0^{+\infty}x\,d({\mathbb  T}_B[u,v])(x)=\left(1-\lambda_{1,B}\right)\left(\int_0^{+\infty}\,dv(x)\right)\int_0^{+\infty} x\,du(x)+\lambda_{1,B}\left(\int_0^1\,du(x)\right)\int_0^{+\infty} x\,dv(x),
\end{equation}
\begin{align}
&\int_0^{+\infty}x^2\,d({\mathbb  T}_B[u,v])(x)=\left(1-2\lambda_{1,B}+\lambda_{2,B}\right)\left(\int_0^{+\infty}\,dv(x)\right)\int_0^{+\infty} x^2\,du(x)\nonumber\\
&\quad +\lambda_{2,B}\left(\int_0^{+\infty}\,du(x)\right)\int_0^{+\infty} x^2\,dv(x)+2\left(1-\lambda_{1,B}\right)\lambda_{1,B}\left(\int_0^{+\infty} x\,du(x)\right)\int_0^{+\infty} x\,dv(x),\label{est:moments2}
\end{align}
where 
\begin{equation}\label{def:lambdai}
\lambda_{1,B}=\int_0^1 x\,dB(x),\quad \lambda_{2,B}=\int_0^1 x^2\,dB(x).
\end{equation}
In particular, if $u,v\in \mathbb M_2(\mathbb R_+)$, then ${\mathbb  T}_B[u,v]\in \mathbb M_2(\mathbb R_+)$.
\end{proposition}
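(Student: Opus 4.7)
The plan is to deduce \eqref{est:moments0}, \eqref{est:moments1} and \eqref{est:moments2} by substituting the test functions $\varphi(x)=1$, $\varphi(x)=x$ and $\varphi(x)=x^2$ into the representation formulas \eqref{def:barT2} and \eqref{def:barT4}. Since none of these $\varphi$ belong to $C_c(\R_+)$, the first (and only genuinely analytic) step is to justify that these formulas extend to polynomial integrands up to degree two; once this is done, the three moment identities follow by a direct expansion and Fubini.

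To perform the extension, I would fix a sequence $(\chi_n)_{n\in\N}\subset C_c(\R_+)$ with $0\leq\chi_n\leq 1$, $\chi_n\equiv 1$ on $[0,n]$ and $\chi_n(x)\uparrow 1$ for every $x\geq 0$, and set $\varphi_n^{(k)}(x):=x^k\chi_n(x)\in C_c(\R_+)$ for $k\in\{0,1,2\}$. Applying \eqref{def:barT4} to $\varphi_n^{(k)}$ and using that, by Proposition~\ref{Prop:barK}, $\mathbb K_B[x_1,x_2]$ is a probability measure supported in $[0,x_1+x_2]$, I would bound the inner integral by
\[\int_0^{+\infty}\varphi_n^{(k)}(x)\,d\big(\mathbb K_B[x_1,x_2]\big)(x)\leq (x_1+x_2)^k,\]
which is integrable against $du(x_1)\,dv(x_2)$ because $u,v\in\mathbb M_2(\R_+)$ controls the moments of $u$ and $v$ up to order two. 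Monotone convergence then lets me send $n\to\infty$ on both sides and conclude that \eqref{def:barT2} and \eqref{def:barT4} remain valid for $\varphi(x)=x^k$, $k\in\{0,1,2\}$, with all integrals finite.

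The remaining computations are bookkeeping on the binomial expansion. For $k=0$, the inner double $B$-integral equals $1$, yielding \eqref{est:moments0}. For $k=1$, linearity and \eqref{def:lambdai} reduce the inner integral to $x_1(1-\lambda_{1,B})+x_2\lambda_{1,B}$, and integration against $du(x_1)\,dv(x_2)$ produces \eqref{est:moments1}. For $k=2$, expanding $(x_1(1-z_1)+x_2z_2)^2$ and using that $B\otimes B$ factors gives
\[\int_0^1\int_0^1(x_1(1-z_1)+x_2z_2)^2\,dB(z_1)\,dB(z_2)=x_1^2(1-2\lambda_{1,B}+\lambda_{2,B})+2x_1x_2(1-\lambda_{1,B})\lambda_{1,B}+x_2^2\lambda_{2,B},\]
and integrating against $du(x_1)\,dv(x_2)$ yields \eqref{est:moments2}. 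The closing assertion $\mathbb T_B[u,v]\in\mathbb M_2(\R_+)$ is then immediate from \eqref{est:moments0} and \eqref{est:moments2}, both of which are finite under the standing assumption $u,v\in\mathbb M_2(\R_+)$. The only place where any care is required is the extension step above; everything else is purely algebraic.
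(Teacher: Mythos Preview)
Your proposal is correct and follows essentially the same strategy as the paper: extend the representation formula \eqref{def:barT2}--\eqref{def:barT4} from $C_c(\R_+)$ to the polynomials $1,x,x^2$ via a monotone-convergence argument, then read off the moment identities from the binomial expansion of $(x_1(1-z_1)+x_2z_2)^k$. The only cosmetic difference is that the paper truncates the measures (replacing $u,v$ by $u\mathbf 1_{[0,R]},v\mathbf 1_{[0,R]}$, which forces $\mathbb T_B[u,v]$ to be supported in $[0,2R]$ and lets a single $C_c$ cutoff handle all three moments at once) and then sends $R\to\infty$, whereas you truncate the test functions directly with $\chi_n$; both routes invoke monotone convergence in the same way and lead to the identical algebra.
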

\begin{remark}\label{rem:momentsKB}
For $x_1,x_2\geq 0$, equality \eqref{def:barT4} implies
\[\mathbb T_B[\delta_{x_1},\delta_{x_2}]=\mathbb K_B[x_1,x_2],\]
where $\mathbb K_B[x_1,x_2]$ is a probability measure thanks to Proposition~\ref{Prop:barK}. Proposition~\ref{Prop:barT2} then implies the following properties on $\mathbb K_B[x_1,x_2]$:
\[ \int_0^{+\infty}x\,d\left(\mathbb K_B[x_1,x_2]\right)(x)=(1-\lambda_{1,B})x_1+\lambda_{1,B}x_2,\]
\[\int_0^{+\infty}x^2\,d\left(\mathbb K_B[x_1,x_2]\right)(x)=(1-2\lambda_{1,B}+\lambda_{2,B})x_1^2+\lambda_{2,B}x_2^2+2(1-\lambda_{1,B})\lambda_{1,B}x_1x_2.\]
Note also that
\[\int_0^{+\infty}x\,d\left(\mathbb T_B[u,u]\right)(x)=\int_0^{+\infty}x\,du(x),\quad \int_0^{+\infty} x\,d\left(\frac{\mathbb K_B[x_1,x_2]+\mathbb K_B[x_2,x_1]}2\right)(x)=\frac {x_1+x_2}2,\]
which reflects the fact that the transfers conserve the total number of proteins, as described by \eqref{2.1}.
\end{remark}

\begin{proof}[Proof of Proposition~\ref{Prop:barT2}]
\medskip

\noindent\textbf{Step 1: Case where $u,v\in\mathbb M_2(\R_+)$ are compactly supported}

Let $R>0$ and $u,v\in\mathbb M_2(\R_+)$ such that  $u(x)=v(x)=0$ for $x\geq R$. 

\medskip

Let $\phi\in C_c(\R_+)$ satisfying $\phi(x)=0$ for $x\leq 2R$. Then $u(x)=v(x)=0$ for $x\geq R$, and thanks to \eqref{def:barT2},
\begin{align*}
&\int_0^{+\infty} \phi(x){\mathbb T}_{B}[u,v](x)\,dx=\int_0^{+\infty}\int_0^{+\infty}\left(\int_0^{+\infty}\phi(x)\,d\left(\mathbb K_B[x_1,x_2]\right)(x)\right)\,du(x_1)\,dv(x_2) =0,
\end{align*}
since the support of $\mathbb K_B[x_1,x_2]$ is included in $[0,2R]$ for $x_1\in\textrm{supp }u$ and $x_2\in\textrm{supp }v$, thanks to Proposition~\ref{Prop:barK}.  The support of ${\mathbb T}_{B}[u,v]$ is then included in $[0,2R]$.

\medskip

Let $\varphi\in C_c(\mathbb R_+,\mathbb R_+)$ with $\varphi(x)=1$ for $x\leq 2R$. Then, for any $x\geq 0$ such that $\varphi(x)\neq 1$, we have $u(x)=v(x)= {\mathbb T}_{B}[u,v](x)=0$. For $\alpha,\beta,\gamma\geq 0$,
\begin{align*}
&\int_0^{+\infty}\left(\alpha+\beta x+\gamma x^2\right)\,d({\mathbb  T}_B[u,v])(x)=\int_0^{+\infty}\left(\alpha+\beta x+\gamma x^2\right)\varphi(x)\,d({\mathbb  T}_B[u,v])(x)\\
&\quad =\int_0^{+\infty}\int_0^{+\infty} \bigg(\int_0^1\bigg(\int_0^1 \left(\alpha+\beta (x_1(1-z_1)+x_2z_2)+\gamma (x_1(1-z_1)+x_2z_2)^2\right)\\
&\phantom{\quad =\int_0^{+\infty}\int_0^{+\infty} \bigg(\int_0^1\bigg(\int_0^1}\varphi(x_1(1-z_1)+x_2z_2)\,dB(z_1)\bigg)\,dB(z_2)\bigg)\,du(x_1)\,dv(x_2)\\
&\quad =\int_0^{+\infty}\int_0^{+\infty} \bigg(\int_0^1\bigg(\int_0^1 \left(\alpha+\beta (x_1(1-z_1)+x_2z_2)+\gamma (x_1(1-z_1)+x_2z_2)^2\right)\\
&\phantom{\quad =\int_0^{+\infty}\int_0^{+\infty} \bigg(\int_0^1\bigg(\int_0^1}\,dB(z_1)\bigg)\,dB(z_2)\bigg)\,du(x_1)\,dv(x_2),
\end{align*}
where we used Proposition~\ref{Prop:barT}. We compute :
\begin{align*}
&\int_0^1\bigg(\int_0^1 \left(\alpha+\beta (x_1(1-z_1)+x_2z_2)+\gamma (x_1(1-z_1)+x_2z_2)^2\right)\,dB(z_1)\bigg)\,dB(z_2)\\
&\quad =\alpha + \beta\left[x_1\left(1-\int_0^1 zdB(z)\right)+x_2\int_0^1 zdB(z)\right]+\gamma\bigg[x_1^2\left(1-2\int_0^1z\,dB(z)+\int_0^1 z^2\,dB(z)\right)\\
&\qquad  +x_2^2\int_0^1z^2\,dB(z)+2x_1x_2\left(1-\int_0^1z\,dB(z)\right)\int_0^1z\,dB(z)\bigg]\\
&\quad =\alpha + \beta\left[x_1\left(1-\lambda_{1,B}\right)+x_2\lambda_{1,B}\right]+ \gamma\left[x_1^2\left(1-2\lambda_{1,B}+\lambda_{2,B}\right)+x_2^2\lambda_{2,B}+2x_1x_2\left(1-\lambda_{1,B}\right)\lambda_{1,B}\right].
\end{align*}
Then,
\begin{align*}
&\int_0^{+\infty}\left(\alpha+\beta x+\gamma x^2\right)\,d({\mathbb  T}_B[u,v])(x)=\alpha \left(\int_0^{+\infty}\,du(x)\right)\int_0^{+\infty}\,dv(x)\\
&\quad + \beta\left[\left(1-\lambda_{1,B}\right)\left(\int_0^{+\infty}\,dv(x)\right)\int_0^{+\infty} x\,du(x)+\lambda_{1,B}\left(\int_0^{+\infty}\,du(x)\right)\int_0^{+\infty} x\,dv(x)\right]\\
&\quad +\gamma\bigg[\left(1-2\lambda_{1,B}+\lambda_{2,B}\right)\left(\int_0^{+\infty}\,dv(x)\right)\int_0^{+\infty} x^2\,du(x)+\lambda_{2,B}\left(\int_0^{+\infty}\,du(x)\right)\int_0^{+\infty} x^2\,dv(x)\\
&\phantom{\quad +\gamma\bigg[a}+2\left(1-\lambda_{1,B}\right)\lambda_{1,B}\left(\int_0^{+\infty} x\,du(x)\right)\int_0^{+\infty} x\,dv(x)\bigg].
\end{align*}

\medskip\textbf{Step 2: General case $u,v\in\mathbb M_2(\R_+)$}

For $u,v\in\mathbb M_2(\R_+)$, we notice that the applications $R\mapsto u(x) 1_{x\leq R}$, $R\mapsto v(x) 1_{x\leq R}$ and $R\mapsto {\mathbb T}_B[u1_{\,\cdot\,\leq R},v1_{\,\cdot\,\leq R}](x)$ are non-decreasing in $R$. We may then use the monotone convergence theorem to show, for $\alpha,\beta,\gamma\geq 0$,
\begin{align*}
&\int_0^{+\infty}\left(\alpha+\beta x+\gamma x^2\right)\,d({\mathbb  T}_B[u,v])(x)=\lim_{R\to\infty}\int_0^{+\infty}\left(\alpha+\beta x+\gamma x^2\right)\,d({\mathbb  T}_B[u1_{\,\cdot\,\leq R},v1_{\,\cdot\,\leq R}])(x)\\
&\quad =\lim_{R\to\infty}\bigg\{\alpha \left(\int_0^R\,du(x)\right)\int_0^R\,dv(x)\\
&\qquad + \beta\left[\left(1-\lambda_{1,B}\right)\left(\int_0^R\,dv(x)\right)\int_0^{R} x\,du(x)+\lambda_{1,B}\left(\int_0^R\,du(x)\right)\int_0^{R} x\,dv(x)\right]\\
&\qquad +\gamma\bigg[\left(1-2\lambda_{1,B}+\lambda_{2,B}\right)\left(\int_0^R\,dv(x)\right)\int_0^{R} x^2\,du(x)+\lambda_{2,B}\left(\int_0^R\,du(x)\right)\int_0^{R} x^2\,dv(x)\\
&\phantom{\quad +\gamma\bigg[a}+2\left(1-\lambda_{1,B}\right)\lambda_{1,B}\left(\int_0^{R} x\,du(x)\right)\int_0^{R} x\,dv(x)\bigg]\bigg\}\\
&\quad =\alpha \left(\int_0^{+\infty}\,du(x)\right)\int_0^{+\infty}\,dv(x)\\
&\qquad + \beta\left[\left(1-\lambda_{1,B}\right)\left(\int_0^{+\infty}\,dv(x)\right)\int_0^{+\infty} x\,du(x)+\lambda_{1,B}\left(\int_0^{+\infty}\,du(x)\right)\int_0^{+\infty} x\,dv(x)\right]\\
&\qquad +\gamma\bigg[\left(1-2\lambda_{1,B}+\lambda_{2,B}\right)\left(\int_0^{+\infty}\,dv(x)\right)\int_0^{+\infty} x^2\,du(x)+\lambda_{2,B}\left(\int_0^{+\infty}\,du(x)\right)\int_0^{+\infty} x^2\,dv(x)\\
&\phantom{\quad +\gamma\bigg[a}+2\left(1-\lambda_{1,B}\right)\lambda_{1,B}\left(\int_0^{+\infty} x\,du(x)\right)\int_0^{+\infty} x\,dv(x)\bigg].
\end{align*}

\end{proof}

\section{Properties of fixed distributions for the transfer operator}
\label{sec:fixed}

In this section, we focus our attention on fixed distributions for $u\mapsto\mathbb T_B[u,u]$, $u\not\equiv 0$,  that satisfy
\begin{equation}\label{4.1}
 \overline{u}=	 {\mathbb T}_B[ \overline{u},\overline{u}],
\end{equation}
as an equality between measures. An integration along $\R_+$ implies $\int_0^{+\infty} \,d\bar u(x)=\left(\int_0^{+\infty} \,d\bar u(x)\right)^2$, so that we can restrict our attention to probability measures, that is $\bar u\in\mathcal P_2(\R_+)$.

\subsection{Concentration at $x=0$}
\label{subsec:x0}

We notice that for any kernel $B\in \mathcal P(\mathbb R_+)$, the Dirac mass at the origin, ie $\bar u:=\delta_0$, is a fixed distribution for the transfer operator. Indeed, if $B\in \mathcal P(\mathbb R_+)$ and  $\varphi\in C_c(\mathbb R_+)$, then,
\begin{align*}
&\int_0^{+\infty} \varphi(x){\mathbb T}_B[\delta_0,\delta_0](x)\,dx\nonumber\\
&\quad =\int_0^{+\infty}\int_0^{+\infty} \left(\int_0^1\left(\int_0^1 \varphi(x_1(1-z_1)+x_2z_2)\,dB(z_1)\right)\,dB(z_2)\right)\,d(\delta_0)(x_1)\,d(\delta_0)(x_2)=\varphi(0),
\end{align*}
which implies ${\mathbb T}_B[\delta_0,\delta_0]=\delta_0$. Are there other fixed distributions for $u\mapsto \mathbb T_B[u,u]$ that contain a Dirac mass at $x=0$? We show in the next proposition that such fixed distributions exist when the transfer kernel $B\in \mathcal P(\mathbb R_+)$ contains Dirac masses in both $x=0$ and $x=1$:
\begin{proposition}\label{prop:delta01}
If $B\in\mathcal P([0,1])$ satisfies $B\geq \alpha(\delta_0+\delta_1)$ for some $\alpha>0$ and $\bar u\in \mathcal P_2(\mathbb R_+)$ satisfies ${\mathbb T}_B[\bar u,\bar u]=\bar u$, then $\bar u\geq \alpha^2 \delta_0$.
\end{proposition}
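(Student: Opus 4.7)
The goal is to show $\bar u(\{0\})\geq \alpha^2$, which is equivalent to $\bar u\geq \alpha^2\delta_0$ as non-negative measures, since for any Borel set $A$ containing $0$ we would have $\bar u(A)\geq \bar u(\{0\})\geq \alpha^2=\alpha^2\delta_0(A)$, while for $A\not\ni 0$ the inequality is trivial.

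The plan is to test the fixed-point identity against a sequence of continuous compactly supported functions approximating $1_{\{0\}}$. Concretely, I will take $\varphi_n(x):=\max(1-nx,0)\in C_c(\R_+)$, which satisfies $0\leq\varphi_n\leq 1$, $\varphi_n(0)=1$, and $\varphi_n(x)\to 1_{\{0\}}(x)$ pointwise as $n\to\infty$. Applying the formula \eqref{def:barT2} to the fixed-point equation $\bar u={\mathbb T}_B[\bar u,\bar u]$ gives
\begin{equation*}
\int_0^{+\infty}\varphi_n(x)\,d\bar u(x)=\int_0^{+\infty}\!\!\int_0^{+\infty}\!\!\int_0^1\!\!\int_0^1 \varphi_n\bigl(x_1(1-z_1)+x_2z_2\bigr)\,dB(z_1)\,dB(z_2)\,d\bar u(x_1)\,d\bar u(x_2).
\end{equation*}

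Next I would exploit the hypothesis $B\geq\alpha(\delta_0+\delta_1)$ by writing $B=\alpha\delta_0+\alpha\delta_1+\nu$ with $\nu\in\mathbb M(\R_+)$ non-negative. Expanding the product $B\otimes B$ and keeping only the $(\alpha\delta_1)\otimes(\alpha\delta_0)$ term (all other terms contribute non-negatively since $\varphi_n\geq 0$), the inner double integral is bounded below by
\begin{equation*}
\alpha^2\,\varphi_n\bigl(x_1(1-1)+x_2\cdot 0\bigr)=\alpha^2\,\varphi_n(0)=\alpha^2,
\end{equation*}
uniformly in $x_1,x_2\geq 0$. Integrating against $d\bar u(x_1)\,d\bar u(x_2)$ and using that $\bar u$ is a probability measure yields $\int \varphi_n\,d\bar u\geq \alpha^2$.

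Finally I would pass to the limit $n\to\infty$: the sequence $\varphi_n$ is dominated by the constant $1$, which is $\bar u$-integrable, and converges pointwise to $1_{\{0\}}$, so dominated convergence gives $\int\varphi_n\,d\bar u\to\bar u(\{0\})$, whence $\bar u(\{0\})\geq\alpha^2$ and the conclusion follows. No step here is really an obstacle; the only point requiring a bit of care is making sure the lower bound of the inner integral is obtained with an appropriate test function that does not vanish at $0$, which is exactly why the tent functions $\varphi_n$ are chosen with $\varphi_n(0)=1$.
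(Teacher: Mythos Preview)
Your proof is correct and follows essentially the same route as the paper: apply formula \eqref{def:barT2} to the fixed-point identity, use $B\otimes B\geq \alpha^2(\delta_1\otimes\delta_0)$ on the inner integral (since the integrand is non-negative), and observe that the choice $z_1=1$, $z_2=0$ collapses $x_1(1-z_1)+x_2z_2$ to $0$. The only cosmetic difference is that the paper carries out the computation for an arbitrary non-negative $\varphi\in C_c(\R_+)$, obtaining $\int\varphi\,d\bar u\geq \alpha^2\varphi(0)=\alpha^2\int\varphi\,d\delta_0$ directly, so no approximating sequence or dominated-convergence step is needed; your tent functions $\varphi_n$ and the passage to the limit are a slight detour but entirely valid.
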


\begin{proof}[Proof of Proposition~\ref{prop:delta01}] For $\varphi\in C_c(\mathbb R_+)$,
\begin{align*}
&\int_0^{+\infty} \varphi(x){\mathbb T}_B[\bar u,\bar u](x)\,dx\nonumber\\
&\quad =\int_0^{+\infty}\int_0^{+\infty} \left(\int_0^1\left(\int_0^1 \varphi(x_1(1-z_1)+x_2z_2)\,dB(z_1)\right)\,dB(z_2)\right)\,d\bar u(x_1)\,d\bar u(x_2)\\
&\quad \geq \alpha^2\int_0^{+\infty}\int_0^{+\infty} \left(\int_0^1\left(\int_0^1 \varphi(x_1(1-z_1)+x_2z_2)\,d(\delta_1)(z_1)\right)\,d(\delta_0)(z_2)\right)\,d\bar u(x_1)\,d\bar u(x_2)\\
&\quad = \alpha^2\int_0^{+\infty}\int_0^{+\infty}\varphi(0)\,d\bar u(x_1)\,d\bar u(x_2)=\alpha^2\varphi(0),
\end{align*}
and then $\bar u=\bar{\mathbb T}_B[\bar u,\bar u]$ implies $\bar u\geq \alpha^2\delta_0$.
\end{proof}
Surprisingly, we show that as soon as $B$ does not contain a Dirac mass at both extremities of the interval $[0,1]$ (and $B\neq \delta_0$, $B\neq \delta_1$), the only fixed distribution for the transfer operator containing mass at $x=0$ is $\delta_0$:

\begin{proposition}\label{prop:delta02}
Let $B\in\mathcal P([0,1])$, such that $B\neq \delta_0$ and $B\neq \delta_1$. We assume that $B$ either satisfies:
\[B(\{0\})=0\quad \textrm{ or }\quad B(\{1\})=0.\]
 If $\bar u\in \mathcal P_2(\mathbb R_+^*)$ satisfies ${\mathbb T}_B[\bar u,\bar u]=\bar u$, then either $\bar u=\delta_0$ or $\bar u(\{0\})=0$. 
\end{proposition}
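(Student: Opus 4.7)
The plan is to extract an algebraic equation for the atomic mass $c := \bar u(\{0\}) \in [0,1]$, and show that under the dichotomy $B(\{0\}) = 0$ or $B(\{1\}) = 0$, this equation forces either $c = 0$ (the conclusion) or $c = 1$ (so $\bar u = \delta_0$) or $B \in \{\delta_0, \delta_1\}$ (contradicting the hypothesis).

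The key ingredient is a formula for $\mathbb{K}_B[x_1,x_2](\{0\})$. Proposition~\ref{Prop:barK}, specifically \eqref{def:barK2}, identifies $\mathbb{K}_B[x_1,x_2]$ as the pushforward of $B\otimes B$ under the map $\Psi_{x_1,x_2}(z_1,z_2) := x_1(1-z_1) + x_2 z_2$. Since all quantities involved are non-negative, $\Psi_{x_1,x_2} = 0$ holds precisely when $x_1(1-z_1) = 0$ and $x_2 z_2 = 0$ simultaneously, and a four-case analysis yields
\begin{equation*}
\mathbb{K}_B[x_1,x_2](\{0\}) = \begin{cases} 1 & \text{if } x_1 = x_2 = 0, \\ B(\{0\}) & \text{if } x_1 = 0 < x_2, \\ B(\{1\}) & \text{if } x_2 = 0 < x_1, \\ B(\{0\}) B(\{1\}) & \text{if } x_1, x_2 > 0. \end{cases}
\end{equation*}
To pass from test functions in $C_c(\R_+)$ to the indicator $\mathbf{1}_{\{0\}}$, I would apply \eqref{def:barT4} to the continuous cutoffs $\varphi_n(x) := \max(1 - nx, 0)$, which decrease pointwise to $\mathbf{1}_{\{0\}}$ with $0 \leq \varphi_n \leq 1$, and invoke dominated convergence on both sides to obtain
\begin{equation*}
\mathbb{T}_B[\bar u, \bar u](\{0\}) = \int_{\R_+}\int_{\R_+} \mathbb{K}_B[x_1, x_2](\{0\}) \, d\bar u(x_1) \, d\bar u(x_2).
\end{equation*}

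Decomposing $\bar u \otimes \bar u$ over the four regions $\{0\}^2$, $\{0\} \times \R_+^*$, $\R_+^* \times \{0\}$, $(\R_+^*)^2$, and substituting the values above, the fixed-point identity $\bar u = \mathbb{T}_B[\bar u, \bar u]$ evaluated on $\{0\}$ becomes
\begin{equation*}
c = c^2 + c(1-c)\bigl[B(\{0\}) + B(\{1\})\bigr] + (1-c)^2 B(\{0\}) B(\{1\}).
\end{equation*}
Now assume without loss of generality $B(\{0\}) = 0$ (the case $B(\{1\}) = 0$ being symmetric by swapping the roles of $z_1,z_2$). The equation collapses to $c = c^2 + c(1-c) B(\{1\})$; if $c > 0$ I divide by $c$ to get $(1-c)(1 - B(\{1\})) = 0$, hence either $c = 1$ (and then $\bar u = \delta_0$ since $\bar u$ is a probability measure) or $B(\{1\}) = 1$, i.e.\ $B = \delta_1$, contradicting the hypothesis. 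Thus $c = 0$ whenever $\bar u \neq \delta_0$, which is the claim.

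The main obstacle is really contained in the first two steps: carefully identifying $\mathbb{K}_B[x_1,x_2](\{0\})$ in each configuration and justifying the extension of the integral identity from $C_c(\R_+)$ to $\mathbf{1}_{\{0\}}$. Once the resulting algebraic relation in $c$ is written down, the conclusion follows by a line of arithmetic, and the asymmetric hypothesis $B(\{0\}) = 0$ or $B(\{1\}) = 0$ is exactly what is needed to kill the cross term $(1-c)^2 B(\{0\}) B(\{1\})$ that would otherwise admit nontrivial solutions (as in Proposition~\ref{prop:delta01}).
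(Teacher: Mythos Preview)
Your proof is correct and is genuinely different from the paper's argument. The paper proceeds by contradiction with a two-scale limiting argument: assuming $\eta:=\bar u(\{0\})\in(0,1)$, it tests $\mathbb T_B[\bar u,\bar u]$ against cutoffs $\varphi^\varepsilon$ supported in $[0,\varepsilon]$, splits the $(x_1,x_2)$-integral according to whether $x_1,x_2\lessgtr\bar x$, and uses outer regularity of $B$ and $\bar u$ to bound the resulting pieces, eventually forcing $\eta<\eta$. Your route is more direct: you recognize $\mathbb K_B[x_1,x_2]$ as the pushforward of $B\otimes B$ under $(z_1,z_2)\mapsto x_1(1-z_1)+x_2z_2$, compute the atom $\mathbb K_B[x_1,x_2](\{0\})$ exactly in the four configurations, and pass to $\mathbf 1_{\{0\}}$ by monotone/dominated convergence to obtain the closed equation
\[
c=c^2+c(1-c)\bigl[B(\{0\})+B(\{1\})\bigr]+(1-c)^2 B(\{0\})B(\{1\}),
\]
which under either branch of the hypothesis factors as $c(1-c)\bigl(1-B(\{j\})\bigr)=0$ with $j\in\{0,1\}$.

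What each approach buys: your argument is shorter, avoids the interplay of two small parameters, and makes completely transparent \emph{why} the one-sided hypothesis $B(\{0\})=0$ or $B(\{1\})=0$ is the right one---it is precisely what annihilates the cross term $(1-c)^2B(\{0\})B(\{1\})$ that produces the nontrivial atoms in Proposition~\ref{prop:delta01}. The paper's estimate-based approach, while heavier here, has the virtue of being more robust to perturbations of the setting (e.g.\ if one wanted to control mass on a small interval rather than a single point, or if the exact pushforward structure were unavailable).
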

\begin{remark}
Notice that if $B=\delta_0$ or $B=\delta_1$, then the transfer operator reduces to the identity, as we can check from \eqref{def:barT2}: if $B=\delta_0$,
\begin{align*}
&\int_0^{+\infty} \varphi(x)\,d({\mathbb T}_B[u,u])(x)\nonumber\\
&\quad =\int_0^{+\infty}\int_0^{+\infty} \left(\int_0^1\left(\int_0^1 \varphi(x_1(1-z_1)+x_2z_2)\,d(\delta_0)(z_1)\right)\,d(\delta_0)(z_2)\right)\,du(x_1)\,du(x_2)\\
&\quad =\int_0^{+\infty}\varphi(x_1)\,du(x_1),
\end{align*}
and if $B=\delta_1$, a similar calculation can be made. Therefore, when $B=\delta_0$ or $B=\delta_1$, any measure $\bar u\in\mathcal P(\mathbb R_+)\cap \mathbb M_2(\R_+)$ is a fixed points of $u\mapsto\mathbb T_B[u,u]$, and in particular measures containing a Dirac mass at the origin are fixed points of $u\mapsto\mathbb T_B[u,u]$.
\end{remark}

\begin{proof}[Proof of Proposition~\ref{prop:delta02}] We assume that $B(\{0\})=0$ and $\nu:=B(\{1\})\in [0,1)$.

\smallskip

We use a contradiction argument: we suppose that $\bar u(\{0\})=\eta\in (0,1)$.  
For any $\varepsilon>0$, we can define $\varphi^\varepsilon\in C_c(\mathbb R_+)$ such that $\varphi^\varepsilon(x)=0$ for $x\geq \varepsilon$, and $\|\varphi^\varepsilon\|_\infty\leq 1$. Then, for $x_1,x_2\in\R_+$,
\begin{align*}
&\varphi^\varepsilon(x_1(1-z_1)+x_2z_2)\leq 1_{x_1(1-z_1)\leq\varepsilon}1_{x_2z_2\leq \varepsilon}\leq \left(1_{x_1\leq \bar x}+1_{x_1\geq \bar x}1_{\bar x(1-z_1)\leq \varepsilon}\right)+\left(1_{x_2\leq \bar x}+1_{x_2\geq \bar x}1_{\bar xz_2\leq \varepsilon}\right)\\
&\quad \leq 1_{x_1\leq \bar x}1_{x_2\leq \bar x}+1_{x_1\leq \bar x}1_{x_2\geq \bar x}1_{\bar xz_2\leq \varepsilon}+1_{x_1\geq \bar x}1_{\bar x(1-z_1)\leq \varepsilon}1_{x_2\leq \bar x}+1_{x_1\geq \bar x}1_{\bar x(1-z_1)\leq \varepsilon}1_{x_2\geq \bar x}1_{\bar xz_2\leq \varepsilon}.
\end{align*}
We can regroup the second and last term on the right hand side of this equation (and the fact that $\|\varphi^\varepsilon\|_\infty\leq 1$) to obtain:
\begin{align*}
&\varphi^\varepsilon(x_1(1-z_1)+x_2z_2)\\
&\quad \leq 1_{x_1\leq \bar x}1_{x_2\leq \bar x}+1_{x_1\geq \bar x}1_{\bar x(1-z_1)\leq \varepsilon}1_{x_2\leq \bar x}+\left(1_{x_1\leq \bar x}1_{x_2\geq \bar x}1_{\bar xz_2\leq \varepsilon}+1_{x_1\geq \bar x}1_{\bar x(1-z_1)\leq \varepsilon}1_{x_2\geq \bar x}1_{\bar xz_2\leq \varepsilon}\right)\\
&\quad \leq 1_{x_1\leq \bar x}1_{x_2\leq \bar x}+1_{x_1\geq \bar x}1_{\bar x(1-z_1)\leq \varepsilon}1_{x_2\leq \bar x}+\left(1_{x_1\leq \bar x}+1_{x_1\geq \bar x}1_{\bar x(1-z_1)\leq \varepsilon}\right)1_{x_2\geq \bar x}1_{\bar xz_2\leq \varepsilon}\\
&\quad \leq 1_{x_1\leq \bar x}1_{x_2\leq \bar x}+1_{x_1\geq \bar x}1_{\bar x(1-z_1)\leq \varepsilon}1_{x_2\leq \bar x}+1_{x_2\geq \bar x}1_{\bar xz_2\leq \varepsilon}.
\end{align*}
We use this inequality to estimate:
\begin{align}
&\int_0^{+\infty} \varphi^\varepsilon(x){\mathbb T}_B[\bar u,\bar u](x)\,dx\label{eq:Tuu}\\
&\quad =\int_0^{+\infty}\int_0^{+\infty} \left(\int_0^1\left(\int_0^1  \varphi(x_1(1-z_1)+x_2z_2)\,dB(z_1)\right)\,dB(z_2)\right)\,d\bar u(x_1)\,d\bar u(x_2)\nonumber\\
&\quad \leq \int_0^{\bar x}\int_0^{\bar x} \left(\int_0^1\left(\int_0^1  \,dB(z_1)\right)\,dB(z_2)\right)\,d\bar u(x_1)\,d\bar u(x_2)\nonumber\\
&\qquad +\int_0^{\bar x}\int_{\bar x}^{+\infty} \left(\int_0^{1}\left(\int_{1-\varepsilon/\bar x}^1  \,dB(z_1)\right)\,dB(z_2)\right)\,d\bar u(x_1)\,d\bar u(x_2)\nonumber\\ 
&\qquad + \int_{\bar x}^{+\infty}\int_{0}^{+\infty} \left(\int_0^{\varepsilon/\bar x}\left(\int_0^1  \,dB(z_1)\right)\,dB(z_2)\right)\,d\bar u(x_1)\,d\bar u(x_2)\nonumber\\
&\quad \leq \left(\int_0^{\bar x}\,d\bar u(x)\right)^2+\left(\int_0^{\bar x}\,d\bar u(x)\right)\left(\int_{\bar x}^{+\infty} \,du(x)\right)\left(\int_{1-\varepsilon/\bar x}^1\,dB(z)\right)+\left(\int_{\bar x}^{+\infty}\,d\bar u\right)\left(\int_0^{\varepsilon/\bar x}\,dB\right)\nonumber\\
&\quad \leq \left(\int_0^{\bar x}\,d\bar u(x)\right)\left[\left(\int_0^{\bar x}\,d\bar u(x)\right)+\left(\int_{\bar x}^{+\infty} \,du(x)\right)\left(\int_{1-\varepsilon/\bar x}^1\,dB(z)\right)-\left(\int_0^{\varepsilon/\bar x}\,dB(z)\right)\right]\nonumber\\
&\qquad +\left(\int_0^{\varepsilon/\bar x}\,dB(z)\right).\nonumber
\end{align}
Since $\bar u(\{0\}=\eta$ , $B(\{0\})=0$ and $B(\{1\})=\nu$, this inequality and the outer regularity of the Borel measures $B$ and $\bar u$ imply:
\begin{align}
&\int_0^{+\infty} \varphi^\varepsilon(x){\mathbb T}_B[\bar u,\bar u](x)\,dx\nonumber\\
&\quad \leq \left(\int_0^{\bar x}\,d\bar u(x)\right)\left[(\eta+o_{\bar x\to 0}(1))+(1-\eta)(\nu+o_{\varepsilon/\bar x\to 0}(1))-o_{\varepsilon/\bar x\to 0}(1)\right]+o_{\varepsilon/\bar x\to 0}(1),\nonumber
\end{align}
where the notation $o_{\bar x\to 0}(1)$ (resp. $o_{\varepsilon/\bar x\to 0}(1)$) designates a function that converges to $0$ when $\bar x\to 0$ (resp. $\varepsilon/\bar x\to 0$). Thanks to our contradiction assumption, $\int_0^{\bar x}\,d\bar u(x)\geq \eta>0$ for any $\bar x>0$, and then
\begin{align}
&\int_0^{+\infty} \varphi(x){\mathbb T}_B[\bar u,\bar u](x)\,dx\nonumber\\
&\quad \leq \left(\int_0^{\bar x}\,d\bar u(x)\right)\left[(\eta+o_{\bar x\to 0}(1))+(1-\eta)(\nu+o_{\varepsilon/\bar x\to 0}(1))-o_{\varepsilon/\bar x\to 0}(1)+o_{\varepsilon/\bar x\to 0}(1)\right].\label{eq:contradi}
\end{align}
The assumption $\eta<1$ implies $\eta<\frac {1-\nu}{1-\nu}$, or, equivalently, $\eta-\eta\nu+\nu<1$. We can then select $\bar x>0$ small enough to have $(\eta+o_{\bar x\to 0}(1))-\eta\nu+\nu<1$. This strict inequality and \eqref{eq:contradi} imply, provided $\varepsilon>0$ is small enough,
\begin{align*}
\int_0^{+\infty} \varphi(x){\mathbb T}_B[\bar u,\bar u](x)\,dx&< \left(\int_0^{\bar x}\,d\bar u(x)\right),
\end{align*}
and then
\begin{align*}
\eta=\bar u(\{0\})&\leq \int_0^\varepsilon\,d\bar u=\int_0^\varepsilon\,d\left({\mathbb T}_B[\bar u,\bar  u]\right)(x)\leq \int_0^{+\infty}\varphi(x)\,d\left({\mathbb T}_B[\bar u,\bar  u]\right)(x)<\eta,
\end{align*}
This is absurd, which concludes our contradiction argument. We have then proven, under the assumptions on $B$ made in the proposition, that $\bar u(\{0\})=\eta\in\{0,1\}$, where $\eta=1$ corresponds to $\bar u=\delta_0$. A similar argument can be used when $\int_{[1-\varepsilon,1)}\,dB\xrightarrow[\varepsilon\to 0]{}0$, and $\lim_{z\to 0}\int_{[0,z]}\,dB=\nu<1$: it is sufficient to use a change of variable to replace $\varphi(x_1(1-z_1)+x_2z_2)$ by $\varphi(x_1z_1+x_2(1-z_2))$ in \eqref{eq:Tuu} and repeat the calculations above.

\end{proof}

\subsection{Concentrations away from $x=0$}
\label{subsec:propequi}

In the introduction, we mentioned the so-called \emph{deterministic transfer} case, which corresponds to a transfer kernel $B(x)=\delta_p(x)$ for some $p\in[0,1]$. If $B(x)=\delta_p(x)$ and $Z\in\mathbb R_+^*$, the equality \eqref{def:barT2} satisfied by $\mathbb T_B$ implies 
\begin{align*}
&\int_0^{+\infty}\varphi(x)\,d( {\mathbb T}_{\delta_p}[\delta_Z,\delta_Z])(x)\\
&\quad =\int_0^{+\infty}\int_0^{+\infty} \left(\int_0^1\left(\int_0^1 \varphi(x_1(1-z_1)+x_2z_2)\,d(\delta_p)(z_1)\right)\,d(\delta_p)(z_2)\right)\,d(\delta_Z)(x_1)\,d(\delta_Z)(x_2)=\varphi(Z),
\end{align*}
which shows that $ {\mathbb T}[\delta_Z,\delta_Z]=\delta_Z$, and the probability measure $\bar u=\delta_Z$ is  a fixed distribution for $u\mapsto{\mathbb T}_B[u,u]$. These concentrated fixed distributions were already noticed and studied in \cite{Hinow}. We show in the next proposition that the singular fixed distributions $\delta_Z$ are actually specific to cases where $B$ is a Dirac mass:

\begin{proposition}\label{prop:dirac}
Let $B\in \mathcal P([0,1])$ and $Z\in\mathbb R_+^*$. If ${\mathbb T}_B$ is defined as in \eqref{def:barT} and satisfies
\[{\mathbb T}_B[\delta_Z,\delta_Z]=\delta_Z,\]
then $B=\delta_p$ for some $p\in[0,1]$.
\end{proposition}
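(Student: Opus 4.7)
The plan is to reduce the identity $\mathbb{T}_B[\delta_Z,\delta_Z]=\delta_Z$ to a variance computation on $B$, using the moment formulas of Proposition~3.3 together with the identification $\mathbb{T}_B[\delta_{x_1},\delta_{x_2}]=\mathbb{K}_B[x_1,x_2]$ recorded in Remark~3.4.

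First, applying \eqref{def:barT4} with $u=v=\delta_Z$, or equivalently the formula recalled in Remark~3.4, gives
\[
\mathbb{T}_B[\delta_Z,\delta_Z]=\mathbb{K}_B[Z,Z].
\]
Hence the hypothesis is equivalent to $\mathbb{K}_B[Z,Z]=\delta_Z$. Since $\delta_Z$ is a probability measure with first moment $Z$ and second moment $Z^2$, it suffices to compare these moments with the values provided by Remark~3.4 (which itself follows from Proposition~3.3). The first moment identity reads $(1-\lambda_{1,B})Z+\lambda_{1,B}Z=Z$, which is automatically true and gives no information. The second moment identity, on the other hand, reads
\[
Z^2=(1-2\lambda_{1,B}+\lambda_{2,B})Z^2+\lambda_{2,B}Z^2+2(1-\lambda_{1,B})\lambda_{1,B}Z^2,
\]
which simplifies to $0=2Z^2(\lambda_{2,B}-\lambda_{1,B}^2)$. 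Since $Z>0$ and since $\lambda_{2,B}-\lambda_{1,B}^2$ is exactly the variance of the probability measure $B$ on $[0,1]$, we deduce $\mathrm{Var}(B)=0$, and therefore $B=\delta_p$ with $p=\lambda_{1,B}\in[0,1]$.

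There is no real obstacle in this argument; the only thing one has to be careful about is invoking the moment formulas \eqref{est:moments1}--\eqref{est:moments2} with the specific choice $u=v=\delta_Z\in\mathbb{M}_2(\mathbb{R}_+)$, which is legitimate since $\delta_Z$ has finite second moment. An alternative, slightly more conceptual, route is to observe from \eqref{def:barT2} that for every $\varphi\in C_c(\mathbb{R}_+)$,
\[
\varphi(Z)=\int_{[0,1]^2}\varphi\bigl(Z(1-z_1)+Zz_2\bigr)\,d(B\otimes B)(z_1,z_2),
\]
which forces $B\otimes B$ to be supported on the diagonal $\{z_1=z_2\}\subset[0,1]^2$; by the standard fact that if independent copies of a law are almost surely equal then the law is a Dirac mass, one concludes $B=\delta_p$. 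Either route yields the conclusion.
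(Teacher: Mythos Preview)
Your argument is correct and is genuinely different from the paper's proof. The paper argues by contradiction: setting $p=\int_0^1 z\,dB(z)$ and assuming $B\neq\delta_p$, it shows that $B$ charges both $[0,p-\varepsilon]$ and $[p+\varepsilon,1]$ for some $\varepsilon>0$, then exhibits an explicit nonnegative test function $\varphi\in C_c(\mathbb R_+^*)$ with $\varphi(Z)=0$ but $\inf_{[(1+2\varepsilon)Z,2Z]}\varphi>0$, and checks directly from \eqref{def:barT2} that $\int\varphi\,d\mathbb T_B[\delta_Z,\delta_Z]>0$, contradicting $\mathbb T_B[\delta_Z,\delta_Z]=\delta_Z$. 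Your route instead leverages the second-moment identity of Remark~3.4 to reduce everything to $\mathrm{Var}(B)=0$, which is shorter and exploits machinery the paper has already built; the paper's argument, by contrast, is self-contained (it does not invoke Proposition~3.3) and makes explicit \emph{where} the extra mass lands when $B$ is not a Dirac. Your alternative ``diagonal support'' remark is also valid and amounts to the same variance observation phrased probabilistically.
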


\begin{remark} 
If $B=\delta_p$ for some $p\in[0,1]$, then the transfer exchanges described by $u\mapsto\mathbb T_B[u,u]$ are given by \eqref{2.2}, and the exchanges bring the traits closer: if $x_1,x_2\geq 0$, then 
\[x_1',x_2'\in [\min(x_1,x_2),\max(x_1,x_2)].\]
With other transfer kernels $B$, it is possible to have $x_1',x_2'\notin [\min(x_1,x_2),\max(x_1,x_2)]$. We then expect less concentrated fixed distributions as described by Proposition~\ref{prop:dirac}.

\smallskip

If $B=\delta_0$, any measure $u\in\mathbb M_2(\R_+)$ is a fixed distribution for the transfer operator, since for $\varphi\in C_c(\R_+)$,
\begin{align*}
&\int_0^{+\infty}\varphi(x)\,d( {\mathbb T}_{\delta_p}[u,u])(x)\\
&\quad =\int_0^{+\infty}\int_0^{+\infty} \left(\int_0^1\left(\int_0^1 \varphi(x_1(1-z_1)+x_2z_2)\,d(\delta_0)(z_1)\right)\,d(\delta_0)(z_2)\right)\,du(x_1)\,du(x_2)\\
&\quad =\int_0^{+\infty}\varphi(x)\,du(x),
\end{align*}
and the same property holds if $B=\delta_1$. This result can be understood intuitively, since  $B=\delta_0$ corresponds to a case where interacting cells do not transfer any fraction of their trait, while if $B=\delta_1$, the traits of the two interacting cells are exchanged. Neither case affects the trait distribution of the population.
\end{remark}

\begin{proof}[Proof of Proposition~\ref{prop:dirac}]
Let $p:=\int_{[0,1]} x \,dB(x)$. Since $B$ is a probability distribution on $[0,1]$, $p=0$ implies $B=\delta_0$ and $p=1$ implies $B=\delta_1$. If $p\in (0,1)$ and  $B\neq \delta_p$, then $\int_{[0,p)\cup(p,1]}\,dB(z)>0$, which implies $\int_{[0,p)}|z-p|\,dB(z)+\int_{(p,1]}|z-p|\,dB(z)>0$, while
\begin{align*}
0&=\int_{[0,1]}z\,dB(z)-p= \int_{[0,1]}(z-p)\,dB(z)=\int_{[0,p)}(z-p)\,dB(z)+\int_{(p,1]}(z-p)\,dB(z)\\
&=-\int_{[0,p)}|z-p|\,dB(z)+\int_{(p,1]}|z-p|\,dB(z),
\end{align*}
which implies $\int_{[0,p)}|z-p|\,dB(z)>0$, and then $\int_{[0,p)}\,dB(z)>0$ as well as $\int_{(p,1]}\,dB(z)>0$. The inner regularity of the Borel measure $B$ then implies
 \[\int_{[0,p-\varepsilon]}\,dB(z)>0,\quad \int_{[p+\varepsilon,1]}\,dB(z)>0,\]
for some $\varepsilon>0$. Let $\varphi\in C_c(\mathbb R_+^*)$ satisfying $\varphi(Z)=0$, $\varphi\geq 0$ and $\inf_{x\in [(1+2\varepsilon)Z,2Z]}\varphi(x)>0$. Then
\begin{align*}
&\int_0^{+\infty}\varphi(x){\mathbb T}_B[\delta_Z,\delta_Z](x)\,dx\\
&=\int_0^{+\infty}\int_0^{+\infty}\int_0^1\int_0^1 \varphi(x_1(1-z_1)+x_2z_2)\,dB(z_1)\,dB(z_2)\,d(\delta_Z)(x_1)\,d(\delta_Z)(x_2)\\
&\quad  =\int_0^1\int_0^1 \varphi(Z(1-z_1)+Zz_2)\,dB(z_1)\,dB(z_2)\geq \int_{p+\varepsilon}^1\int_0^{p-\varepsilon}\varphi\big(Z(1-z_1+z_2)\big)\,dB(z_1)\,dB(z_2)\\
&\quad \geq \left(\inf_{[(1+2\varepsilon)Z,2Z]}\varphi\right)\left(\int_{0}^{p-\varepsilon}\,dB(z_1)\right)\left(\int_{p+\varepsilon}^1 \,dB(z_2)\right)>0,
\end{align*}
which shows that ${\mathbb T}_B[\delta_Z,\delta_Z]\neq \delta_Z$.
\end{proof}
If the transfer kernel $B$ is not a Dirac mass, the proposition above shows that no Dirac mass can be a fixed distribution for the transfer operator.  The next proposition shows under limited conditions on the transfer kernel $B$, any fixed distribution of the transfer operator different from $\delta_0$ is absolutely continuous with respect to the Lebesgue measure:

\begin{proposition}\label{prop:BL1}
Let $B\in \mathcal P([0,1])$ and ${\mathbb T}_B$ defined as in \eqref{def:barT}. We assume that $B\geq f_B\,dx$, where $f_B\in C^0([0,1],\mathbb R_+)$ satisfies $\int_0^1f_B(z)\,dz>0$, and we also assume that $B(\{0,1\})=0$. If $\bar u\in \mathcal P_2(\mathbb R_+^*)$ satisfies ${\mathbb T}_B[\bar u,\bar u]=\bar u$ and $\bar u\neq \delta_0$, then $\frac{\partial \bar u}{\partial x}\in L^1(\mathbb R_+)$ and $\frac{\partial \bar u}{\partial x}(x)>0$ for $x\in\R_+$.
\end{proposition}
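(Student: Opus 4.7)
The plan is to first establish the absolute continuity of $\bar u$ via a Lebesgue decomposition combined with a smoothing property of $\mathbb T_B$, and then deduce the strict positivity of the density from an iteration of a lower-bound inequality.

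I begin with two preliminary observations. Since $B(\{0\})=0$ and $B\neq\delta_0,\delta_1$ (because $B\geq f_B\,dz$ with $\int_0^1 f_B>0$), Proposition~\ref{prop:delta02} applies, and the hypothesis $\bar u\neq\delta_0$ forces $\bar u(\{0\})=0$. Next I would establish the following \emph{smoothing property}: for $u,v\in\mathbb M_2(\R_+)$ with $u$ absolutely continuous with respect to Lebesgue measure, the measure $\mathbb T_B[u,v]$ is itself absolutely continuous. The proof uses \eqref{def:barT2}: since $B(\{1\})=0$, the integration over $z_1$ reduces to $z_1\in[0,1)$, so the change of variables $y=x_1(1-z_1)+x_2z_2$ in the $x_1$-integral against $\tilde u(x_1)\,dx_1$, followed by Tonelli's theorem, yields an explicit $L^1(\R_+)$ density for $\mathbb T_B[u,v]$. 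The symmetric statement for $v$ absolutely continuous uses $B(\{0\})=0$.

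I then apply the Lebesgue decomposition $\bar u=\bar u_{ac}+\bar u_s$ (with $\bar u_s$ singular) and the bilinearity of $\mathbb T_B$ evident from \eqref{def:barT2} to write
\[
\bar u \;=\; \mathbb T_B[\bar u_{ac},\bar u_{ac}] + \mathbb T_B[\bar u_{ac},\bar u_s] + \mathbb T_B[\bar u_s,\bar u_{ac}] + \mathbb T_B[\bar u_s,\bar u_s].
\]
By the smoothing property, the first three terms are absolutely continuous, hence $\bar u_s=\bigl(\mathbb T_B[\bar u_s,\bar u_s]\bigr)_s$. Combined with the mass identity \eqref{est:moments0} from Proposition~\ref{Prop:barT2}, this gives
\[
\bar u_s(\R_+)\;\leq\;\mathbb T_B[\bar u_s,\bar u_s](\R_+)\;=\;\bar u_s(\R_+)^2,
\]
which forces $\bar u_s(\R_+)\in\{0,1\}$. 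To exclude $\bar u_s(\R_+)=1$, I would use $B\otimes B\geq (f_B\,dz)\otimes (f_B\,dz)$ as non-negative measures on $[0,1]^2$ together with a change of variables in \eqref{def:barT2} to produce $g\in L^1(\R_+)$ with $\bar u\geq g\,dx$ and
\[
\int_0^{+\infty}g(y)\,dy\;=\;\Bigl(\int_0^1 f_B(z)\,dz\Bigr)^{\!2}\,\bar u(\R_+^*)^2\;>\;0,
\]
using $\bar u(\{0\})=0$. This contradicts $\bar u=\bar u_s$, hence $\bar u_s=0$ and $\bar u=f\,dx$ with $f\in L^1(\R_+)$.

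For the strict positivity of $f$, the lower bound just established reads $f(w)\geq\iint_{\R_+^2} G_{x_1,x_2}(w)\,f(x_1)\,f(x_2)\,dx_1\,dx_2$, where $G_{x_1,x_2}$ is continuous on $(0,x_1+x_2)$ and strictly positive on the open interval $\bigl(x_1(1-b)+x_2 a,\,x_1(1-a)+x_2 b\bigr)$ for any subinterval $(a,b)\subset[0,1]$ on which $f_B>0$. Setting $x_1=x_2=x$ shows that this interval, $(x(1-(b-a)),\,x(1+(b-a)))$, strictly contains $x$ and extends both below and above it. The main technical obstacle is to leverage this local expansion — together with a bootstrap using the lower bound and the Lebesgue density theorem applied to $\{f>0\}$ — in order to conclude that $\{f>0\}$ equals $\R_+$ up to Lebesgue null, which requires simultaneous propagation toward $0$ and toward $+\infty$ and the filling of intermediate gaps via mixed choices $(x_1,x_2)\in\{f>0\}\times\{f>0\}$.
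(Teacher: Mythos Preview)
Your overall architecture matches the paper's: Lebesgue decomposition, the dichotomy $\bar u_s(\R_+)\in\{0,1\}$ from the mass identity, exclusion of $1$ via the absolutely continuous part $f_B\,dz$ of $B$, and then positivity of the density. Your formulation of the \emph{smoothing property} (``$u$ absolutely continuous $\Rightarrow \mathbb T_B[u,v]$ absolutely continuous'', using $B(\{1\})=0$ for the change of variables) is in fact a cleaner packaging than what the paper does: the paper evaluates $\mathbb T_B[\tilde u,\hat u]$ and $\mathbb T_B[\bar u,\tilde u]$ directly on the singular support $\hat\Omega$ via Lusin and dominated convergence, arriving at the same inequality $\hat u(\R_+)\leq[B(\{0,1\})+\hat u(\R_+)]\hat u(\R_+)$. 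Your appeal to Proposition~\ref{prop:delta02} to secure $\bar u(\{0\})=0$ up front is also a tidy way to avoid degeneracies at the origin.

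The one place where the paper is more efficient is the positivity step, which you flag as the ``main technical obstacle''. Rather than bootstrapping intervals of positivity via iterated lower bounds (which, as you note, requires careful two--sided propagation and gap--filling), the paper first proves a local lower bound of the form
\[
\mathbb T_B[\bar u,\bar u]\;\geq\;c(\bar x)\Bigl(\int_{\bar x-\alpha}^{\bar x+\alpha} d\bar u\Bigr)^{\!2}\,1_{[\bar x-\alpha,\bar x+\alpha]}\,dx,\qquad \alpha=\tfrac{\varepsilon\bar x}{16},
\]
valid for every $\bar x>0$ (using only $B\geq b\,1_{[p-\varepsilon,p+\varepsilon]}$). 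Since $\bar u=\mathbb T_B[\bar u,\bar u]$, this shows that the support of $\bar u$ is open in $(0,\infty)$; being closed as well and non-empty (because $\bar u\neq\delta_0$), it must equal all of $\R_+$, and the same inequality then gives a strictly positive $L^1$ lower bound on every compact of $(0,\infty)$. This connectedness argument replaces your iterative scheme in one stroke and would close the gap in your proposal.
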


\begin{remark}
If $B$ is continuous with respect to the Lebesgue measure, then $\mathbb T_B[u]$ is continuous with respect to the Lebesgue measure for any $u\in\mathcal P_2(\R_+)$, since $\mathbb T_B[u]$ can be seen as a convolution of $B$ and a measure. This property implies that fixed points are continuous with respect to the Lebesgue measure as soon as $B$ has that same property. The statement above provides a weaker assumption on $B$ ensuring that fixed distributions are continuous with respect to the Lebesgue measure.
\end{remark}

\begin{proof}[Proof of Proposition~\ref{prop:BL1}]

Since $f_B\in C^0([0,1],\mathbb R_+)$ satisfies $\int_0^1f_B(z)\,dz>0$, there is $p\in (0,1)$, $\varepsilon\in(0,1)$ and $b>0$ such that
\[[p-\varepsilon,p+\varepsilon]\subset (0,1),\quad B\geq b1_{[p-\varepsilon,p+\varepsilon]}.\]
Let $\bar x>0$ and $0<\alpha$ small. For $x_1,x_2\in[\bar x-\alpha,\bar x+\alpha]$ and $\varphi\in C_c(\R_+)$ such that $\varphi\geq 0$,
\begin{align*}
&\int_0^{+\infty}\varphi(x){ \mathbb K}_B[x_1,x_2](x)\,dx=\int_0^1\left(\int_0^1\varphi(x_1(1-z_1)+x_2z_2)\,dB(z_1)\right)\,dB(z_2)\\
&\quad \geq  b^2\int_{\R}\int_{\R}1_{[p-\varepsilon,p+\varepsilon]}(z_1)1_{[p-\varepsilon,p+\varepsilon]}(z_2)\varphi(x_1(1-z_1)+x_2z_2)\,dz_2\,dz_1\\
&\quad = \frac{b^2}{x_2}\int_{\R}\left(\int_{\R}1_{[p-\varepsilon,p+\varepsilon]}(z_1)1_{[p-\varepsilon,p+\varepsilon]}\left(\frac{Z-x_1(1-z_1)}{x_2}\right)\,dz_1\right)\varphi(Z)\,dZ\\
&\quad \geq  \frac{b^2}{\bar x+\alpha}\int_{\bar x-\alpha}^{\bar x+\alpha}\left(\int_{\R}1_{[p-\varepsilon,p+\varepsilon]}(z_1)1_{\left[1+\frac{x_2(p-\varepsilon)-Z}{x_1},1+\frac{x_2(p+\varepsilon)-Z}{x_1}\right]}\left(z_1\right)\,dz_1\right)\varphi(Z)\,dZ
\end{align*}
We notice that if $Z\in[\bar x-\alpha,\bar x+\alpha]$, then 
\[1+\frac{x_2(p-\varepsilon)-Z}{x_1}\leq 1+\frac{(\bar x+\alpha)(p-\varepsilon)}{\bar x-\alpha}-\frac{\bar x-\alpha}{\bar x+\alpha}\leq p-\varepsilon+\frac{4\alpha}{\bar x-\alpha}\leq p-\frac \varepsilon 2,\]
if we select $\alpha:=\frac{\varepsilon\bar x}{16}$. Similarly, $1+\frac{x_2(p+\varepsilon)-Z}{x_1}\geq p+\frac \varepsilon 2$. Then,
\begin{align*}
&\int_0^{+\infty}\varphi(x) \mathbb K_B[x_1,x_2](x)\,dx\geq  \frac{\varepsilon b^2}{2\bar x+1}\int_{\bar x-\frac{\varepsilon\bar x}{16}}^{\bar x+\frac{\varepsilon\bar x}{16}}\varphi(Z)\,dZ.
\end{align*}
Then, for $\psi\in C_c(\R_+)$ such that $\psi\geq 0$, thanks to \eqref{def:barT4}, for any $\bar x>0$,
\begin{align*}
\int_0^{+\infty}\psi(x)\,d\left(\mathbb T_B[\bar u,\bar u]\right)(x)&=\int_0^{+\infty}\int_0^{+\infty} \left(\int_0^{+\infty} \psi(x)\,d\left(\mathbb K_B[x_1,x_2]\right)(x)\right)\,d\bar u(x_1)\,d\bar u(x_2)\\
&\geq \int_{\bar x-\varepsilon\bar x/16}^{\bar x+\varepsilon\bar x/16} \int_{\bar x-\varepsilon\bar x/16}^{\bar x+\varepsilon\bar x/16} \left(\int_{\bar x-\varepsilon\bar x/16}^{\bar x+\varepsilon\bar x/16} \psi(x)\,d\left(\mathbb K_B[x_1,x_2]\right)(x)\right)\,d\bar u(x_1)\,d\bar u(\bar x)\\
&\geq \frac{\varepsilon b^2}{2\bar x+1}\left(\int_{\bar x-\varepsilon\bar x/16}^{\bar x+\varepsilon\bar x/16}\psi(x)\,dx\right) \left( \int_{\bar x-\varepsilon\bar x/16}^{\bar x+\varepsilon\bar x/16} \,d\bar u(x)\right)^2,
\end{align*}
which implies
\begin{align}\label{eq:demi-harnack}
\mathbb T_B[\bar u,\bar u]&\geq \frac{\varepsilon b^2}{2\bar x+1} \left( \int_{\bar x-\varepsilon\bar x/16}^{\bar x+\varepsilon\bar x/16} \,d\bar u(x)\right)^21_{[\bar x-\varepsilon\bar x/16,\bar x+\varepsilon\bar x/16]}\,dx,
\end{align}
thanks to Lusin's Theorem (see Proposition~\ref{prop:appendix} for a related argument). Since $\bar u\in\mathcal P(\R_+)\cap\mathbb M_2(\R_+)$ is a fixed distribution for $u\mapsto \mathbb T_B[u,u]$, we have $\mathbb  T_B[\bar u,\bar u]=\bar u$, and then this inequality shows that provided $\bar u\neq \delta_0$, the support of $\bar u$ is open in $\mathbb R_+$. Since the support of a measure is closed by definition, it is equal to $\mathbb R_+$, that is $\bar u>0$ on $\mathbb R_+$: for any Borel set $A\subset \R_+$ of positive Lebesgue measure, $\bar u(A)>0$.

\medskip

Thanks to the Lebesgue-Radon-Nikodym theorem (\cite{Rudin}, Section 6.10 p. 121), $\bar u$ can be decomposed into $\bar u=\hat u+\tilde u$, $\hat u,\tilde u\geq 0$, with $\tilde u\in L^1(\mathbb R_+)$ while $\hat u\in\mathbb M(\mathbb R_+)$ and the support of $\hat u$ is of Lebesgue measure $0$:
\[\hat \Omega:=\textrm{supp }\hat u,\quad \lambda(\hat\Omega)=0,\]
where $\lambda$ stands for the Lebesgue measure on $\R_+$. Since $\bar u>0$, we have 
\begin{equation}\label{eq:intu1}
\int_0^{+\infty}\,d\hat u(x)<1,
\end{equation}
and we can decompose $\mathbb T_B[\bar u,\bar u]=\mathbb T_B[\bar u,\tilde  u]+\mathbb T_B[\tilde u,\hat u]+\mathbb T_B[\hat u,\hat u]$. Let $R>0$. Thanks to Lusin's theorem (see ), there is $\varphi_i\in C_c([0,R))$ with $\|\varphi_l\|_{L^\infty([0,R))}\leq 1$ such that $\varphi_l\to 1_{\hat \Omega\cap[0,R)}$ almost everywhere for $\bar u$, which implies $\varphi_l\to 0$ almost everywhere for the Lebesgue measure $\lambda$, and then for the measure the measure $\tilde u$ restricted to $[0,R)$. Then, thanks to the dominated convergence theorem and \eqref{def:barT2},
\begin{align*}
&{\mathbb T}_B[\tilde u,\hat u](\hat\Omega\cap [0,R))=\lim_{l\to\infty} \int_0^{+\infty} \varphi_l(x)d({\mathbb T}_B[\tilde u,\hat u])(x)\nonumber\\
&\quad =\lim_{l\to\infty} \int_{[0,1]\times\R_+\times[0,1]}\left(\int_0^{+\infty}\varphi_l(x_1(1-z_1)+x_2z_2)\,d\tilde u(x_1)\right)\,dB\otimes\hat u\otimes B(z_2,x_2,z_1)\\
&=\lim_{l\to\infty} \int_{[0,1]\times\R_+\times[0,1)}\left(\int_0^{+\infty}\varphi_l(x_1(1-z_1)+x_2z_2)\,d\tilde u(x_1)\right)\,dB\otimes\hat u\otimes B(z_2,x_2,z_1)\\
&\quad +B(\{1\})\left(\int_0^{+\infty}\,d\tilde u(x_1)\right)\lim_{l\to\infty} \int_{[0,1]\times\R_+}\varphi_l(x_2z_2)\,dB\otimes\hat u(z_2,x_2).
\end{align*}
Thanks to the dominated convergence theorem, $\lim_{l\to\infty}\int_0^{+\infty}\varphi_l(x_1(1-z_1)+x_2z_2)\,d\tilde u(x_1)=0$, since $\varphi_l(x_1(1-z_1)+x_2z_2)\xrightarrow[l\to\infty]{}0$ almost everywhere for the Lebesgue measure (notice that $(1-z_1)\neq 0$). Using the dominated convergence theorem once more shows
\[\lim_{l\to\infty} \int_{[0,1]\times\R_+\times[0,1)}\left(\int_0^{+\infty}\varphi_l(x_1(1-z_1)+x_2z_2)\,d\tilde u(x_1)\right)\,dB\otimes\hat u\otimes B(z_2,x_2,z_1)\xrightarrow[l\to\infty]{}0,\]
and thus
\begin{align*}
\left|{\mathbb T}_B[\tilde u,\hat u](\hat\Omega)\right|&=\lim_{R\to\infty}\left|{\mathbb T}_B[\tilde u,\hat u](\hat\Omega\cap[0,R])\right|\leq B(\{1\})\left(\int_0^{+\infty}\,d\tilde u(x)\right)\left(\int_0^{+\infty}\,d\hat u(x)\right)\\
&\leq B(\{1\})\left(1-\int_0^{+\infty}\,d\hat u(x)\right)\left(\int_0^{+\infty}\,d\hat u(x)\right).
\end{align*}
The same argument can be used to show
\begin{align*}
&{\mathbb T}_B[\bar u,\tilde u](\hat\Omega\cap [0,R))=B(\{0\})\left(\int_0^{+\infty}\,d\tilde u(x_2)\right)\lim_{l\to\infty} \int_{[0,1]\times\R_+}\varphi_l(x_1(1-z_1))\,dB\otimes\bar u(z_1,x_1)\\
&\quad =B(\{0\})\left(\int_0^{+\infty}\,d\tilde u(x_1)\right)\lim_{l\to\infty} \int_0^1\left(\int_0^{+\infty}\varphi_l(x_1(1-z_1))\,d\bar u(x_1)\right)\,dB(z_1)\\
&\quad =B(\{0\})\left(\int_0^{+\infty}\,d\tilde u(x_1)\right)\int_0^1\left(\int_0^{+\infty}1_{\hat \Omega\cap[0,R]}(x_1(1-z_1))\,d\hat u(x_1)\right)\,dB(z_1)\\
&\quad \leq B(\{0\})\left(\int_0^{+\infty}\,d\tilde u(x_1)\right)\left(\int_0^{+\infty}\,d\hat u(x)\right),
\end{align*}
where we have used the fact that $\int_0^{+\infty}1_{\hat \Omega\cap[0,R]}(x_1(1-z_1))\,d\tilde u(x_1)=0$, since $\hat \Omega\cap[0,R]$ is negligible with respect to the Lebesgue measure. These two estimates and  $\mathbb T_{B}[\hat u,\hat u](\R_+)=\hat u(\R_+)^2$, which is a consequence of  \eqref{est:moments0}, imply:
\begin{align*}
\hat u(\R_+)=u1_{\Omega}=\mathbb T_B[\bar u,\bar u](\Omega)\leq \left[B(\{0,1\})+\hat u(\R_+)\right]\hat u(\R_+).
\end{align*}
In particular, if $B(\{0,1\})=0$, this inequality implies $\hat u(\R_+)\in\{0,1\}$. Since we assume $\bar u\neq \delta_0$, we have $\int_{(0,+\infty)}\,du(x)>0$, and there is $\bar x>0$ such that $\int_{\bar x-\varepsilon \bar x/16}^{\bar x+\varepsilon \bar x/16}\,d\bar u(x)>0$. Since $\bar u=\mathbb T_B[\bar u,\bar u]$, estimate  \eqref{eq:demi-harnack} implies that $\tilde u$, the continuous part of $\bar u$, satisfies  
\[\forall x\in[\bar x-\varepsilon\bar x/16,\bar x+\varepsilon\bar x/16],\quad \tilde u(x)\geq \frac{\varepsilon b^2}{2\bar x+1} \left( \int_{\bar x-\varepsilon\bar x/16}^{\bar x+\varepsilon\bar x/16} \,d\bar u(x)\right)^2>0.\]
Then $\hat u(\R_+)=1-\tilde u(\R_+)<1$, and $\hat u(\R_+)\in\{0,1\}$ then implies $\hat u(\R_+)=0$, and $\tilde u=\frac{\partial \bar u}{\partial x}$.

\end{proof}

\section{The Cauchy problem for a dynamic transfer model}\label{sec:time-problem}

In this section, we are interested in the definition and existence of measure-valued solutions of the following time-dependent problem:
\begin{align}
\partial_t n_t(x)&=g_t(x)+h(t,x)n_t(x)+\frac 1{\int_0^{+\infty}\,dn_t(y)}\mathbb T_B[n_t,n_t](x),\label{eq:pbt}
\end{align}
where  $g\in L^\infty(\R_+,\mathbb M_2(\mathbb R_+))$ is non-negative and given, $h\in C^0(\R_+^2,\R)$ is given, and the initial condition is $n_0=n^{ini}\in\mathbb M_2(\R_+)$. The measure $g$ and the function $h$ are introduced to facilitate the use of the results of this manuscript in future works, typically to prove the existence of solutions for more complex models where transfers are combined to other biological phenomena. If $g\equiv 0$ and $h\equiv -1$, \eqref{eq:pbt} corresponds to the model of a population structured by the trait $x\in\R_+$ that is only affected by exchanges, similar to the model considered in \cite{Hinow}. The functions $g$ and $h$ in \eqref{eq:pbt} should allow the use of the existence results of this manuscript to study more complex models. We assume that $g$ and $h$ satisfy:
\begin{equation}\label{eq:condg}
\int_0^{+\infty} (1+ y^2)\,dg_t(y)\leq \bar C_A,\quad h(t,x)\leq \bar C_A,\quad  \|\partial_x h(t,\cdot)\|_{L^\infty(\R_+)}\leq \bar C_A,\quad h(t,x)\geq -\bar h_A(x),
\end{equation}
for a constant $\bar C_A>0$, $t,x\geq 0$ and $\bar h_A\in L^\infty_{loc}(\R_+)$. From here on, we denote by $\bar C>0$ constants that only depend on the initial condition $n^{ini}$ and $\bar C_A$, $\bar h_A$, appearing in Assumption \eqref{eq:condg}. In the next section, we will introduce a truncated problem, with a truncation (and regularization) parameter $\varepsilon>0$. In the proof of Lemma~\ref{lem:existencetruncated}, we denote by $C>0$ constants for a  given $\varepsilon>0$.

\subsection{Existence of solutions for a truncated problem}\label{subsec:truncated}

To define a truncated problem corresponding to \eqref{eq:pbt}, let $\Gamma_1\in C_c(\mathbb R_+,\mathbb R_+)$ with $\int_0^{+\infty}\Gamma_1(x)\,dx=1$ and $\textrm{supp }\Gamma_1\subset(1,2)$. We can then define the rescaled function $\Gamma_\varepsilon$ and the smoothed transfer kernel $\tilde {\mathbb K}_B^\varepsilon[x_1,x_2]$ as follows:
\begin{equation}\label{def:Kkeps}
\Gamma_\varepsilon(x):=\frac 1\varepsilon \Gamma_{1}\left(\frac x\varepsilon\right),\quad \tilde {\mathbb K}_B^\varepsilon[x_1,x_2](x):= \left(\Gamma_{\varepsilon}\ast{\mathbb K}_B[x_1,x_2]\right)(x)1_{ x\leq 1/\varepsilon},
\end{equation}
where the symbol $*$ stands for the convolution in $\mathbb R$, that is 
\[\left(\Gamma_\varepsilon\ast {\mathbb K}_B[x_1,x_2]\right)(x)= \int_0^{+\infty}\Gamma_\varepsilon(y-x) \,d\left({\mathbb K}_B[x_1,x_2]\right)(y).\]
For any given $\varepsilon>0$, $\|\tilde {\mathbb K}_B^\varepsilon[x_1,x_2]\|_{ L^\infty(\mathbb R_+)}\leq\|\Gamma_\varepsilon\|_{ L^\infty(\mathbb R_+)}$ is uniformly bounded in $x_1,\,x_2\in\R_+$. Moreover,  $\tilde {\mathbb K}_B^\varepsilon[x_1,x_2]$ is non-negative, and its support is included in $[\varepsilon,1/\varepsilon]$ for any $x_1,x_2\in\R_+$. The truncated problem we consider in this section is:
\begin{align}
n^\varepsilon(t,x)&=e^{\int_0^th(s,x)\,ds}\left(\Gamma_{\varepsilon}\ast n^{ini}\right)(x)1_{ x\leq 1/\varepsilon}+\int_0^t e^{\int_s^t h(\sigma,x)\,d\sigma}\left(\Gamma_{\varepsilon}\ast g_s\right)(x)1_{x\leq 1/\varepsilon}\,ds\nonumber\\
&\quad +\displaystyle\int_0^t\frac{e^{\int_s^t h(\sigma,x)\,d\sigma}}{\int_\varepsilon^{1/\varepsilon}n^\varepsilon(s,y)\,dy}\bigg(\iint_{\mathbb R_+^2} \tilde {\mathbb K}_B^\varepsilon[x_1,x_2](x)n^\varepsilon(s,x_1)n^\varepsilon(s,x_2)\,dx_2\,dx_1\bigg)\,ds.\label{eq:truncatedpb}
\end{align}
In the lemma below we prove the existence of solutions for this problem and provide some estimates that are uniform in $\varepsilon>0$ on these solutions:
\begin{lemma}\label{lem:existencetruncated}
Let $B\in \mathcal P([0,1])$, $\mathbb K_B$ defined as in \eqref{def:barT}, $g\in L^\infty(\R_+,\mathbb M_2(\R_+))$, $h\in C^0(\R_+^2,\R)$ satisfying \eqref{eq:condg}, and $n^{ini}\in\mathbb M_2(\R_+)$. There is $\bar C=\bar C(n^{ini},\bar C_A,\bar h_A)>0$ such that if $\varepsilon>0$ is small enough, there is a unique solution $n^\varepsilon\in L^\infty_{loc}(\R_+,L^\infty(\R_+,\R_+))$ to the truncated problem defined by \eqref{def:Kkeps}-\eqref{eq:truncatedpb}, that is non-negative and the support of $x\mapsto n^\varepsilon(t,x)$ is included in $[\varepsilon,1/\varepsilon]$ for $t\geq 0$. It satisfies, for $t\in\R_+$,
\begin{equation}\label{eq:ineqmom2}
\int_0^{+\infty} n^\varepsilon(t,x)\,dx\leq \bar C e^{\bar Ct},\quad \int_0^{+\infty} x^2n^\varepsilon(t,x)\,dx\leq \bar Ce^{\bar Ct}, \quad \int_0^{+\infty} n^\varepsilon(t,x)\,dx\geq \frac 1{\bar C}e^{-\bar Ct}.
\end{equation}
Moreover, $t\mapsto n^\varepsilon(t,\cdot)$ is locally Lipschitz continuous for the total variation topology, i.e., for $t_1,t_2\geq 0$,
\begin{equation}\label{eq:Lipeps}
d_{TV}\Big(n^\varepsilon(t_1,\cdot)\,dx,n^\varepsilon(t_2,\cdot)\,dx\Big)\leq \bar Ce^{\bar C\max(t_1,t_2)}|t_1-t_2|.
\end{equation}

\end{lemma}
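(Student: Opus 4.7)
The proof is a Picard-type fixed point argument on the integral equation~\eqref{eq:truncatedpb}, made tractable by the truncation: the mollified kernel $\tilde{\mathbb K}_B^\varepsilon[x_1,x_2]$ is uniformly bounded in $L^\infty(\R_+)$ (by $\|\Gamma_\varepsilon\|_\infty$) and supported in $[\varepsilon,1/\varepsilon]$, so that the whole right-hand side of~\eqref{eq:truncatedpb} has support in $[\varepsilon,1/\varepsilon]$ as well. I would work in the Banach space $X_T=C^0([0,T],L^\infty([\varepsilon,1/\varepsilon],\R))$ and seek a fixed point of the map $\Phi$ defined by the right-hand side of~\eqref{eq:truncatedpb} in a closed subset $Y_T\subset X_T$ consisting of non-negative functions whose mass $\int_\varepsilon^{1/\varepsilon} n(t,x)\,dx$ lies in a fixed interval $[m_\star,M_\star]$.

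The first step is to establish the a priori bounds~\eqref{eq:ineqmom2}. Integrating~\eqref{eq:truncatedpb} against $dx$ and using~\eqref{est:moments0} to evaluate $\int d(\mathbb T_B[n^\varepsilon,n^\varepsilon])=(\int n^\varepsilon)^2$, the transfer term contributes exactly $\int n^\varepsilon(s,x)\,dx$ after division by the mass. Combined with $h\leq \bar C_A$ and $\int(1+y^2)\,dg_t\leq \bar C_A$, this yields a linear Gronwall-type inequality $M(t)\leq \bar C+\bar C\int_0^t M(s)\,ds$ for $M(t):=\int n^\varepsilon(t,x)\,dx$, hence the upper bound. Testing against $x^2$ and using~\eqref{est:moments2} produces an analogous inequality for the second moment controlled by the first and second moments of $n^\varepsilon$ (and those of $g_s$), giving the second bound. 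For the lower bound, since the non-transfer part of~\eqref{eq:truncatedpb} is already non-negative, $M(t)\geq e^{-t\sup_{[0,R]}\bar h_A}\int_{x\leq R}(\Gamma_\varepsilon * n^{ini})(x)\,dx$ for any $R>0$; choosing $R$ so that $n^{ini}$ puts most of its mass on $[0,R]$, and then $\varepsilon$ small enough, makes this bound uniform in $\varepsilon$.

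The contraction step is pointwise in $x$. For $n_1,n_2\in Y_T$, the difference $\Phi(n_1)(t,x)-\Phi(n_2)(t,x)$ comes entirely from the nonlinear transfer term, which I split via
\[
\frac{AB}{N_1}-\frac{A'B'}{N_2}=\frac{AB-A'B'}{N_1}+A'B'\left(\frac{1}{N_1}-\frac{1}{N_2}\right),
\]
with $N_i=\int n_i(s,\cdot)$ bounded below by $m_\star$. Using the uniform $L^\infty$ bound on $\tilde{\mathbb K}_B^\varepsilon$, the bilinearity of the transfer in $n$, and the $L^\infty([\varepsilon,1/\varepsilon])$-control of $n_1,n_2$, each contribution gains a factor of $T$, so that $\Phi$ is a contraction on $Y_{T^*}$ for $T^*$ sufficiently small (with $T^*$ possibly depending on $\varepsilon$). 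A unique local solution then extends to $[0,+\infty)$ by iteration, using the uniform-in-time a priori bounds to reset the constants at each step and maintain membership in $Y_T$.

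Finally, the Lipschitz estimate~\eqref{eq:Lipeps} follows directly from~\eqref{eq:truncatedpb}: for $t_1<t_2$, the difference $n^\varepsilon(t_2,\cdot)-n^\varepsilon(t_1,\cdot)$ is a sum of terms of the form $\bigl(e^{\int_{t_1}^{t_2}h(\sigma,\cdot)\,d\sigma}-1\bigr)F(t_1,\cdot)$ and $\int_{t_1}^{t_2}G(s,\cdot)\,ds$, whose $L^1(\R_+)$-norms are controlled by $\bar Ce^{\bar Ct_2}|t_2-t_1|$ using~\eqref{eq:ineqmom2} and $|e^y-1|\leq |y|e^{|y|}$. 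The main technical difficulty is obtaining the mass lower bound and the moment bounds with constants independent of $\varepsilon$; this hinges on using the second moment estimate to confine the mass to a compact set on which $\bar h_A\in L^\infty_{loc}(\R_+)$ is bounded, and on the fact that $\Gamma_\varepsilon*n^{ini}\rightharpoonup n^{ini}$ in the weak sense of measures so that, for any fixed compact $K$, $\int_K(\Gamma_\varepsilon * n^{ini})(x)\,dx$ is bounded below uniformly for $\varepsilon$ small.
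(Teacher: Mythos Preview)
Your proposal is correct and follows essentially the same route as the paper: a Picard/contraction argument in $L^\infty([\varepsilon,1/\varepsilon])$ exploiting the uniform bound $\|\tilde{\mathbb K}_B^\varepsilon\|_\infty\leq\|\Gamma_\varepsilon\|_\infty$, followed by Gronwall-type moment bounds and a direct computation for the total-variation Lipschitz estimate. The paper phrases the fixed point via an explicit iterative sequence $(n_k^\varepsilon)_k$ rather than an abstract map $\Phi$ on a mass-constrained set $Y_T$, but the content is the same. One small imprecision: because of the cut-off $1_{x\leq 1/\varepsilon}$ in~\eqref{def:Kkeps}, $\int_0^{+\infty}\tilde{\mathbb K}_B^\varepsilon[x_1,x_2](x)\,dx\leq 1$ rather than $=1$, so the transfer term contributes \emph{at most} $\int n^\varepsilon(s,x)\,dx$ after division by the mass; this is harmless for the upper bound but worth stating correctly.
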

In this statement, $d_{TV}$ stands for the total variation distance between two measures, that is, with $\nu,\mu\in \mathbb M(\R_+)$,
\begin{equation}\label{def:TV}
d_{TV}(\mu,\nu)=\sup_{\varphi\in L^\infty(\R_+),\,\|\varphi\|_{L^\infty(\R_+)}\leq 1} \int_0^{+\infty}\varphi(x)\,d(\mu-\nu)(x).
\end{equation}

\begin{proof}
We consider $\varepsilon>0$ as a fixed parameter in this proof. 

 \smallskip

\noindent\textbf{Step 1: Construction of a global solution of \eqref{eq:truncatedpb}}

We construct a sequence $(n_k^\varepsilon)_{k\in\mathbb N}$, for some $T>0$ that we will select later. For $k=0$ and $t\in[0,T]$, we define 
\[n_0^\varepsilon(t,x)=\left(\Gamma_{\varepsilon}\ast n^{ini}\right)(x)1_{ x\leq 1/\varepsilon},\]
so that $n_0^\varepsilon(t,\cdot)\in \mathbb M_2(\mathbb R_+)\cap L^\infty(\R_+)$. Then $\int_0^{+\infty}(1+x^2)n_0(t,x)\,dy$ and $\|n_0(t,\cdot)\|_{L^\infty(\R_+)}$ are bounded independently from $t\geq 0$, while the support of $n_0^\varepsilon(t,\cdot)$ is included in $[\varepsilon,1/\varepsilon]$ for $t\geq 0$. We construct $n_{k+1}^\varepsilon$ for $k\in\N$ by induction: given $n_k^\varepsilon\in L^\infty([0,T]\times\R_+)$ a non-negative function, we can define $\left((t,x)\mapsto n_{k+1}^\varepsilon(t,x)\right)\in L^\infty([0,T]\times\mathbb [\varepsilon,1/\varepsilon],\R_+)$ with the following formula, that is an ordinary differential equation in integral form:
\begin{align}
n_{k+1}^\varepsilon(t,x)&=e^{\int_0^th(s,x)\,ds}\left(\Gamma_{\varepsilon}\ast n^{ini}\right)(x)1_{ x\leq 1/\varepsilon}+\int_0^t e^{\int_s^t h(\sigma,x)\,d\sigma}\left(\Gamma_{\varepsilon}\ast g_s\right)(x)1_{x\leq 1/\varepsilon}\,ds\nonumber\\
&\quad +\int_0^t \frac{e^{\int_s^t h(\sigma,x)\,d\sigma}}{\int_0^{+\infty}n^\varepsilon_k(t,y)\,dy}\bigg(\iint_{\mathbb R_+^2} \tilde {\mathbb K}_B^\varepsilon[x_1,x_2](x)n_k^\varepsilon(s,x_1)n_k^\varepsilon(s,x_2)\,dx_1\,dx_2\bigg)\,ds.
\label{eq:scheme}
\end{align}
Since $0\leq \tilde {\mathbb K}_B^\varepsilon[x_1,x_2](x)\leq \|\Gamma_\varepsilon\|_{L^\infty(\R_+)}$ for $x,x_1,x_2\in\R_+$, the Cauchy-Lipschitz theorem shows that the function $n_{k+1}^\varepsilon\in L^\infty([0,T]\times\R_+)$ is a well defined continuous  function. Since $\left(\Gamma_{\varepsilon}\ast n^{ini}\right)\geq 0$, $\left(\Gamma_{\varepsilon}\ast g_s\right)\geq 0$, $\tilde K_B^\varepsilon[x_1,x_2]\geq 0$ and $n_k^\varepsilon\geq 0$, each term in \eqref{eq:scheme} is non-negative and then the function $n^\varepsilon_{k+1}$ itself is non-negative. Thanks to the definition \eqref{def:Kkeps} of $\Gamma_\varepsilon$ and $\tilde{\mathbb K}_B^\varepsilon[x_1,x_2](x)$, as well as the indicator functions multiplying the initial condition $\Gamma_\varepsilon\ast n^{ini}$ and source term $\Gamma_\varepsilon\ast g_s$, the support of $x\mapsto n_{k+1}^\varepsilon(t,x)$ is included in $[\varepsilon,1/\varepsilon]$ for $t\geq 0$. An integration along $x$ of \eqref{eq:scheme} shows
\begin{align}
&\int_0^{+\infty}n_{k+1}^\varepsilon(t,x)\,dx\leq e^{\int_0^th(s,x)\,ds}\int_0^\infty \,dn^{ini}(x)+\int_0^te^{\int_s^th(\sigma,x)\,d\sigma}\left(\int_0^{+\infty}\,dg_s(x)\right)\,ds\nonumber\\
&\quad +\int_0^t\frac{e^{\int_s^t h(\sigma,x)\,d\sigma}}{\int_0^{+\infty}n^\varepsilon(t,y)\,dy}\bigg(\iint_{\mathbb R_+^2}\bigg(\int_0^{+\infty}\tilde{\mathbb K}_B^\varepsilon[x_1,x_2](x)\,dx\bigg)n_k^\varepsilon(s,x_1)n_k^\varepsilon(s,x_2)\,dx_1\,dx_2 \bigg)\,ds\nonumber\\
&\quad \leq \bar C_1e^{\bar C_3t}+\bar C_2\int_0^t e^{\bar C_3(t-s)} \left(\int_0^{+\infty}n_{k}^\varepsilon(s,x)\,dx\right)\,ds,\label{est:L1}
\end{align}
where we have introduced numbered constants to improve readability (notice in particular that $\bar C_i=\bar C_i(n^{ini},\bar C_A,\bar h_A)>0$, $i\in\{1,2,3\}$ are independent from $\varepsilon>0$). Thanks to the definition of $n_0^{\varepsilon_k}$, we have $\int_0^{+\infty}n_{0}^\varepsilon(s,x)\,dx\leq \bar C_1e^{(\bar C_2+\bar C_3)t}$, and we use an induction to show a similar estimate on $n_k^\varepsilon$ for any $k\in\mathbb N$: if
\begin{equation}\label{eq:estL1}
\int_0^{+\infty}n_{k}^\varepsilon(s,x)\,dx\leq \bar C_1e^{(\bar C_2+\bar C_3)t}
\end{equation}
for $k\in\N$, then inequality \eqref{est:L1} proves that \eqref{eq:estL1} also holds for $k+1$. This induction proves the upper bound on $\int_0^{+\infty} n^\varepsilon(t,x)\,dx$ stated in \eqref{eq:ineqmom2}. Since $n^\varepsilon_k(t,x)=0$ for $x\geq 1/\varepsilon$, we have also  proven that  $n_k^\varepsilon\in L^\infty([0,T],\mathbb M_2(\mathbb R_+))$. Moreover, estimate \eqref{eq:estL1} implies
\begin{align}
&\|n_{k+1}^\varepsilon(t,\cdot)\|_{L^\infty(\R_+)}\leq\|\Gamma_{\varepsilon}\|_{L^\infty(\R_+)}\bigg\{e^{\int_0^th(s,x)\,ds}\int_0^\infty \,dn^{ini}(x)+\int_0^t e^{\int_s^th(\sigma,x)\,d\sigma}\int_0^{+\infty}\,dg_s(x)\,ds\nonumber\\
&\qquad +\displaystyle\int_0^t\frac{\max_{x\in\R_+}e^{\int_s^th(\sigma,x)\,d\sigma}}{\int_0^{+\infty}n_k^\varepsilon(s,y)\,dy}\bigg(\iint_{\mathbb R_+^2}n_k^\varepsilon(s,x_1)n_k^\varepsilon(s,x_2)\,dx_2\,dx_1\bigg)\,ds\bigg\}\leq Ce^{Ct},\label{est:Linf}
\end{align}
where $C>0$ is a constant that is not independent from $\varepsilon>0$. In particular, $n_{k+1}^\varepsilon(t,\cdot)\in L^\infty(\mathbb R_+)$ for $t\in[0,T]$, and we can compute, for $x\in\R_+$,
\begin{align}
&\|n_{k+2}^\varepsilon(t,\cdot)-n_{k+1}^\varepsilon(t,\cdot)\|_{L^\infty(\mathbb R_+)}\nonumber\\
&\quad\leq \max_{x\in\R_+}\bigg|\int_0^t\frac{e^{\int_s^th(\sigma,x)\,d\sigma}}{\int_0^{+\infty}n_{k+1}^\varepsilon(s,y)\,dy}\bigg(\iint_{\mathbb R_+^2}\tilde{\mathbb K}_B^\varepsilon[x_1,x_2](x)  n_{k+1}^\varepsilon(s,x_1)n_{k+1}^\varepsilon(s,x_2)\,dx_1\,dx_2\bigg)\,ds\nonumber\\
&\quad -\int_0^t\frac{e^{\int_s^th(\sigma,x)\,d\sigma}}{\int_0^{+\infty}n_k^\varepsilon(s,y)\,dy}\bigg(\iint_{\mathbb R_+^2}\tilde{\mathbb K}_B^\varepsilon[x_1,x_2](x)n_k^\varepsilon(s,x_1)n_k^\varepsilon(s,x_2)\,dx_1\,dx_2\bigg)\,ds\bigg|\nonumber\\
&\quad \leq \max_{x\in\R_+}\int_0^te^{\int_s^th(\sigma,x)\,d\sigma}\bigg(\iint_{\mathbb R_+^2}\tilde{\mathbb K}_B^\varepsilon[x_1,x_2](x)\left|(n_{k+1}^\varepsilon(s,x_1)-n_{k}^\varepsilon(s,x_1)\right|\frac{n_{k+1}^\varepsilon(s,x_2)}{\int_0^{+\infty}n_{k+1}^\varepsilon(s,y)\,dy}\,dx_1\,dx_2\bigg)\,ds\nonumber\\
&\qquad +\max_{x\in\R_+}\int_0^te^{\int_s^th(\sigma,x)\,d\sigma}\bigg(\iint_{\mathbb R_+^2}\tilde{\mathbb K}_B^\varepsilon[x_1,x_2](x)\left|\int_0^{+\infty}(n_{k}^\varepsilon(s,y)-n_{k+1}^\varepsilon(s,y))\,dy\right| \nonumber\\
&\qquad\phantom{+\int_0^te^{\int_s^th(\sigma,x)\,d\sigma}\bigg(\iint_{\mathbb R_+^2}}\frac{n_{k}^\varepsilon(s,x_1)}{\int_0^{+\infty}n_{k}^\varepsilon(s,y)\,dy}\frac{n_{k+1}^\varepsilon(s,x_2)}{\int_0^{+\infty}n_{k+1}^\varepsilon(s,y)\,dy}\,dx_1\,dx_2\bigg)\,ds\nonumber\\
&\qquad +\max_{x\in\R_+}\int_0^te^{\int_s^th(\sigma,x)\,d\sigma}\bigg(\iint_{\mathbb R_+^2}\tilde{\mathbb K}_B^\varepsilon[x_1,x_2](x)\frac{n_{k}^\varepsilon(s,x_1)}{\int_0^{+\infty}n_{k}^\varepsilon(s,y)\,dy}\left|n_{k+1}^\varepsilon(s,x_2)-n_{k}^\varepsilon(s,x_2)\right|\,dx_1\,dx_2\bigg)\,ds.\label{eq:contraction}
\end{align}
To estimate the first term on the right hand side of this equation, we notice that, for $x\in\R_+$ and $x_1,x_2\in[\varepsilon,1/\varepsilon]$,
\begin{align*}
&\int_0^{+\infty} \tilde {\mathbb K}_B^\varepsilon[x_1,x_2](x)\,dx_1=\int_\varepsilon^{1/\varepsilon}\int_0^{+\infty} \Gamma_\varepsilon(x-y) \tilde{\mathbb K}_B^\varepsilon[x_1,x_2](y)\,dy\,dx_1\\
&\quad \leq \|\Gamma_\varepsilon\|_{ L^\infty(\R_+)}\int_\varepsilon^{1/\varepsilon}\left(\int_0^{+\infty} \mathbb K_B[x_1,x_2](y)\,dy\right)\,dx_1\leq \frac 1\varepsilon\|\Gamma_\varepsilon\|_{ L^\infty(\R_+)},
\end{align*}
and then, since $n_{k+1}^\varepsilon(s,\cdot)-n_k^\varepsilon(s,\cdot)$ has a support included in $[\varepsilon,1/\varepsilon]$ for $s\in[0,t]$,
\begin{align*}
&\int_0^te^{\int_s^th(\sigma,x)\,d\sigma}\bigg(\iint_{\mathbb R_+^2}\tilde{\mathbb K}_B^\varepsilon[x_1,x_2](x)\left|n_{k+1}^\varepsilon(s,x_1)-n_{k}^\varepsilon(s,x_1)\right|\frac{n_{k+1}^\varepsilon(s,x_2)}{\int_0^{+\infty}n_{k+1}^\varepsilon(s,y)\,dy}\,dx_1\,dx_2\bigg)\,ds\\
&\quad \leq \int_0^te^{\int_s^th(\sigma,x)\,d\sigma}\bigg(\frac 1\varepsilon\|\Gamma_\varepsilon\|_{ L^\infty(\R_+)}\|n_{k+1}^\varepsilon(s,\cdot)-n_{k}^\varepsilon(s,\cdot)\|_{L^\infty(\mathbb R_+)}\int_0^{+\infty}\frac{n_{k+1}^\varepsilon(s,x_2)}{\int_0^{+\infty}n_{k+1}^\varepsilon(s,y)\,dy}\,dx_2\bigg)\,ds\\
&\quad \leq  C\int_0^{t} e^{\bar C(t-s)}\|n_{k+1}^\varepsilon(s,\cdot)-n_{k}^\varepsilon(s,\cdot)\|_{L^\infty(\mathbb R_+)}\,ds.
\end{align*}
This argument can be repeated for the last term in \eqref{eq:contraction}, while the second term on the right hand side of \eqref{eq:contraction} can be estimated thanks to the boundedness of $\tilde {\mathbb K}^\varepsilon_B[x_1,x_2]$ and to $\left|\int_0^{+\infty}n_{k}^\varepsilon(s,y)-n_{k+1}^\varepsilon(s,y))\,dy\right|\leq \frac 1\varepsilon\left\|n_{k}^\varepsilon(s,\cdot)-n_{k+1}^\varepsilon(s,\cdot))\right\|_{L^\infty(\R_+)}$. Then, with a constant $C>0$ independent of $k\in\N$, \eqref{eq:contraction} implies
\begin{align*}
\|n_{k+2}^\varepsilon(t,\cdot)-n_{k+1}^\varepsilon(t,\cdot)\|_{L^\infty(\mathbb R_+)}&\leq C\int_0^{t} e^{C(t-s)}\|n_{k+1}^\varepsilon(s,\cdot)-n_{k}^\varepsilon(s,\cdot)\|_{L^\infty(\mathbb R_+)}\,ds\\
&\leq C e^{Ct} t\sup_{s\in [0,t]}\|n_{k+1}^\varepsilon(s,\cdot)-n_{k}^\varepsilon(s,\cdot)\|_{L^\infty(\mathbb R_+)}.
\end{align*}
If we select $T>0$ small enough, $C e^{Ct} t<1$ for $t\in[0,T]$. We can then use the Banach contraction theorem to show that the sequence $(n_k^\varepsilon)_k$ converges to a limit $n^\varepsilon$. More precisely, $(n_k^\varepsilon)_k$ converges to $n^\varepsilon\in L^\infty([0,T],L^\infty(\R_+))$ for the topology of $L^\infty([0,T]\times \R_+)$. Passing to the limit $k\to\infty$ in \eqref{eq:scheme} shows that $n^\varepsilon$ satisfies \eqref{eq:truncatedpb} almost everywhere for $(t,x)\in [0,T]\times\R_+$, which implies $n^\varepsilon\in W^{1,\infty}([0,T],L^\infty(\R_+))$. The function $n^\varepsilon(t,\cdot)$ has a support included in $[\varepsilon,1/\varepsilon]$ for $t\in[0,T]$ since $n_k^\varepsilon(t,\cdot)$ has this property for $k\in\N$; similarly $n_k^\varepsilon$ is non-negative and satisfies \eqref{eq:estL1}, \eqref{est:Linf}. 

\smallskip

This construction can be iterated to extend the solution for times $t\in[0,T]$ to a solution global in time. Notice that the initial condition for this iteration (that is $n^\varepsilon(lT,\cdot)$ for $l\in\mathbb N$) does not then need to be regularized, since $n^\varepsilon(T,\cdot)$ is bounded thanks to \eqref{est:Linf}. We can then build a global solution to the truncated problem. The uniqueness of that solution results from the local contraction argument \eqref{eq:contraction}.

 \smallskip

\noindent\textbf{Step 2: Proof of estimates \eqref{eq:ineqmom2} and \eqref{eq:Lipeps}}

We have, for $\varphi\in L^1_{loc}(\R_+)$ and $t\geq 0$,
\begin{align}
&\int_0^{+\infty}\varphi(x)n^\varepsilon(t,x)\,dx =\int_0^{+\infty}\varphi(x)e^{\int_0^t h(s,x)\,ds}\left(\Gamma_{\varepsilon}\ast n^{ini}\right)(x)1_{x\leq 1/\varepsilon}\,dx\nonumber\\
&\qquad + \int_0^t \int_0^{+\infty}\varphi(x)e^{\int_s^t h(\sigma,x)\,d\sigma}\left(\Gamma_{\varepsilon}\ast g(s,\cdot)\right)(x)1_{x\leq 1/\varepsilon}\,dx\,ds
\nonumber\\
&\qquad +\int_0^t\bigg[\iint_{\mathbb R_+^2} \bigg(\int_0^{+\infty}\varphi(x)e^{\int_s^t h(\sigma,x)\,d\sigma}\tilde {\mathbb K}_B^\varepsilon[x_1,x_2](x)\,dx\bigg)\frac{n^\varepsilon(s,x_1)n^\varepsilon(s,x_2)}{\int_0^{+\infty}n^\varepsilon(s,y)\,dy}\,dx_1\,dx_2\bigg]\,ds.\label{eq:truncated}
\end{align}
We may select $\varphi(x)=x^2$, and with $\bar C>0$ denoting a constant that is independent from $\varepsilon$, we have
\begin{align*}
&\int_0^{+\infty} x^2n^\varepsilon(t,x)\,dx=\bar Ce^{\bar Ct}+\bar C\int_0^t e^{\bar C(t-s)}\,ds\\
&\qquad +\int_0^t e^{\bar C(t-s)}\bigg[\int_{\mathbb R_+^2} \bigg(\int_0^{+\infty}x^2\,d\left(\tilde{\mathbb K}_B^\varepsilon[x_1,x_2]\right)(x)\bigg)\frac{n^\varepsilon(s,x_1)n^\varepsilon(s,x_2)}{\int_0^{+\infty}n^\varepsilon(t,y)\,dy}\,dx_2\,dx_1\,ds\\
&\quad \leq \bar Ce^{\bar Ct}+\int_0^t e^{\bar C(t-s)}\bigg[\int_{\mathbb R_+^2} \bar C\big(1+x_1^2+x_2^2\big)\frac{n^\varepsilon(s,x_1)n^\varepsilon(s,x_2)}{\int_0^{+\infty}n^\varepsilon(t,y)\,dy}\,dx_2\,dx_1\bigg]\,ds\\
&\quad \leq \bar Ce^{\bar Ct}+\bar C\int_0^t e^{\bar C(t-s)}\bigg(\int_0^{+\infty} x^2n^\varepsilon(s,x)\,dx\bigg)\,ds,
\end{align*}
since we have the following inequality, that results from Remark~\ref{rem:momentsKB}:
\begin{align*}
&\int_0^{+\infty} x^2\tilde {\mathbb K}_B^\varepsilon[x_1,x_2](x)\,dx\leq \int_0^{+\infty} x^2\left(\Gamma_\varepsilon\ast \mathbb K_B[x_1,x_2]\right)(x)\,dx\\
&\quad \leq \int_0^{+\infty} (x+1)^2\,d\left( \mathbb K_B[x_1,x_2]\right)(x)\leq 2\int_0^{+\infty} 1+x^2\,d\left( \mathbb K_B[x_1,x_2]\right)(x)\leq \bar C\left(1+x_1^2+x_2^2\right).
\end{align*}
We use a Duhamel inequality  to prove \eqref{eq:ineqmom2}. With $\varphi(x)\equiv 1$, we prove the lower bound stated in \eqref{eq:ineqmom2}, thanks to the lower bound $h(x)\geq -\bar h(x)$ (with $\bar h\in L^\infty_{loc}(\R_+)$) provided by \eqref{eq:condg}:
\begin{align}
&\int_0^{+\infty}n^\varepsilon(t,x)\,dx\geq \int_0^{R}e^{\int_0^t h(s,x)\,ds}\left(\Gamma_{\varepsilon}\ast n^{ini}\right)(x)\,dx\geq \frac 1{\bar C}e^{-\bar C t},
\end{align}
where $R>0$ is chosen such that $\int_0^{R-1} \,dn^{ini}(x)>0$, and if $\varepsilon>0$ is small enough, we have $R\leq 1/\varepsilon$.  If $0<t_1\leq t_2$ and $\psi\in L^\infty(\R_+)$, then \eqref{eq:truncated} implies
\begin{align}
&\left|\int_0^{+\infty}\psi(x)\left(n^\varepsilon(t_1,x)-n^\varepsilon(t_2,x)\right)\,dx\right|\nonumber\\
&\quad \leq \left(e^{\int_0^{t_1} h(s,x)\,ds}-e^{\int_0^{t_2} h(s,x)\,ds}\right)\int_0^{+\infty}|\psi(x)|\left(\Gamma_{\varepsilon}\ast n^{ini}\right)(x)1_{x\leq 1/\varepsilon}\,dx\nonumber\\
&\qquad + \int_{0}^{t_1} \int_0^{+\infty}|\psi(x)|\,\left|e^{\int_s^{t_1} h(\sigma,x)\,d\sigma}-e^{\int_s^{t_2} h(\sigma,x)\,d\sigma}\right|\left(\Gamma_{\varepsilon}\ast g(s,\cdot)\right)(x)1_{x\leq 1/\varepsilon}\,dx\,ds\nonumber\\
&\qquad + \int_{t_1}^{t_2} \int_0^{+\infty}|\psi(x)|e^{\int_s^{t_2} h(\sigma,x)\,d\sigma}\left(\Gamma_{\varepsilon}\ast g(s,\cdot)\right)(x)1_{x\leq 1/\varepsilon}\,dx\,ds
\nonumber\\
&\qquad +\int_0^{t_1}\left|e^{\int_s^{t_1} h(\sigma,x)\,d\sigma}-e^{\int_s^{t_2} h(\sigma,x)\,d\sigma}\right|\bigg(\iint_{\mathbb R_+^2} \bigg(\int_0^{+\infty}|\psi(x)|\tilde {\mathbb K}_B^\varepsilon[x_1,x_2](x)\,dx\bigg) \frac{n^\varepsilon(s,x_1)n^\varepsilon(s,x_2)}{\int_0^{+\infty}n^\varepsilon(s,y)\,dy}\bigg)\,ds\nonumber\\
&\qquad +\int_{t_1}^{t_2}e^{\int_s^{t_2} h(\sigma,x)\,d\sigma}\bigg(\iint_{\mathbb R_+^2} \left(\int_0^{+\infty}|\psi(x)|\tilde {\mathbb K}_B^\varepsilon[x_1,x_2](x)\,dx\right)\frac{n^\varepsilon(s,x_1)n^\varepsilon(s,x_2)}{\int_0^{+\infty}n^\varepsilon(s,y)\,dy}\,dx_1\,dx_2\bigg)\,ds\nonumber\\
&\quad \leq \bar C \|\psi\|_{L^\infty(\R_+)}\Big(e^{\bar C t_1}\left(1-e^{\int_{t_1}^{t_2} h(s,x)\,ds}\right)+(t_2-t_1)e^{\bar Ct_2}+\int_0^{t_1}e^{\bar C (t_1-s)}\left|1-e^{\int_{t_1}^{t_2} h(\sigma,x)\,d\sigma}\right|\bar Ce^{\bar Cs}\,ds\Big)\nonumber\\
&\quad\leq \bar Ce^{\bar C\max(t_1,t_2)}|t_1-t_2| \,\|\psi\|_{L^\infty(\R_+)}.\label{est:calcTV}
\end{align}
thanks to the bound \eqref{eq:estL1} on $\int_0^{+\infty}n_{k}^\varepsilon(s,x)\,dx$, with $\bar C>0$ a constant independent from $\varepsilon>0$. Since this holds for any $\psi\in L^\infty(\R_+)$, we have proven (see \eqref{def:TV} for the definition of $d_{TV}$):
\[d_{TV}\Big(n^\varepsilon(t_1,\cdot)\,dx,n^\varepsilon(t_2,\cdot)\,dx\Big)\leq \bar Ce^{\bar C\max(t_1,t_2)}|t_1-t_2|.\]
\end{proof}

\subsection{Existence and uniqueness of a solution for the time-dependent problem}\label{subsec:existence}

In the following proposition, we prove the existence of measure-valued solutions for \eqref{eq:pbt}. The idea is to use a tightness and identification strategy: we prove that the approximated solutions of \eqref{eq:truncatedpb} belong to a compact set, so that a subsequence of these approximate solutions converges to a limit. We then proves that this limit is a solution of \eqref{eq:pbt}. This strategy is related to the Cauchy-Peano theorem for the existence of solutions to ordinary differential equations. Tightness-identification strategies are used in probability theory \cite{Bansaye} to construct the limit of stochastic processes that could not be done with completeness arguments. We believe this type of argument will be useful to construct solutions when a spatial structure is considered: We hope then that parabolic estimates can extend the compactness estimates used here.

\smallskip

It would probably be possible to use a more classical completeness argument (similar to the Cauchy-Lipschitz existence theorem for differential equations), using Wasserstein estimates on the transfer operator: see \cite{Magal-Raoul} for Wasserstein estimates on the transfer operator, and e.g. \cite{Dull} for the construction of measure-valued solutions using the bounded Lipschitz distance, that can be seen as generalizations of $W_1-$Wasserstein estimates to measures beyond probability measures.

\begin{proposition}\label{Prop:existence}
Let $B\in \mathcal P([0,1])$,  $g\in L^\infty(\R_+, \mathbb M_2(\R_+))$, $h\in C^0(\R_+^2,\R)$ satisfying \eqref{eq:condg}, $n^{ini}\in\mathbb M_2(\R_+)$, $\mathbb K_B$ defined as in \eqref{def:barT}. There is a unique  non-negative mild solution $n\in L^\infty_{loc}(\R_+,\mathbb M_2(\R_+))$ of \eqref{eq:pbt}, in the sense that
\begin{align}
\int_0^{+\infty}\varphi(x)\,dn_t(x)&=\int_0^{+\infty}e^{\int_0^th(s,x)\,ds}\varphi(x)\,dn^{ini}(x)+\int_0^t\int_0^{+\infty}e^{\int_s^th(\sigma,x)\,d\sigma}\varphi(x)\,dg_s(x)\,ds\nonumber\\
&\quad +\int_0^t\int_0^{+\infty}\frac{e^{\int_s^th(\sigma,x)\,d\sigma}}{\int_0^{+\infty}\,dn_s(y)}\varphi(x)\,d\left(\mathbb T_B[n_s,n_s]\right)(x)\,ds,\label{est:equalvarphi}
\end{align}
for any $\varphi\in C^0(\mathbb R_+)$ such that $|\varphi(x)|\leq C(1+x^2)$ for some $C>0$ and $x\in\R_+$. $t\mapsto n_t$ is locally Lipschitz continuous in time for the total variation distance: there is $\bar C=\bar C(n^{ini},\bar C_A,\bar h_A)>0$ depending on $n^{ini}$ and the constant appearing in \eqref{def:barT}, such that
\begin{equation}\label{eq:W1Lip}
\forall t_1,t_2\in\R_+,\quad d_{TV}\left(n_{t_1},n_{t_2}\right)\leq 2\bar Ce^{\bar C\max(t_1,t_2)}|t_1-t_2|,
\end{equation}
Moreover, $n_t$ is non-negative for $t\geq 0$, and
\begin{equation}\label{eq:ntx2}
\forall t\geq 0,\quad \int_0^{+\infty}x^2\,dn_t(x)\leq \bar Ce^{\bar Ct}.
\end{equation}

\end{proposition}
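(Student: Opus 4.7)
The strategy is the tightness-identification scheme announced in the preamble. I use the family $\{n^\varepsilon\}_{\varepsilon>0}$ constructed in Lemma~\ref{lem:existencetruncated}, extract a convergent subsequence, and show that the limit solves \eqref{est:equalvarphi}. Uniqueness will then follow from a Grönwall argument based on the bilinearity of $\mathbb T_B$ and the lower mass bound in \eqref{eq:ineqmom2}.

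\textbf{Step 1 (compactness).} Fix $T>0$. The uniform upper bound $\int_0^\infty(1+x^2)\,dn^\varepsilon_t \leq \bar C e^{\bar C T}$ on $[0,T]$ provided by \eqref{eq:ineqmom2}, combined with Prokhorov's theorem, yields for each $t$ the relative compactness of $\{n^\varepsilon_t\}_\varepsilon$ in $\mathbb M(\R_+)$ for the weak topology of measures. The uniform time-Lipschitz estimate \eqref{eq:Lipeps} in total variation provides (strictly stronger) equicontinuity in $t$. A diagonal Arzelà-Ascoli argument then produces a subsequence (still denoted by $\varepsilon$) along which $n^\varepsilon_t \to n_t$ uniformly on $[0,T]$ for a metrization of the weak topology of measures, with $n\in C^0([0,T],\mathbb M(\R_+))$. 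The second-moment bound passes to $n$ by lower semicontinuity, giving \eqref{eq:ntx2}; the upper and lower mass bounds of \eqref{eq:ineqmom2} are inherited by $n_t$ (the lower bound via a cut-off in the test function, the upper bound via uniform integrability); \eqref{eq:W1Lip} is obtained by passing to the limit in \eqref{eq:Lipeps}.

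\textbf{Step 2 (identification).} Write \eqref{eq:truncatedpb} in weak form against $\varphi\in C_c(\R_+)$. The initial-data and source contributions converge by standard mollifier arguments together with the fact that $1/\varepsilon\to\infty$. For the nonlinear term, the kernel $\tilde{\mathbb K}^\varepsilon_B[x_1,x_2]$ converges weakly to $\mathbb K_B[x_1,x_2]$ as $\varepsilon\to 0$ by the mollifier property, and $n^\varepsilon_s\otimes n^\varepsilon_s\to n_s\otimes n_s$ weakly on $\R_+^2$ by Theorem~2.8 of \cite{Billingsley}. Combined with the continuity of $\mathbb T_B$ from Proposition~\ref{Prop:barT} and dominated convergence in $s$ (using the uniform second-moment bound to control tails and the uniform lower bound $\int dn^\varepsilon_s\geq \bar C^{-1}e^{-\bar Cs}$ to control the normalization), we obtain \eqref{est:equalvarphi} for $\varphi\in C_c(\R_+)$. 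Extension to $\varphi$ with $|\varphi(x)|\leq C(1+x^2)$ follows by the truncation $\varphi\mapsto\varphi\,\chi_R$, $\chi_R\in C_c(\R_+)$, $\chi_R\equiv 1$ on $[0,R]$, with the tail error controlled via the moment identities of Proposition~\ref{Prop:barT2}.

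\textbf{Step 3 (uniqueness).} Let $n,\tilde n$ be two solutions and $\varphi\in L^\infty(\R_+)$ with $\|\varphi\|_\infty\leq 1$. Set $I(\mu):=\mu(\R_+)$. Using bilinearity of $\mathbb T_B$ and the identity $\frac{A}{a}-\frac{B}{b}=\frac{A-B}{a}+\frac{B(b-a)}{ab}$, the nonlinear term in the difference of the two versions of \eqref{est:equalvarphi} decomposes into contributions of the form $\mathbb T_B[n_s-\tilde n_s,n_s]/I(n_s)$, $\mathbb T_B[\tilde n_s,n_s-\tilde n_s]/I(n_s)$, and $(I(\tilde n_s)-I(n_s))\,\mathbb T_B[\tilde n_s,\tilde n_s]/(I(n_s)I(\tilde n_s))$. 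Extending \eqref{def:barT2} to signed measures via the Jordan decomposition yields $\left|\int\varphi\,d\mathbb T_B[\mu,\nu]\right|\leq \|\varphi\|_\infty\,|\mu|_{TV}\,I(\nu)$, and combined with $|I(n_s)-I(\tilde n_s)|\leq d_{TV}(n_s,\tilde n_s)$, the a priori mass bounds on $n_s,\tilde n_s$ and the bound on $e^{\int_s^t h}$ from \eqref{eq:condg}, this leads to a Grönwall-type estimate $d_{TV}(n_t,\tilde n_t)\leq \bar C e^{\bar C t}\int_0^t d_{TV}(n_s,\tilde n_s)\,ds$, whence $n=\tilde n$. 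The main obstacle is the passage to the limit of Step 2: one must simultaneously control the weak convergence of $\tilde{\mathbb K}^\varepsilon_B\to\mathbb K_B$, of $n^\varepsilon_s\otimes n^\varepsilon_s\to n_s\otimes n_s$, and of the normalization $1/\int dn^\varepsilon_s$, uniformly enough in $s$ to apply dominated convergence, and the lower mass bound in \eqref{eq:ineqmom2} is indispensable to prevent the denominator from degenerating.
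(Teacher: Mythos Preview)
Your proposal is correct and follows essentially the same tightness--identification--Gr\"onwall route as the paper. The only packaging difference is in Step~1: you invoke Prokhorov plus an abstract Arzel\`a--Ascoli on measure-valued curves (equicontinuity in $d_{TV}$ implying equicontinuity in any metric for the weak topology), whereas the paper works test-function-by-test-function, applying Ascoli to the real-valued maps $t\mapsto\int\varphi_i^R\,n^\varepsilon_t$ for a countable dense family $(\varphi_i^R)$, then reconstructing $n_t$ via Riesz--Markov--Kakutani. Both yield the same weak limit; your version is more compact, the paper's is more explicit. One small point to make precise in your write-up: when you say ``\eqref{eq:W1Lip} is obtained by passing to the limit in \eqref{eq:Lipeps}'', note that $d_{TV}$ is not weakly lower semicontinuous in general; the correct argument (which the paper uses implicitly) is to pass to the limit in $\int\varphi\,d(n^\varepsilon_{t_1}-n^\varepsilon_{t_2})$ for each $\varphi\in C_b(\R_+)$ with $\|\varphi\|_\infty\le 1$, using tightness from the second-moment bound to go from $C_c$ to $C_b$, and then recover $d_{TV}$ as the supremum over such $\varphi$.
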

\begin{remark}
In the statement above, we state that \eqref{est:equalvarphi} holds for any continuous test function $\varphi$ satisfying $|\varphi(x)|\leq C(1+x^2)$ for $x\geq 0$. Such test functions are important to perform $W_2$ Wasserstein estimates on solutions: in \cite{Magal-Raoul}, $W_2$ Wasserstein estimates are used to prove  that the transfer operator $\mathbb T_B$ is a contraction for $W_2$ on probability measures belonging to $\mathcal P(\R_+)\cap\mathbb M_2(\R_+)$ with a given center of mass, which provides an interesting way to analyze the effect of transfer operators.
\end{remark}

\begin{remark}
If $B\in\mathcal P([0,1])$ satisfies $B\geq \alpha (\delta_0+\delta_1)$ for some $\alpha>0$ and $u\in \mathcal P(\R_+)\cap \mathbb M_2(\R_+)$, then the proof of Proposition~\ref{prop:delta01} for $\varphi\in C_c(\mathbb R_+)$ can be repeated to show:
\begin{align*}
&\int_0^{+\infty} \varphi(x){\mathbb T}_B[u,u](x)\,dx\nonumber\\
&\quad \geq \alpha^2\int_0^{+\infty}\int_0^{+\infty} \left(\int_0^1\left(\int_0^1 \varphi(x_1(1-z_1)+x_2z_2)\,d(\delta_1)(z_1)\right)\,d(\delta_0)(z_2)\right)\,d u(x_1)\,d u(x_2)\\
&\quad = \alpha^2\int_0^{+\infty}\int_0^{+\infty}\varphi(0)\,d u(x_1)\,d u(x_2)=\alpha^2\varphi(0),
\end{align*}
and then $\bar{\mathbb T}_B[u,u]\geq \alpha^2\delta_0$. This shows that if $B\geq \alpha (\delta_0+\delta_1)$, the solution $n_t$ provided by \eqref{est:equalvarphi} contains a Dirac mass at $x=0$ for any positive time, even if  $n^{ini}$ is regular (for instance $n^{ini}=\tilde n^{ini}\,dx$ with $\tilde n^{ini}\in C^\infty_c((0,\infty))$). The emergence of a Dirac mass in finite time for solutions of \eqref{eq:pbt} shows the necessity of a framework for measure-valued solutions of transfer models. Other examples of singular solutions are provided by the fixed distributions of the transfer operator described in Section~\ref{sec:fixed}, which can be seen as steady solutions of \eqref{eq:pbt} with $g_t\equiv 0$ and $h(t,x)=-1$ for $(t,x)\in\R_+^2$.

\end{remark}

\begin{proof}[Proof of Proposition~\ref{Prop:existence}]

\noindent\textbf{Step 1: Tightness - construction of a non-negative Radon measure $n_t$ for $t\in [0,T]$}

For $\varepsilon>0$, let $n^\varepsilon$ the solution of the truncated problem \eqref{eq:truncatedpb} provided by Lemma~\ref{lem:existencetruncated}. Let $T\in \mathbb N$ and for $R\in \N$, let $(\varphi_i^R)_i$ a dense subset of $ \{\varphi\in C_c(\R_+);\,\textrm{supp }\varphi\subset [0,R]\}$. For $R,i\in\mathbb N$ and $\varepsilon>0$, 
\[\lambda_\varepsilon^T[\varphi_i^R]: (t\in[0,T])\mapsto \int_0^{+\infty} \varphi_i^R(x)n^{\varepsilon}(t,x)\,dx\in\R\]
is Lipschitz continuous thanks to \eqref{eq:Lipeps}, with a Lipschitz constant $\bar Ce^{\bar CT}\|\varphi_i^R\|_{L^\infty(\R_+)}$ that is uniform in $\varepsilon>0$. Thanks to Ascoli Theorem, up to an extraction $(\varepsilon_k)_k$, the sequence $(\lambda^T_{\varepsilon_k}[\varphi_i^R])_k$ converges when $k\to\infty$ to a limit $\Lambda^T[\varphi_i^R]\in W^{1,\infty}([0,T])$:
\begin{equation}\label{def:lambda}
\left\|\lambda^T_{\varepsilon_k}[\varphi_i^R]-\Lambda^T[\varphi_i^R]\right\|_{L^\infty([0,T])}\xrightarrow[k\to\infty]{}0.
\end{equation}
We can use a diagonal extraction to construct a subsequence, that we still denote by $(\varepsilon_k)$, such that the convergence \eqref{def:lambda} holds for any $i,R\in\N$ and any final time $T\in \N$. Thanks to \eqref{def:lambda}, we have, for $i,i',R,R',T\in\N$,
\begin{align}
&\left\|\Lambda^T[\varphi_i^R]-\Lambda^T[\varphi_{i'}^{R'}]\right\|_{L^\infty([0,T])}=\lim_{k\to\infty}\left\|\lambda_{\varepsilon_k}^T[\varphi_i^R]-\lambda^T_{\varepsilon_k}[\varphi_{i'}^{R'}]\right\|_{L^\infty([0,T])}\nonumber\\
&\quad =\lim_{k\to\infty}\left\|\int_0^{+\infty} \left(\varphi_i^R(x)-\varphi_{i'}^{R'}(x)\right)n^{\varepsilon}(\cdot,x)\,dx\right\|_{L^\infty([0,T])}\leq \bar Ce^{\bar C T} \left\|\varphi_i^R-\varphi_{i'}^{R'}\right\|_{L^\infty(\R_+)},\label{eq:LambdaLip}
\end{align}
thanks to the inequality \eqref{eq:ineqmom2}, where the constant $\bar C>0$ is independent from $\varepsilon>0$. 

\smallskip

Since $(\varphi_i^R)_{i,R}$ is a dense subset of $ \big(C_c(\R_+),\|\cdot\|_{L^\infty(\R_+)}\big)$ and $\Lambda^T$ is Lipschitz continuous over this set thanks to \eqref{eq:LambdaLip}, there is a unique Lipschitz continuous extension $\bar \Lambda^T$ of $\Lambda^T$ to $ \big(C_c(\R_+),\|\cdot\|_{L^\infty(\R_+)}\big)$. If $T,T'\in \N$ with $T<T'$, then $\lambda^T_{\varepsilon_k}[\varphi_i^R](t)=\lambda_{\varepsilon_k}^{T'}[\varphi_i^R](t)$ for $t\in[0,T]$ and $i\in\N$, which implies $\Lambda^T[\varphi_i^R](t)=\Lambda^{T'}[\varphi_i^R](t)$, and then $\bar \Lambda^T[\varphi](t)=\bar \Lambda^{T'}[\varphi](t)$ for $t\in[0,T]$ and $\varphi\in C_c(\R_+)$. the function $\bar \Lambda^{T'}[\varphi]$ is then an extension of $\bar \Lambda^{T}[\varphi]$ to $[0,T']$, which allows us to define $\bar \Lambda$ as the application that satisfies $\bar \Lambda[\varphi](t)=\bar \Lambda^{T}[\varphi](t)$ for $\varphi\in C_c(\R_+)$, $t\in\R_+$ and $T>t$. Inequality \eqref{eq:Lipeps} implies that for $T\geq 0$, $\bar \Lambda[\varphi_i^R]$ is uniformly bounded in $W^{1,\infty}_{loc}([0,T])$ with a Lipschitz constant $\bar Ce^{\bar CT}\|\varphi_i^R\|_{L^\infty(\R_+)}$, which implies that for $\varphi\in  C_c(\R_+)$, $\bar \Lambda[\varphi](\cdot)$ is Lipschitz continuous with a Lipschitz constant $\bar Ce^{\bar CT}\|\varphi\|_{L^\infty(\R_+)}$ on $[0,T]$. In particular, for $\varphi\in  C_c(\R_+)$, $\bar \Lambda[\varphi](t)$ is well defined for $t\geq 0$. Since $\varphi\in C_c(\R_+)\mapsto \lambda_\varepsilon^T[\varphi](t)$ is a linear operator, the limit $\varphi\in C_c(\R_+)\mapsto \bar \lambda[\varphi](t)$ is also linear, and it is continuous since \eqref{eq:LambdaLip} implies
\[\left|\bar \Lambda[\varphi](t)-\bar \Lambda[\tilde \varphi](t)\right|\leq \|\varphi-\tilde\varphi\|_{L^\infty(\R_+)}.\]
We may apply therefore apply the Riesz–Markov–Kakutani representation theorem to define $n_t\in \mathbb M(\R_+)$ as the unique Borel measure that satisfies
\[\forall \varphi\in C_c(\R_+),\quad \bar \Lambda[\varphi](t)=\int_0^{+\infty}\varphi(x)\,dn_t(x).\]

\smallskip

\noindent\textbf{Step 2: Properties of $n_t$ for a given $t\in[0,T]$}

Let $\varphi\in C_c(\R_+)$ and $\eta>0$. Since $\varphi$ is compactly supported, there is $R\in\N$ such that $\textrm{supp }\varphi\subset [0,R]$. Thanks to the density of $(\varphi_i^R)_i$ in $\{\varphi\in C_c(\R_+);\,\textrm{supp }\varphi\subset [0,R]\}$, there is $i\in\N$ such that 
\[\|\varphi_{i}^R-\varphi\|_{L^\infty(\R_+)}\leq\eta .\]
We use this inequality and the fact that the support of $\varphi_i^R$ is included in $[0,R]$ to obtain, for $k\in\N$,
\begin{align*}
&\left|\int_0^{+\infty}\varphi(x) n^{\varepsilon_k}(t,x)\,dx-\int_0^{+\infty}\varphi(x) \,dn_t(x)\right|=\left|\int_0^{R}\varphi(x) n^{\varepsilon_k}(t,x)\,dx-\int_0^{R}\varphi(x) \,dn_t(x)\right|\\
&\quad \leq \left|\int_0^{R}\varphi_i^R(x) n^{\varepsilon_k}(t,x)\,dx-\int_0^{R}\varphi_i^R(x) \,dn_t(x)\right|+\|\varphi_i^R-\varphi\|_{L^\infty(\R_+)}\left(\int_0^R\,d|n_t|(x)+\int_0^Rn^{\varepsilon_k}(t,x)\,dx\right)\\
&\quad \leq \Big|\lambda_{\varepsilon_k}[\varphi_i^R](t)-\bar \Lambda[\varphi_i^R](t)\Big|+\eta \left(\int_0^R\,d|n_t|(x)+\int_0^Rn^{\varepsilon_k}(t,x)\,dx\right)\\
&\quad  =\left|\lambda_{\varepsilon_k}[\varphi_i^R](t)-\Lambda^{\lceil t\rceil}[\varphi_i^R](t)\right|+\eta \left(\int_0^R\,d|n_t|(x)+\bar C e^{\bar C t}\right),
\end{align*}
thanks to the estimate \eqref{eq:ineqmom2} on $\int_0^Rn^\varepsilon(t,x)\,dx$. Then, \eqref{def:lambda} implies, for $k\in\N$ large enough,
\begin{align*}
&\left|\int_0^{+\infty}\varphi(x) n^{\varepsilon_k}(t,x)\,dx-\int_0^{+\infty}\varphi(x) \,dn_t(x)\right|\leq \eta \left(1+\int_0^R\,d|n_t|(x)+\bar C e^{\bar C t}\right),
\end{align*}
and since this holds for any $\eta>0$, we have proven that,  for any $\varphi\in C_c(\R_+)$,
\begin{equation}\label{eq:cvnt}
\int_0^{+\infty}\varphi(x) n^{\varepsilon_k}(t,x)\,dx\xrightarrow[k\to\infty]{}\int_0^{+\infty}\varphi(x) \,dn_t(x).
\end{equation}
Since $n^{\varepsilon_k}$ is non-negative, for any non-negative test function  $\varphi\in C_c(\R_+)$, \eqref{eq:cvnt} implies 
\[\int_0^{+\infty}\varphi(x) \,dn_t(x)\geq 0.\]
This property, in turn, implies that $n_t$ is non-negative thanks to Proposition~\ref{prop:appendix}.

\medskip

Let $(\varphi_l)_l\in C_c(\R_+)^\N$ a non-decreasing sequence such that $\varphi_l(x)\xrightarrow[l\to\infty]{} 1+x^2$ for $x\in\R_+$. For $k,l\in \N$, the inequality \eqref{eq:ineqmom2} on $\int_0^{+\infty}\left(1+x^2\right)n^{\varepsilon_k}(t,x)\,dx$ implies $\int_0^{+\infty} \varphi_l(x)n^{\varepsilon_k}(t,x)\,dx\leq \bar Ce^{\bar Ct}$, and then, thanks to the convergence \eqref{eq:cvnt}, for $l\in\N$, $\int_0^{+\infty} \varphi_l(x)\,dn_t(x)\,dx\leq \bar Ce^{\bar Ct}$. Since $n_t$ is non-negative and $(\varphi_l)_l$ is non-decreasing, we may apply the monotone convergence theorem to show
\begin{align}\label{eq:integnt}
\int_0^{+\infty}\left(1+x^2\right)\,dn_t(x)\leq \bar Ce^{\bar C t}.
\end{align}

Let $\psi\in W^{1,\infty}(\R_+)$ and $\chi\in C(\R_+)$ such that $\chi(x)=1$ for $x\leq 1$ and $\chi(x)=0$ for $x\geq 2$, we estimate: 
\begin{align*}
&\int_0^{+\infty} \psi(x)\,d\left(n^{\varepsilon_k}(t,\cdot)\,dx-n_t\right)(x)=\int_0^{+\infty} \psi(x)\left[\chi(x/R)+\big(1-\chi(x/R)\big)\right]\,d\left(n^{\varepsilon_k}(t,\cdot)\,dx-n_t\right)(x)\\
&\leq \int_0^{+\infty} \psi(x)\chi(x/R)\,d\left(n^{\varepsilon_k}(t,\cdot)\,dx-n_t\right)(x)+\int_R^{+\infty} \left(\psi(0)+\|\psi'\|_{L^\infty(\R_+)}x\right)\,d\left(n^{\varepsilon_k}(t,\cdot)\,dx-n_t\right)(x)\\
&\quad \leq \int_0^{+\infty} \psi(x)\chi(x/R)\,d\left(n^{\varepsilon_k}(t,\cdot)\,dx-n_t\right)(x)+\frac {\bar Ce^{\bar Ct}}{R}\left(\psi(0)+\|\psi'\|_{L^\infty(\R_+)}\right),
\end{align*}
thanks to a Chebyshev's inequality and the estimates \eqref{eq:ineqmom2} (resp. \eqref{eq:integnt}) on the first and second moments of $n^{\varepsilon_k}(t,\cdot)$ (resp. $n_t$). The convergence \eqref{eq:cvnt} then implies 
\[\limsup_{k\to\infty} \int_0^{+\infty} \psi(x)\,d\left(n^{\varepsilon_k}(t,\cdot)\,dx-n_t\right)(x)\leq \frac {\bar Ce^{\bar Ct}}{R}\left(\psi(0)+\|\psi'\|_{L^\infty(\R_+)}\right),\]
and since this holds for any $R>0$, we have proven that for any $\psi\in W^{1,\infty}(\R_+)$, the following convergence holds:
\begin{align}
&\int_0^{+\infty} \psi(x)\,d\left(n^{\varepsilon_k}(t,\cdot)\,dx-n_t\right)(x)\xrightarrow[k\to\infty]{} 0. \label{cvnmW1}
\end{align}
This estimate and \eqref{eq:integnt} implies in particular $n_t\in\mathbb M_2(\R_+)$ and $\left(n^{\varepsilon_k}(t,\cdot)\,dx\right)\xrightarrow[k\to\infty]{}n_t$ for the weak topology of measures.

\smallskip

\noindent\textbf{Step 3: Identification - $n_t$ satisfies \eqref{est:equalvarphi} for $\varphi\in W^{1,\infty}(\R_+)$ with compact support}

Let $\varphi\in W^{1,\infty}(\mathbb R_+)$ with compact support. Thanks to \eqref{eq:truncatedpb}, we have, for any $t\geq 0$ and $k\in\mathbb N$,
\begin{align}
&\int_0^{+\infty}\varphi(x)\,dn_t(x)-\int_0^{+\infty}e^{\int_0^t h(s,x)\,ds}\varphi(x)\,dn^{ini}(x)-\int_0^t\int_0^{+\infty} e^{\int_s^th(\sigma,x)\,d\sigma}\varphi(x)\,dg_s(x)\,ds\nonumber\\
&\qquad -\int_0^t\iint_{\mathbb R_+^2} \left(\int_0^{+\infty}e^{\int_s^th(\sigma,x)\,d\sigma}\varphi(x)\,d\left({\mathbb K}_B[x_1,x_2]\right)(x)\right)\,\frac{dn_s(x_1)\,dn_s(x_2)}{\int_0^{+\infty}\,dn_s(y)}\,ds\nonumber\\
&\quad =\int_0^{+\infty}\varphi(x)\,d\left(n_t-n^{\varepsilon_k}(t,\cdot)\,dx\right)(x)+ \int_0^{+\infty}e^{\int_0^t h(s,x)\,ds}\varphi(x)\,d\left(n^{ini}-\left(\Gamma_{{\varepsilon_k}}\ast n^{ini}\right)1_{ \cdot\leq 1/\varepsilon_k}\,dx\right)(x)\nonumber\\
&\qquad +\int_0^t\int_0^{+\infty} e^{\int_s^th(\sigma,x)\,d\sigma}\varphi(x)\,d\left(g_s-\left((\Gamma_{\varepsilon_k}\ast g_s)1_{ \cdot\leq 1/\varepsilon_k}\right)\,dx \right)(x)\,ds\nonumber\\
&\qquad + \int_0^t\iiint_{\mathbb R_+^3} e^{\int_s^th(\sigma,x)\,d\sigma}\varphi(x)\nonumber\\
&\phantom{\qquad + }\,d\left({\mathbb K}_B[x_1,x_2]\otimes n_s\otimes n_s-(\tilde {\mathbb K}_B^{\varepsilon_k}[x_1,x_2]\,dx)\otimes(n^{\varepsilon_k}(s,\cdot)\,dx)\otimes (n^{\varepsilon_k}(s,\cdot)\,dx)\right)(x,x_1,x_2)\frac{ds}{\int_0^{+\infty}\,dn_s(y)}\nonumber\\
&\qquad + \int_0^t\iint_{\mathbb R_+^2} \left(\int_0^{+\infty}e^{\int_s^th(\sigma,x)\,d\sigma}\varphi(x)\tilde {\mathbb K}_B^{\varepsilon_k}[x_1,x_2](x)\,dx\right)\nonumber\\
&\qquad \phantom{+\int_0^trzqer}\left(\int_0^{+\infty}\,dn_s(y)-\int_0^{+\infty}n^{\varepsilon_k}(s,y)\,dy\right)\frac{dn_s(x_1)\,dn_s(x_2)}{\left(\int_0^{+\infty}\,n^{\varepsilon_k}(s,y)\,dy\right)\left(\int_0^{+\infty}\,dn_s(y)\right)}\,ds\nonumber\\
&\quad =:I_1^{\varepsilon_k}+I_2^{\varepsilon_k}+\int_0^tI_3^{\varepsilon_k}(s)\,ds+\int_0^tI_4^{\varepsilon_k}(s)\,\frac{ds}{\int_0^{+\infty}\,dn_s(y)}+\int_0^tI_5^{\varepsilon_k}(s)\,ds
\label{eq:estI1I4}
\end{align}
The term $I_1^{\varepsilon_k}$ converges to $0$ thanks to \eqref{cvnmW1}. The term  $I_2^{\varepsilon_k}$  converges to $0$ thanks to \eqref{eq:ninimnini}, since $x\mapsto e^{\int_0^t h(s,x)\,ds}\varphi(x)$ is a continuous function dominated by $x\mapsto \bar C x$. To estimate $\int_0^tI_3^{\varepsilon_k}(s)\,ds$, we notice that the limit \eqref{eq:cvg} implies $I_3^{\varepsilon_k}(s)\xrightarrow[k\to\infty ]{}0$ for $s\in[0,T]$, which we can combine to the domination:
\begin{align*}
\left|I_3^{\varepsilon_k}(s)\right|&\leq \bar C e^{\bar C(t-s)}\left\|\varphi\right\|_{L^\infty(\R_+)}\left\|s\mapsto \int_0^{+\infty} \,d g_s(x)\right\|_{L^\infty([0,T])},
\end{align*}
to apply the dominated convergence theorem and obtain $\int_0^t I_3^{\varepsilon_k}(s)\,ds\xrightarrow[k\to\infty]{} 0$.

\smallskip

To estimate $\int_0^t\int_0^{+\infty}I_4^{\varepsilon_k}(s)\,\frac{ds}{\int_0^{+\infty}\,dn_s(y)}$, we introduce the cut-off function  $\chi\in W^{1,\infty}(\R_+,[0,1])$ such that $\chi(x)=1$ for $x\leq 1$ and $\chi(x)=0$ for $x\geq 2$. We use the following decomposition of $I_4^{\varepsilon_k}(s)$:
 \begin{align*}
I_4^{\varepsilon_k}(s)&= \iiint_{\R_+^3}e^{\int_s^th(\sigma,x)\,d\sigma}\varphi(x)\chi\left(|(x_1,x_2)\|_{\R^2}/\rho\right)\\
&\phantom{\iint_{\R_+^2}}\,d\left({\mathbb K}_B[x_1,x_2]\otimes n_s\otimes n_s-(\tilde {\mathbb K}_B^{\varepsilon_k}[x_1,x_2]\,dx)\otimes(n^{\varepsilon_k}(s,\cdot)\,dx)\otimes (n^{\varepsilon_k}(s,\cdot)\,dx)\right)(x,x_1,x_2)\\
&\quad +\iiint_{\R_+^3}e^{\int_s^th(\sigma,x)\,d\sigma}\varphi(x)\left(1-\chi\left(\|(x_1,x_2)\|_{\R^2}/\rho\right)\right)\\
&\phantom{\iint_{\R_+^2}}\,d\left({\mathbb K}_B[x_1,x_2]\otimes n_s\otimes n_s-(\tilde {\mathbb K}_B^{\varepsilon_k}[x_1,x_2]\,dx)\otimes(n^{\varepsilon_k}(s,\cdot)\,dx)\otimes (n^{\varepsilon_k}(s,\cdot)\,dx)\right)(x,x_1,x_2)\\
&=:I_{4,1}^{\varepsilon_k}(s)+I_{4,2}^{\varepsilon_k}(s),
 \end{align*}
 with $\rho>0$. Then, $(x_1,x_2)\mapsto \chi\left(\|(x_1,x_2)\|_{\R^2}/\rho\right)$ is supported in $\{(x_1,x_2)\in \R_+^2; \,\|(x_1,x_2)\|_{\R^2}\leq \rho\}$, and  for $(x_1,x_2)$ in this set, the measure $\mathbb K_B[x_1,x_2]$ is supported in $[0,2\rho]$.  It implies that $(x,x_1,x_2)\mapsto e^{\int_s^th(\sigma,x)\,d\sigma}\varphi(x)\chi\left(|(x_1,x_2)\|_{\R^2}/\rho\right)$ is a compactly supported continuous function. Thanks to \eqref{eq:cvnt}, for $s\geq 0$, $n^{\varepsilon_k}(s,\cdot)\,dx$ converges to $n_s$ for the weak topology of measures over $\R_+$ when $k\to\infty$. Thanks to its definition \eqref{def:Kkeps}, $\tilde {\mathbb K}_B^{\varepsilon_k}[x_1,x_2]\,dx$ converges to $\mathbb K_B[x_1,x_2]$ for the weak topology of measures over $\R_+$ when $k\to\infty$. Theorem 2.8 in \cite{Billingsley} then implies the convergence of $(\tilde {\mathbb K}_B^{\varepsilon_k}[x_1,x_2]\,dx)\otimes(n^{\varepsilon_k}(s,\cdot)\,dx)\otimes (n^{\varepsilon_k}(s,\cdot)\,dx)$ to ${\mathbb K}_B[x_1,x_2]\otimes n_s\otimes n_s$ for the weak topology of measures over $\R_+^3$ when $k\to\infty$. Then, for $s\in[0,t]$,
\[I_{4,1}^{\varepsilon_k}(s)\xrightarrow[k\to\infty]{}0.\]
We combine this limit result to the following domination:
\begin{align*}
|I_{4,1}^{\varepsilon_k}(s)|&\leq \left\|(x,x_1,x_2)\mapsto e^{\int_s^th(\sigma,x)\,d\sigma}\varphi(x)\chi\left(|(x_1,x_2)\|_{\R^2}/\rho\right)\right\|_{L^\infty(\R_+)}\\
&\quad \iint_{\R_+^2} \,d\left({\mathbb K}_B[x_1,x_2]\otimes n_s\otimes n_s+(\tilde {\mathbb K}_B^{\varepsilon_k}[x_1,x_2]\,dx)\otimes(n^{\varepsilon_k}(s,\cdot)\,dx)\otimes (n^{\varepsilon_k}(s,\cdot)\,dx)\right)(x,x_1,x_2)\\
&\leq \bar C e^{\bar Ct}\|\varphi\|_{L^\infty(\R_+)},
\end{align*}
and the dominated convergence theorem implies
\begin{equation}\label{eq:cvI41}
\int_0^tI_{4,1}^{\varepsilon_k}(s)\,\frac{ds}{\int_0^{+\infty}\,dn_s(y)}\xrightarrow[k\to\infty]{}0.
\end{equation}

To estimate $\int_0^tI_{4,2}^{\varepsilon_k}(s)\,\frac{ds}{\int_0^{+\infty}\,dn_s(y)}$, we notice that 
 \begin{align}
 \left|\frac d{dx}\left(e^{\int_s^t h(\sigma,\cdot)\,d\sigma}\varphi\right)(x)\right|&=\left|e^{\int_s^t h(\sigma,x)\,d\sigma}\varphi'(x)+e^{\int_s^t h(\sigma,x)\,d\sigma}\varphi(x)\int_s^t \partial_x h(\sigma,x)\,d\sigma\right|\nonumber\\
 &\leq \bar C e^{\bar C t}\|\varphi'\|_{L^\infty(\R_+)}+\bar Ce^{\bar Ct}\left(|\varphi(0)|+x\|\varphi'\|_{L^\infty(\R_+)}\right) t\|\partial_x h\|_{L^\infty(\R_+^2)}\nonumber\\
 &\leq \bar C_\varphi e^{\bar Ct} (1+t)(1+x),\label{eq:Lipes}
 \end{align}
 where $\bar C_\varphi>0$ designates a constant that may depend on $\varphi\in W^{1,\infty}(\R_+)$. This estimate and Remark~\ref{rem:momentsKB} imply: 
\begin{align*}
I_{4,2}^{\varepsilon_k}(s)&\leq \iint_{\R_+^2}\left(\int_0^{+\infty}\bar C_\varphi e^{\bar Ct} (1+t)(1+x)\,d(\tilde{\mathbb K}^{\varepsilon_k}_B[x_1,x_2]\,dx+\mathbb K_B[x_1,x_2])(x)\right)\left(1-\chi\left(\|(x_1,x_2)\|_{\R^2}/\rho\right)\right)\\
&\phantom{\leq \iint_{\R_+^2}}d\left(n_s\otimes n_s+(n^{\varepsilon_k}(s,\cdot)\,dx)\otimes (n^{\varepsilon_k}(s,\cdot)\,dx)\right)(x_1,x_2)\\
&\leq \bar C_\varphi e^{\bar Ct}\iint_{\R_+^2}(1+x_1+x_2)(1_{x_1\geq \rho}+1_{x_2\geq \rho})\,d\left(n_s\otimes n_s+(n^{\varepsilon_k}(s,\cdot)\,dx)\otimes (n^{\varepsilon_k}(s,\cdot)\,dx)\right)(x_1,x_2)\\
&\leq \frac{\bar C_\varphi}{\rho} e^{\bar Ct}\iint_{\R_+^2}(1+x_1^2+x_2^2)\,d\left(n_s\otimes n_s+(n^{\varepsilon_k}(s,\cdot)\,dx)\otimes (n^{\varepsilon_k}(s,\cdot)\,dx)\right)(x_1,x_2)\\
&\leq \frac{\bar C_\varphi e^{\bar C t}}{\rho},
\end{align*}
thanks to a Chebyshev's inequality and estimates \eqref{eq:ineqmom2}, \eqref{eq:integnt} on the second moments of $n_s$, $n^{\varepsilon_k}(s,\cdot)$. Then,
\[\limsup_{k\to\infty}\int_0^tI_{4,2}^{\varepsilon_k}(s)\,\frac{ds}{\int_0^{+\infty}\,dn_s(y)}\leq \frac{\bar C_\varphi e^{\bar C t}}{\rho}.\]
Combining this estimate, that holds for any $\rho>0$, and estimate \eqref{eq:cvI41}, we obtain
\begin{equation}\label{eq:cvI43}
\int_0^tI_{4}^{\varepsilon_k}(s)\,\frac{ds}{\int_0^{+\infty}\,dn_s(y)}\xrightarrow[k\to\infty]{}0.
\end{equation}

\smallskip

To estimate $I_5^{\varepsilon_k}$, we observe that \eqref{cvnmW1} implies
\[\left(\int_0^{+\infty}n^{\varepsilon_k}(s,y)\,dy-\int_0^{+\infty}\,dn_s(y)\right)\xrightarrow[k\to\infty]{}0,\]
while $\left|\int_0^{+\infty}e^{\int_s^th(\sigma,x)\,d\sigma}\varphi(x)\tilde{\mathbb K}^{\varepsilon_k}_B[x_1,x_2](x)\,dx\right|\leq\bar Ce^{\bar C(t-s)}\left\|\varphi\right\|_{L^\infty(\R_+)}$, that provides the following domination of $I_5^{\varepsilon_k}(s)$: 
\begin{equation*}
\left|I_5^{\varepsilon_k}(s)\right|\leq  \bar C e^{\bar C(t-s)}\left\|\varphi\right\|_{L^\infty(\R_+)},
\end{equation*}
thanks to \eqref{eq:ineqmom2} and \eqref{eq:integnt}. The dominated convergence theorem therefore implies $\int_0^tI_5^{\varepsilon_k}(s)\,ds\xrightarrow[k\to\infty]{}0$. 

\smallskip

Thanks to the estimates we have made on the terms on the right hand side of \eqref{eq:estI1I4}, a limit $k\to\infty$ proves \eqref{est:equalvarphi} for $\varphi\in W^{1,\infty}(\R_+)$ with a compact support.

\smallskip

\noindent\textbf{Step 4: Proof that $n_t$ satisfies \eqref{est:equalvarphi} for general $\varphi$ and uniqueness of mild solutions}

Let now $\varphi\in C^0(\mathbb R_+)$ satisfying $|\varphi(x)|\leq C(1+x^2)$, and $t\geq 0$. We define the following sequence of functions:
\[\varphi_l(x)=(\psi\ast \Gamma_{1/l})(x)\chi(x/l),\]
where $\Gamma_{1/l}$ is defined by \eqref{def:Kkeps} and $\chi\in W^{1,\infty}(\R_+)$ satisfies $\chi(x)=1$ for $x\leq 1$ and $\chi(x)=0$ for $x\geq 2$. Then, $\varphi_l\in W^{1,\infty}(\mathbb R_+)$ for $l\in\mathbb N$ and we have $\varphi_l(x)\xrightarrow[l\to\infty]{}\varphi(x)$ for $x\in\R_+$. \eqref{est:equalvarphi} is then satisfied for $\varphi:=\varphi_l$, and we now show that we can pass to the limit $l\to\infty$ in each term of the equation to prove that \eqref{est:equalvarphi} holds with $\varphi:=\psi$. To do so, we use the dominated convergence Theorem. We notice that we have the uniform bound $|\varphi_l(x)|\leq C(1+x^2)$, and $x\mapsto C(1+x^2)$ belongs to the set of integrable functions on $\R_+$ for the measure $n_t$ (thanks to \eqref{eq:integnt}), as well as for the measure $e^{\int_0^t h(\sigma,\cdot)\,d\sigma}n^{ini}$, and then, thanks to the dominated convergence theorem,
\[\int_0^{+\infty}\varphi_l(x)\,dn_t(x)\xrightarrow[l\to\infty]{}\int_0^{+\infty}\varphi(x)\,dn_t(x),\]
\[ \int_0^{+\infty}e^{\int_0^th(s,x)\,ds}\varphi_l(x)\,dn^{ini}(x)\xrightarrow[l\to\infty]{}\int_0^{+\infty}e^{\int_0^th(s,x)\,ds}\varphi(x)\,dn^{ini}(x).\]
We can also notice that $(t,x)\mapsto C(1+x^2)$ belongs to the set of integrable functions on $[0,t]\times \R_+$ for the measure $e^{\int_s^th(\sigma,\cdot)\,d\sigma}\varphi(\cdot)\,dg_s\,dt$, and then
\[\int_0^t\int_0^{+\infty}e^{\int_s^th(\sigma,x)\,d\sigma}\varphi_l(x)\,dg_s(x)\,ds\xrightarrow[l\to\infty]{}\int_0^t\int_0^{+\infty}e^{\int_s^th(\sigma,x)\,d\sigma}\varphi(x)\,dg_s(x)\,ds\]
And $(t,x)\mapsto C(1+x^2)$ also belongs to the set of integrable functions on $[0,t]\times \R_+$ for the measure $ \frac{e^{\int_s^th(\sigma,\cdot)\,d\sigma}}{\int_0^{+\infty}\,dn_s(y)}\mathbb T_B[n_s,n_s]\,ds$, thanks to \eqref{eq:integnt} and \eqref{est:moments2}. Then,
\[\int_0^t\int_0^{+\infty}\frac{e^{\int_s^th(\sigma,x)\,d\sigma}}{\int_0^{+\infty}\,dn_s(y)}\varphi_l(x)\,d\left(\mathbb T_B[n_s,n_s]\right)(x)\,ds\xrightarrow[l\to\infty]{}\int_0^t\int_0^{+\infty}\frac{e^{\int_s^th(\sigma,x)\,d\sigma}}{\int_0^{+\infty}\,dn_s(y)}\varphi(x)\,d\left(\mathbb T_B[n_s,n_s]\right)(x)\,ds,\]
and the equality \eqref{est:equalvarphi} then holds for any $\varphi\in C^0(\R_+)$ such that $\varphi(x)|\leq C(1+x^2)$ for $x\in\R_+$.

\medskip

To prove the uniqueness of mild solutions, we reproduce the calculation \eqref{est:calcTV}: for $\psi\in L^\infty(\R_+)\cap C_c(\mathbb R_+)$ with $\|\psi\|_{L^\infty(\R_+)}\leq 1$, if $n$, $m$ satisfying \eqref{est:equalvarphi} for any $\varphi\in C_c(\R_+)$ and $\int_0^t\,dn_t\leq \bar Ce^{\bar Ct}$, $\int_0^t\,dm_t\leq \bar Ce^{\bar Ct}$ for some $\bar C>0$ and $t\geq 0$, then
\begin{align}
&\int_0^{+\infty}\psi(x)\,dn_t(x)-\int_0^{+\infty}\psi(x)\,dm_t(x)\nonumber\\
&\quad \leq \int_0^{t}e^{\int_s^th(\sigma,x)\,dx}\bigg[\iint_{\mathbb R_+^2} \left(\int_0^{+\infty}\psi(x)\,d{\mathbb K}_B[x_1,x_2](x)\right)\frac{dn_s(x_1)\,dn_s(x_2)}{\int_0^{+\infty}\,dn_s(y) }\bigg]\,ds\nonumber\\
&\qquad - \int_0^{t}e^{\int_s^th(\sigma,x)\,dx}\bigg[\iint_{\mathbb R_+^2} \left(\int_0^{+\infty}\psi(x)\,d{\mathbb K}_B[x_1,x_2](x)\right)\frac{dm_s(x_1)\,dm_s(x_2)}{\int_0^{+\infty}\,dm_s(y) }\bigg]\,ds\nonumber\\
&\quad \leq \int_0^{t}e^{\int_s^th(\sigma,x)\,dx}\Bigg\{\iint_{\mathbb R_+^2} \left(\int_0^{+\infty}\psi(x)\,d{\mathbb K}_B[x_1,x_2](x)\right)\frac{d(n_s-m_s)(x_1)\,dn_s(x_2)}{\int_0^{+\infty}\,dn_s(y) }\nonumber\\
&\qquad +\iint_{\mathbb R_+^2} \left(\int_0^{+\infty}\psi(x)\,d{\mathbb K}_B[x_1,x_2](x)\right)\left(\int_0^{+\infty}\,d(m_s-n_s)(y)\right)\frac{dn_s(x_1)}{\int_0^{+\infty}\,dn_s(y) }\frac{dm_s(x_2)}{\int_0^{+\infty}\,dm_s(y) }\bigg]\,ds\nonumber\\
&\qquad +\iint_{\mathbb R_+^2} \left(\int_0^{+\infty}\psi(x)\,d{\mathbb K}_B[x_1,x_2](x)\right)\frac{dm_s(x_1)\,d(n_s-m_s)(x_2)}{\int_0^{+\infty}\,dm_s(y) }\Bigg\}\,ds\nonumber\\
&\quad \leq 3\int_0^t\left(e^{\bar C(t-s)}\sup_{\|\varphi\|_{L^\infty(\R_+)}\leq 1}\int_0^{+\infty} \varphi(y)\,d(n_s-m_s)(y)\right)\,ds.\label{est:uniq}
\end{align}
Then, denoting
\begin{align*}
Y(s):= \sup_{\|\varphi\|_{L^\infty(\R_+)}\leq 1}\int_0^{+\infty}\varphi(x)\,d\left(n_s-m_s\right)(x),
\end{align*}
we can consider the supremum of \eqref{est:uniq} over $\psi\in C_c(\R_+)$ with $\|\psi\|_{L^\infty(\R_+)}\leq 1$. Thanks to Lusin’s theorem (see Section 2.24 of [32]), this supremum is equal to $Y(t)$, and then
\[Y(t)\leq \int_0^{t} Ce^{Cs} e^{s-t} Y(s)\,ds.\]
A Gronwall estimate then shows $Y\equiv 0$, that is $n=m$, hence the uniqueness of mild solutions.
\end{proof}

\section*{Appendix}

\begin{proposition}\label{prop:appendix}
Let $\nu$ a Borel measure on $\R_+$. Assume that for any non-negative function $\varphi\in C_c(\R_+)$, we have
\[\int_0^{+\infty}\varphi(x)\,d\nu(x)\geq 0,\]
then $\nu$ is non-negative, that is $\nu(A)\geq 0$ for any Borel set $A$.
\end{proposition}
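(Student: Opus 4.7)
The plan is to argue by contradiction via the Jordan--Hahn decomposition of $\nu$ combined with Urysohn's lemma. The statement is only interesting if $\nu$ is interpreted as a signed (locally finite, regular) Borel measure on $\R_+$, which is the setting in which the proposition is applied in the main body of the paper, since the Riesz--Markov--Kakutani representation theorem supplies regular measures. Under that interpretation, I would decompose $\nu = \nu^+ - \nu^-$ with $\nu^+$ and $\nu^-$ mutually singular non-negative Borel measures, and fix a Hahn decomposition $\R_+ = P \sqcup N$ so that $\nu^+$ is concentrated on $P$ and $\nu^-$ on $N$. The goal then is to prove $\nu^-(\R_+)=0$.

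Suppose for contradiction that $\nu^-(\R_+) > 0$. By inner regularity of $\nu^-$ (available since $\R_+$ is locally compact, $\sigma$-compact, Hausdorff, so any locally finite Borel measure is Radon) I would select a compact set $K \subset N$ with $\nu^-(K) > 0$. Since $K$ is disjoint from $P$, automatically $\nu^+(K) = 0$. By outer regularity of $\nu^+$, for the choice $\varepsilon := \nu^-(K)/2$, there exists an open set $U \supset K$ with $\nu^+(U) < \varepsilon$.

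Applying Urysohn's lemma (valid since $\R_+$ is locally compact Hausdorff), I would produce a function $\varphi \in C_c(\R_+)$ with $0 \le \varphi \le 1$, $\varphi \equiv 1$ on $K$, and $\mathrm{supp}\,\varphi \subset U$. A direct computation then gives
\[
\int_0^{+\infty} \varphi(x)\,d\nu(x) = \int \varphi\,d\nu^+ - \int \varphi\,d\nu^- \le \nu^+(U) - \nu^-(K) < \varepsilon - \nu^-(K) = -\tfrac{1}{2}\nu^-(K) < 0,
\]
which contradicts the hypothesis. Therefore $\nu^- = 0$, and $\nu = \nu^+$ is non-negative on every Borel set.

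The main (minor) obstacle is justifying the regularity ingredients, namely inner regularity of $\nu^-$ on the compact set $K$ and outer regularity of $\nu^+$ around $K$. Both follow from the standard fact that every locally finite Borel measure on a locally compact, $\sigma$-compact, Hausdorff space is Radon, so the proof reduces to the construction of the appropriate Urysohn bump function. No delicate analytic estimate is needed beyond this.
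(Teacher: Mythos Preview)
Your proof is correct but follows a different route from the paper. The paper does not use the Hahn decomposition at all: it fixes a bounded Borel set $A\subset[0,R]$, approximates $\mathbf 1_A$ in $L^1(|\nu|)$ by a sequence $(\varphi_l)_l\subset C_c([0,R+1))$ with $0\le\varphi_l\le 1$ supplied by Lusin's theorem, and then passes to the limit via dominated convergence to get $\nu(A)=\lim_l\int\varphi_l\,d\nu\ge 0$. Your argument instead isolates the negative part $\nu^-$ through the Jordan--Hahn decomposition, and then produces a single Urysohn bump function that witnesses a strictly negative integral if $\nu^-\neq 0$. Your approach is arguably more elementary (Urysohn rather than Lusin) and gives the conclusion in one shot for all Borel sets, whereas the paper's argument treats bounded $A$ first and leaves the extension to unbounded $A$ implicit. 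Both rely on the same underlying regularity of Radon measures on $\R_+$, which you correctly flag as the only point needing justification.
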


\begin{proof}[Proof of Proposition~\ref{prop:appendix}]
Assume first that $A$ is a bounded set: $A\subset [0,R]$ for some $R\geq 0$. Let $\varphi:=1_A\in L^\infty(\R_+)$. We use Lusin's theorem (see  Section 2.24 of \cite{Rudin}): Since $\nu|_{[0,R+1)}$ is a Borel measure on $[0,R+1)$, there is a sequence $(\varphi_l)_l\in (C_c([0,R+1)))^\N$ with $0\leq \varphi_l\leq 1$ (a minor modification of the corollary from Section 2.24 of \cite{Rudin} is necessary: in that reference, $\varphi_l$ satisfies $\|\varphi_l\|_{L^\infty}\leq \|\varphi\|_{L\infty}$, for this adaptation, one should consider $\tilde \varphi:=\varphi-1/2$) such that $\varphi(x)=\lim_{l\to\infty}\varphi_l(x)$ almost everywhere for $\nu|_{[0,R+1)}$, and then, for $l\in\N$,
\begin{align*}
\int_0^{+\infty}\varphi(x) \,d\nu(x)&\geq -\left|\int_0^{+\infty}\left(\varphi(x)-\varphi_l(x)\right) \,d\nu(x)\right|+\int_0^{+\infty}\varphi_l(x) \,d\nu(x)\\
&\geq -\left|\int_0^{R}\left(\varphi(x)-\varphi_l(x)\right) \,d\nu(x)\right|,
\end{align*}
Thanks to the definition of $\varphi_l$, we have $\varphi(x)=\lim_{l\to\infty}\nu(x)$ almost everywhere for $\nu|_{[0,R+1)}$, and we have the domination $\left|\varphi(x)-\varphi_l(x)\right|\leq 2$ with $\int_0^R2\,d|\nu|(x)<\infty$, so that the dominated convergence theorem can be used to show $\int_0^{R}\left(\varphi(x)-\varphi_l(x)\right) \,d\nu(x)\to 0$ as $l\to \infty$. This proves $\nu(A)=\int_0^{+\infty}\varphi(x) \,d\nu(x)\geq 0$. 
\end{proof}

In the next lemma, we derive some estimates on the initial condition, source term and regularized transfer kernels of the truncated model \eqref{eq:truncatedpb} that are used in Section~\ref{subsec:existence}:
\begin{lemma}\label{lemma:approx}
Let $B\in \mathcal P([0,1])$,  $g\in L^\infty(\R_+, \mathbb M_2(\R_+))$, $h\in C^0(\R_+^2,\R)$ satisfying \eqref{eq:condg}, $n^{ini}\in\mathbb M_2(\R_+)$,  $\mathbb K_B$ defined as in \eqref{def:barT}. Let $\psi\in C(\R_+,\R)$ satisfying $|\psi(x)|\leq \bar C x$ for a constant $\bar C=\bar C(n^{ini},B,\bar C_A,\bar h_A)>0$ and any $x\in\R_+$. Then, for $t\geq 0$,
\begin{equation}\label{eq:ninimnini}
\int_0^{+\infty} \psi(x) \,d\left(\left(\left(\Gamma_{\varepsilon}\ast n^{ini}\right)1_{ \cdot\leq 1/\varepsilon}\right)\,dx-n^{ini}\right)(x)\xrightarrow[\varepsilon\to 0]{}0.
\end{equation}
\begin{align}
&\int_0^{+\infty} \psi(x) \,d\bigg(\left((\Gamma_\varepsilon\ast g_t)1_{ \cdot\leq 1/\varepsilon}\right)\,dx-g_t\bigg)(x)\xrightarrow[\varepsilon\to 0]{}0.\label{eq:cvg}
\end{align}
\end{lemma}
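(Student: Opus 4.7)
The plan is to unwind the convolution as an integration of a test function against the measure $n^{ini}$ (respectively $g_t$), perform the change of variables $x = y + \varepsilon z$ that exploits the explicit scaling of $\Gamma_\varepsilon$, and then conclude by the dominated convergence theorem. The main (mild) technical point will be to produce a domination valid on $\R_+$ despite the factor of $x$ coming from the bound $|\psi(x)|\leq \bar C x$.

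More precisely, using $\mathrm{supp}\,\Gamma_\varepsilon\subset(\varepsilon,2\varepsilon)$ and Fubini's theorem, I would rewrite, for any $\varepsilon\leq 1$,
\begin{align*}
\int_0^{+\infty}\psi(x)(\Gamma_\varepsilon\ast n^{ini})(x)1_{x\leq 1/\varepsilon}\,dx
&=\int_0^{+\infty}\!\!\int_1^2 \psi(y+\varepsilon z)\,\Gamma_1(z)\,1_{y+\varepsilon z\leq 1/\varepsilon}\,dz\,dn^{ini}(y),
\end{align*}
after the change of variables $u=x-y$, $u=\varepsilon z$. Since $\int_1^2\Gamma_1(z)\,dz=1$, the quantity to estimate becomes
\begin{align*}
\int_0^{+\infty}\!\!\int_1^2 \Big[\psi(y+\varepsilon z)\,1_{y+\varepsilon z\leq 1/\varepsilon}-\psi(y)\Big]\Gamma_1(z)\,dz\,dn^{ini}(y).
\end{align*}

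For each fixed $(y,z)\in\R_+\times[1,2]$, the bracketed integrand tends to $0$ as $\varepsilon\to 0$: by continuity of $\psi$, $\psi(y+\varepsilon z)\to\psi(y)$, and $1_{y+\varepsilon z\leq 1/\varepsilon}\to 1$ since $1/\varepsilon\to+\infty$ while $y+\varepsilon z$ stays bounded. For the domination, the assumption $|\psi(x)|\leq\bar C x$ yields
\begin{align*}
\Big|\psi(y+\varepsilon z)\,1_{y+\varepsilon z\leq 1/\varepsilon}-\psi(y)\Big|\,\Gamma_1(z)
\leq \bar C\big((y+\varepsilon z)+y\big)\Gamma_1(z)
\leq \bar C(2y+2)\Gamma_1(z),
\end{align*}
for $\varepsilon\leq 1$. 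Since $n^{ini}\in\mathbb M_2(\R_+)$ implies $\int_0^{+\infty}(1+y)\,dn^{ini}(y)<+\infty$, and $\Gamma_1\in L^1([1,2])$, this dominating function is integrable against $dn^{ini}(y)\otimes dz$. The dominated convergence theorem then gives \eqref{eq:ninimnini}.

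The proof of \eqref{eq:cvg} is identical: one repeats the same calculation with $n^{ini}$ replaced by the measure $g_t$. Assumption \eqref{eq:condg} guarantees $\int_0^{+\infty}(1+y)\,dg_t(y)\leq\bar C_A$, so the same dominating function works, and the conclusion follows from the dominated convergence theorem again. The only subtlety worth flagging is that the truncation by $1_{x\leq 1/\varepsilon}$ could in principle combine awkwardly with the growth of $\psi$, but since for any fixed $y$ we have $y+\varepsilon z\leq 1/\varepsilon$ as soon as $\varepsilon$ is small (depending on $y$), the indicator only plays a role in the tail, and this tail is controlled by the first-moment integrability of $n^{ini}$ and $g_t$ used in the domination.
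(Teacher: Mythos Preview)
Your proof is correct and in fact slightly cleaner than the paper's. Both arguments ultimately rest on dominated convergence, but they organize the computation differently. The paper splits the error into two pieces---a tail term $\int_{1/\varepsilon}^{+\infty}\psi(x)(\Gamma_\varepsilon\ast n^{ini})(x)\,dx$ controlled via the second moment of $n^{ini}$, and a mollification term $\int \psi\,d((\Gamma_\varepsilon\ast n^{ini})\,dx - n^{ini})$---and applies dominated convergence to the latter after writing it as $\int\big(\int(\psi(y)-\psi(x))\Gamma_\varepsilon(x-y)\,dy\big)\,dn^{ini}(x)$. In that step the paper bounds the inner integral by $2\|\psi'\|_{L^\infty}\int y\Gamma_\varepsilon(y)\,dy$, which tacitly uses a Lipschitz bound on $\psi$ not present in the lemma's hypotheses (though it holds for the specific $\psi$ actually used in Proposition~\ref{Prop:existence}).

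Your route avoids this: by performing the change of variables $x = y+\varepsilon z$ at the outset and treating the indicator $1_{y+\varepsilon z\leq 1/\varepsilon}$ and the shift $\psi(y+\varepsilon z)-\psi(y)$ in one stroke, you need only the continuity of $\psi$ and the linear growth bound $|\psi(x)|\leq \bar C x$, exactly as stated. The domination $\bar C(2y+2)\Gamma_1(z)$ is integrable against $dn^{ini}(y)\otimes dz$ by the first-moment bound implied by $n^{ini}\in\mathbb M_2(\R_+)$, so the argument goes through without the extra regularity. The same remark applies verbatim to \eqref{eq:cvg}.
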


\begin{proof}[Proof of Lemma~\ref{lemma:approx}]
\begin{align}
&\left|\int_0^{+\infty} \psi(x) \,d\left(\left(\left(\Gamma_{\varepsilon}\ast n^{ini}\right)1_{ \cdot\leq 1/\varepsilon}\right)\,dx-n^{ini}\right)(x)\right|\nonumber\\
&\quad \leq\left|\int_0^{+\infty} \psi(x) \,d\left(\left(\Gamma_{\varepsilon}\ast n^{ini}\right)\,dx-n^{ini}\right)(x)\right|+ \left|\int_{1/\varepsilon}^{+\infty} \psi(x) \left(\Gamma_{\varepsilon}\ast n^{ini}\right)(x)\,dx\right|\nonumber\\
&\quad \leq \left|\int_0^{+\infty} \left(\int_0^{+\infty} \left(\psi(y)-\psi(x)\right)\Gamma_{\varepsilon}(x-y)\,dy\right) \,dn^{ini}(x)\right|+ \bar C\int_{1/\varepsilon}^{+\infty} (1+x^2)\,d n^{ini}(x).\label{eq:est:lemnini}
\end{align}
Since $n^{ini}\in \mathbb M_2(\R_+)$,  we have $\int_0^{+\infty}(1+x^2)\,dn^{ini}(x)<\infty$, which implies $\int_{1/\varepsilon}^{+\infty} (1+x^2)\,d n^{ini}(x)\xrightarrow[]{\varepsilon\to 0}0$. Moreover,
\[\left|\int_0^{+\infty} \left(\psi(y)-\psi(x)\right)\Gamma_{\varepsilon}(x-y)\,dy\right|\leq 2 \|\psi'\|_{L^\infty(\R_+)}\int_0^{+\infty} y\Gamma_\varepsilon(y)\,dy, \]
and the right hand side of that inequality defines a function in $L^1(n^{ini})$. This domination and the convergence $\int_0^{+\infty} \left(\psi(y)-\psi(x)\right)\Gamma_{\varepsilon}(x-y)\,dy\xrightarrow[\varepsilon\to 0]{}0$ allow us to use the dominated convergence theorem to show
\[\int_0^{+\infty} \left(\int_0^{+\infty} \left(\psi(y)-\psi(x)\right)\Gamma_{\varepsilon}(x-y)\,dy\right) \,dn^{ini}(x)\xrightarrow[\varepsilon\to 0]{}0,\]
and \eqref{eq:ninimnini} is proven. This argument can be repeated to show \eqref{eq:cvg}.

\end{proof}

\section*{Acknowledgements}
This work has been supported by the Chair Modélisation Mathématique et Biodiversité of Veolia - Ecole
polytechnique - Museum national d’Histoire naturelle - Fondation X. It is also funded by the European Union
(ERC AdG SINGER, 101054787) and the ANR project  DEEV (ANR-20-CE40-0011-01). Pierre Magal has sadly passed away before the completion of this manuscript.

\end{document}